\def\C {{\mathcal C}}
\def\L {{\mathcal L }}
\def\H {{\mathcal H}}
\def\M {{\mathcal M}}
\def\A {{\mathbb A}}
\def\CC {\mathbb{C}}
\def\R {\mathbb{R}}
\def\N {\mathbb{N}}
\def\eps{\varepsilon}
\def\e{{\rm e}}
\def\d{{\rm d}}
\def\i{{i}}
\def \l {\langle}
\def \r {\rangle}
\def \and {{\qquad\text{and}\qquad}}
\newtheorem{proposition}{Proposition}[section]
\newtheorem{theorem}[proposition]{Theorem}
\newtheorem{lemma}[proposition]{Lemma}
\theoremstyle{definition}
\newtheorem{definition}[proposition]{Definition}
\newtheorem{remark}[proposition]{Remark}
\numberwithin{equation}{section}
\newtheorem*{Acknowledgments}{Acknowledgments}
\DeclareMathOperator*{\dist}{dist}
\DeclareMathOperator{\re}{Re}
\DeclareMathOperator{\im}{Im}
\renewcommand{\ker}{\mathrm{Ker}}
\newcommand{\ran}{\mathrm{Ran}}
\newcommand{\Dom}{D}
\newcommand{\St}{(S(t))_{t\ge0}}
\newcommand{\Tt}{(T(t))_{t\ge0}}
\newcommand*{\Z}{{\mathbb{Z}}}
\newcommand*{\Lin}{{\mathcal{L}}}
\newcommand*{\abs} [1]{\lvert#1\rvert}
\newcommand*{\norm}[1]{\lVert#1\rVert}
\newcommand*{\set} [1]{\{#1\}}
\newcommand*{\setm}[2]{\{\,#1\mid#2\,\}}
\newcommand*{\iprod}[2]{\langle#1,#2\rangle}
\newcommand*{\Setm}[2]{\left\{\,#1\,\middle|\,#2\,\right\}}
\newcommand*{\Lp}[1][p]{L^{#1}}
\newcommand{\pmat}[1]{\begin{pmatrix}#1\end{pmatrix}}
\newcommand{\Abs}[2][default]{\ifthenelse{\equal{#1}{default}}{\left\lvert#2\right\rvert}{\ldelim{#1}{\lvert}#2\rdelim{#1}{\rvert}}}
\newcommand{\Norm}[2][default]{\ifthenelse{\equal{#1}{default}}{\left\lVert#2\right\rVert}{\ldelim{#1}{\lVert}#2\rdelim{#1}{\rVert}}}
\newcommand*{\Iprod}[3][default]{\ifthenelse{\equal{#1}{default}}{\left\langle#2,#3\right\rangle}{\ldelim{#1}{\langle}#2,#3\rdelim{#1}{\rangle}}}
\newcommand*{\Dualpair}[3][default]{\ifthenelse{\equal{#1}{default}}{\left\langle#2,#3\right\rangle}{\ldelim{#1}{\langle}#2,#3\rdelim{#1}{\rangle}}}
\newcommand{\eq}[1]{\begin{align*}#1\end{align*}}
\newcommand{\eqn}[1]{\begin{align}#1\end{align}}
\newcommand{\citel}[2]{\cite[#2]{#1}}
\newcommand{\gs}{\sigma}
\renewcommand{\gg}{\gamma}
\newcommand{\gd}{\delta}
\newcommand{\gl}{\lambda}
\newcommand{\inv}{^{-1}}
\newcommand*{\ddb}[2][1]{\ifthenelse{\equal{#1}{1}}{\frac{d}{d#2}}{\frac{d^{#1}}{d#2^{#1}}}}
\newcommand*{\pd}[3][1]{\ifthenelse{\equal{#1}{1}}{\frac{\partial{#2}}{\partial{#3}}}{\frac{\partial^{#1}{#2}}{\partial#3^{#1}}}}
\def \no#1#2#3 {{\bf #1} (#3), #2.}
\def \eds#1#2#3 {#1, #2, #3.}
\title[Wave-heat system with Coleman--Gurtin thermal law]
{Optimal decay for a wave-heat system\\ with Coleman--Gurtin thermal law}
\author[F. Dell'Oro, L. Paunonen, D. Seifert]
{Filippo Dell'Oro, Lassi Paunonen and David Seifert}
\address[F. Dell'Oro]{Dipartimento di Matematica, Politecnico di Milano
\newline\indent
Via Bonardi 9, 20133 Milano, Italy}
\email{filippo.delloro@polimi.it}
\address[L. Paunonen]{Mathematics and Statistics, Tampere University
\newline\indent PO.\ Box 692, 33101 Tampere, Finland}
\email{lassi.paunonen@tuni.fi}
\address[D. Seifert]{School of Mathematics, Statistics and Physics, Newcastle University
\newline\indent
Newcastle upon Tyne, NE1 7RU, United Kingdom}
\email{david.seifert@ncl.ac.uk}
\thanks{L. Paunonen was supported by the Academy of Finland under grants 298182 and 310489.}
\subjclass[2010]{Primary: 35M33, 35B40, 93D15, 93D20. Secondary: 47D06, 34K30, 74D05}
\keywords{Wave-heat system, Coleman--Gurtin thermal law, regular linear systems, $C_0$-semigroup, resolvent estimate, polynomial stability}
\begin{document}

%%%%%%%%%%%%%%%%%%%%%%%%%%%%%%%%%%%%%%%%%%%%%%%%%
\begin{abstract}
We study the long-term behaviour of solutions to a one-dimensional coupled wave-heat system with Coleman--Gurtin thermal law. Our approach is based on the asymptotic theory of $C_0$-semigroups and recent results developed for coupled control systems. As our main results, we represent the system as a feedback interconnection between the wave part and the Coleman--Gurtin part and we show that the associated semigroup in the history framework of Dafermos is polynomially stable with optimal decay rate $t^{-2}$ as $t\to\infty$. In particular, we obtain a sharp estimate for the rate of energy decay of classical solutions to the problem.
\end{abstract}
%%%%%%%%%%%%%%%%%%%%%%%%%%%%%%%%%%%%%%%%%%%%%%%%%

\maketitle

\section{Introduction}

\noindent
The study of the asymptotic behaviour of solutions 
to coupled PDE systems has attracted a considerable amount of attention in the recent literature.
In this article, we focus on a one-dimensional coupled wave-heat system consisting 
of a wave equation and a Coleman--Gurtin equation.
More specifically, we use the asymptotic theory of strongly continuous semigroups combined with recent results on coupled abstract control systems to derive an optimal rational decay rate for classical solutions 
to the system
\begin{equation}
\label{WGP}
\left\{
\begin{aligned}
u_{tt}(x,t) &=u_{xx}(x,t),  &x \in (-1,0),\ t>0,\\
w_t(x,t) &= w_{xx}(x,t)+ \int_0^\infty g(s) w_{xx}(x,t-s)\, \d s,  &x \in (0,1),\ t>0.
\end{aligned}
\right.
\end{equation}
The equations are coupled, for $t>0$, through the transmission conditions
\begin{equation}
\label{WGPtransmission}
u_{t}(0,t) =w(0,t),\qquad
u_{x}(0,t) =w_{x}(0,t)+\int_0^\infty g(s) w_{x}(0,t-s) \,\d s
\end{equation}
at the interface $x=0$, and in addition we impose the Dirichlet boundary conditions
\begin{equation}
\label{WGPboundary}
u(-1,t) =w(1,t)=0.
\end{equation}
 The convolution kernel $g: [0,\infty)\to[0,\infty)$ is a convex integrable function (thus non-increasing and vanishing
at infinity) of unit total mass, taking the explicit form
$$
g(s)= \int_s^\infty \mu(r) \,\d r,\qquad s\ge0,
$$
where $\mu:(0,\infty)\to [0,\infty)$ is a non-increasing
absolutely continuous integrable function (possibly unbounded near zero).
In particular, $\mu$ is differentiable almost everywhere with $\mu'(s)\leq0$ for almost every $s>0$. 
Finally, we impose initial conditions of the form
\begin{equation*}
\label{WGPiv}
\left\{
\begin{aligned}
u(x,0) &=u_0(x),  &x \in (-1,0),\\
u_t(x,0) &=v_0(x),& x \in (-1,0),\\
w(x,0)&=w_0(x),\quad w(x,-s)=\varphi_0(x,s),\quad& x \in (0,1),\ s>0,
\end{aligned}
\right.
\end{equation*}
where $u_0,v_0,w_0,\varphi_0$ are assigned data.
In particular, $\varphi_0$ accounts for the so-called initial past history
of $w$. 

The stability analysis of coupled wave-heat systems has been the subject of intensive investigations
over the past few decades.
Their intrinsic mathematical interest apart, the main motivation for studying such systems stems from the fact that they can be viewed
as linearisations
of more complex fluid-structure models arising in fluid mechanics; see for instance \cite{avalos2007mathematical,rauch2005polynomial,MR2289863}.
In the absence of the integral term, \eqref{WGP} reduces
to the classical wave-heat system, whose asymptotic properties
have been extensively analysed in the literature; see for instance \cite{AvaLas16, AvaTri13, BatPau16, Duy07,MR4015695,MR2085542, MR2289863} 
and the references therein.
In particular, it is known that in this case the associated solution semigroup is \emph{semi-uniformly stable} in the sense that all classical solutions converge to zero at a uniform rate, and more specifically the semigroup is \emph{polynomially stable} with optimal decay rate $t^{-2}$ as $t\to\infty$. In particular, the semigroup fails to be exponentially stable. To the best of the authors' knowledge, the system in \eqref{WGP} with a non-trivial
kernel $g$ was first studied in \cite{Zha14}. In fact, the analysis in
\cite{Zha14} deals with a more general system in which the Laplacian $w_{xx}$ appearing in the second equation
is replaced by $\beta w_{xx}$ for some $\beta\geq0$.
The cases $\beta>0$ and $\beta=0$ correspond to the so-called Coleman--Gurtin \cite{MR214334}
and Gurtin--Pipkin \cite{MR1553521} models, respectively. Thus our system \eqref{WGP} corresponds to
the Coleman--Gurtin case with $\beta=1$, a choice which entails no essential loss of generality.
One of the main results of \cite{Zha14} is that if $\beta=0$ and if
 the so-called Dafermos condition
\begin{equation}
\label{conddaf}
\mu'(s) + \delta\mu(s)\leq0
\end{equation}
holds for some $\delta>0$ and almost every $s>0$, then the semigroup associated with the
wave-Gurtin--Pipkin
system in the history
space framework of Dafermos \cite{Daf70}
is exponentially stable. Since the Gurtin--Pipkin dissipation given solely by
the convolution term $\int_0^\infty g(s) w_{xx}(t-s)\, \d s$ is weaker than the dissipation provided
by the Laplacian $w_{xx}$, this result serves to illustrate that the classical wave-heat system
fails to be exponentially stable on account of \emph{overdamping}.
It is a reasonable guess, therefore,
that the wave-Coleman--Gurtin system \eqref{WGP}, too, fails to be exponentially stable, and this has been confirmed in
\cite{Zha14}, at least in the special case where $g$ is an exponential function.

In the present paper, we complete the
analysis begun in \cite{Zha14} by finding the optimal (semi-uniform)
decay rate of the semigroup $\St$ associated with \eqref{WGP} in the history
space framework of Dafermos. More precisely, assuming the condition
\begin{equation}
\label{condmu}
\mu(t+s)\leq C \e^{-\delta t} \mu(s)
\end{equation}
for some $C\geq1$ and $\delta>0$ and for every $t\geq0$ and $s>0$, we show in Theorem~\ref{thm:decay} and Proposition~\ref{prp:opt} that $\St$ is
polynomially stable with optimal decay rate $t^{-2}$ as $t\to\infty$.
Observe that this decay rate coincides with that of the classical wave-heat system.
Note also that \eqref{condmu} is weaker than \eqref{conddaf}. For instance, 
condition \eqref{conddaf}, in contrast to \eqref{condmu}, does not allow flat zones or horizontal inflection points; see for instance \cite{MR2294601, MR2215885}.
Our approach consists in writing the system in \eqref{WGP}--\eqref{WGPtransmission} as
a \emph{feedback interconnection} between the wave part and the Coleman--Gurtin part.
Based on this decomposition we show that the infinitesimal generator $\A$ of $\St$
has a special block operator structure
which can be used to derive a sharp resolvent estimate; see Theorems~\ref{thm:Resolvent} and \ref{stima basso}. This resolvent estimate, combined with the
asymptotic theory of $C_0$-semigroups, finally leads to the desired decay estimates.
The general decomposition approach used in this paper extends 
to the case where the wave part in~\eqref{WGP} has spatially varying parameters, 
and also to more complex systems, such as chains consisting of several coupled 
wave and Coleman--Gurtin-type equations.
In the latter case the decoupling approach reduces the study of the more complicated system 
to the analysis of its simpler constituent parts,
 and in this way facilitates efficient treatment of chains of coupled equations.
The same approach can potentially also be employed in the stability analysis of coupled PDEs on networks.

\subsection*{Notation}
We introduce the (complex) Hilbert spaces
\begin{align*}
H^1_l(-1,0) =\{\varphi \in H^1(-1,0) : \varphi(-1)=0 \}, \quad\,\,
H^1_r(0,1) =\{\varphi \in H^1(0,1) : \varphi(1)=0 \},
\end{align*}
with the inner products
$\l \varphi ,\psi\r_{H^1_l(-1,0)} = \l \varphi' ,\psi' \r_{L^2(-1,0)}$ and
$\l \varphi ,\psi\r_{H^1_r(0,1)}= \l \varphi' ,\psi' \r_{L^2(0,1)}$.
We also introduce the so-called memory space $\M = L^2_\mu(0,\infty; H^1_r(0,1))$
of  $H^1_r(0,1)$-valued functions on $(0,\infty)$ which are square-integrable with respect to the measure $\mu(s)\d s$,
endowed with the natural inner product
$$\langle\eta,\xi\rangle_\M=\int_0^\infty \mu(s)\langle\eta(s),\xi(s)\rangle_{H^1_r(0,1)}\, \d s.
$$
The state space of our problem will be
$$
\H = H^1_l(-1,0) \times L^2(-1,0) \times L^2(0,1) \times \M,
$$
with the natural inner product
$$
\l (u,v,w,\eta) , (\tilde u, \tilde v, \tilde w ,\tilde \eta) \r_\H =  \l u, \tilde u \r_{H^1_l(-1,0)} +
\l v, \tilde v\r_{L^2(-1,0)} +\l w, \tilde w\r_{L^2(0,1)} + \l \eta,\tilde \eta \r_\M.
$$

Throughout the paper, the Young, H\"older and Poincar\'e inequalities
will be used without explicit mention.
Square roots of complex numbers are defined with a branch cut along $(-\infty,0]$. In particular,
$\re\sqrt{\lambda}\ge0$ for all $\lambda\in\CC$, with strict inequality for $\lambda\not\in(-\infty,0]$.
We denote the open right and left half-planes in the complex plane by $\CC_\pm=\{\lambda\in\CC:\re\lambda\gtrless0\}$. Given (complex) Banach spaces $X$ and $Y$ we write $\L(X,Y)$ for the space of bounded linear operators from $X$ to $Y$, and we write $\L(X)$ instead of $\L(X,X)$. If $A$ is a closed linear operator acting on a Banach or Hilbert space, we denote its spectrum by $\gs(A)$ and its resolvent set by $\rho(A)$. 
We frequently consider the domain $D(A)$ of $A$ as being endowed with the graph norm $\norm{x}_{A}=(\norm{x}^2+\norm{Ax}^2)^{1/2}$. In particular, $\Dom(A)$ is a Hilbert space whenever $X$ is.
Moreover, for $\lambda\in\rho(A)$ we write $R(\lambda,A)$ for the resolvent operator $(\lambda-A)^{-1}$. Finally, we use conventional asymptotic notation, including `big O' and `little o', and we occasionally write $p\lesssim q$ to indicate that $p\le Cq$ for some (implicit) constant $C>0$.

%%%%%%%%%%%%%%%%%%%%%%%%%%%%%%%%%%%%%%%%%%%%%%%%%
\section{The System Operator and Wellposedness}
\label{opa}

\noindent
We begin by introducing the infinitesimal generator of the right-translation semigroup on $\M$, that is,
the linear operator
$$T\eta=-\eta_s,\qquad D(T)=\big\{\eta\in{\M}:\eta_s\in\M,\
\lim_{s\to 0}\|\eta(s)\|_{H^1_r(0,1)}=0\big\},$$
where $\eta_s$ denotes the (weak) derivative of $\eta$ with respect to the variable $s>0$.
Integration by parts with respect to $s$ together with a limiting argument can be used to show (as in \cite{GraPat02}) that
\begin{equation}
\label{Teta}
\re \l T\eta,\eta\r_\M
= \frac12 \int_0^\infty \mu'(s) \|\eta(s)\|_{H^1_r(0,1)}^2\, \d s\leq 0,\qquad \eta \in D(T).
\end{equation}
With a view to rewriting \eqref{WGP}--\eqref{WGPtransmission} in the \emph{history
space framework} of Dafermos~\cite{Daf70}, we consider for each $t>0$ the auxiliary function
$$
\eta^t(x,s)= \int_0^s w(x,t-\sigma) \,\d \sigma ,\qquad x \in (0,1),\ s>0,
$$
accounting for the integrated past history of $w$.
We further introduce, still in the spirit of \cite{Zha14}, the function
\begin{equation}
\label{PHIDEF}
\phi(x,t) = w(x,t) + \int_0^\infty \mu(s) \eta^t(x,s) \,\d s,\qquad x\in(0,1),\ t>0.
\end{equation}
Integrating by parts (formally) we obtain the identity
$$
w(x,t)+\int_0^\infty g(s) w(x,t-s) \,\d s =\phi(x,t),\qquad x\in(0,1),\ t>0.
$$
The system \eqref{WGP}--\eqref{WGPtransmission} can now be rewritten as
\begin{equation}
\label{WGPdaf}
\left\{
\begin{aligned}
u_{tt}(x,t) &=u_{xx}(x,t), & x \in (-1,0),\ t>0,\\
w_t(x,t) &=  \phi_{xx}(x,t),& x \in (0,1),\ t>0,\\
\eta^t_t(x,s)&= T \eta^t(x,s) + w(x,t),\quad & x \in (0,1),\ s,t>0,
\end{aligned}
\right.
\end{equation}
with the boundary conditions \eqref{WGPboundary} and the coupling conditions
\eqn{
\label{WGPdafBC}
u_{t}(0,t) =w(0,t),\qquad
u_{x}(0,t) = \phi_{x}(0,t)
}
for $t>0$.
By introducing the state vector $z(t)= (u(\cdot,t),v(\cdot,t),w(\cdot,t),\eta^t(\cdot,\cdot))^T$,
we may convert the above problem into an abstract Cauchy problem in the space $\H$, namely
\begin{equation}
\label{eq:ACP}
\left\{
\begin{aligned}
\dot{z}(t) &= \A z(t),\qquad t\ge0,\\
z(0)&=z_0,
\end{aligned}\right.
\end{equation}
where $\A: D(\A) \subset \H \to \H$ is the linear operator
$$
\A
\left(\begin{matrix}
u\\
v\\
w\\
\eta
\end{matrix}
\right)
=\left(
\begin{matrix}
v\\
u''\\
\phi''\\
T\eta + w
\end{matrix}
\right),\qquad
D(\A) =\Setm{ \left(\begin{matrix}
u\\
v\\
w\\
\eta
\end{matrix}
\right) \in \H}{
\begin{matrix}
u \in H^2(-1,0)\\
v \in H^1_l(-1,0)\\
w \in H^1_r(0,1)\\
\eta \in \Dom(T)\\
\phi \in H^2(0,1)\\
u'(0)= \phi'(0)\\
v(0)=w(0)
\end{matrix}
},
$$
and  $z_0=(u_0,v_0,w_0,\eta^0)^T\in\H$ with  $\eta^0(x,s) = \int_0^s \varphi_0(x,\sigma)\, \d \sigma$ for $x\in(0,1)$ and $s>0$.
Theorem~\ref{thm:SG} below shows that $\A$ generates a contraction semigroup on $\H$. The proof is based on the special block operator structure of $\A$ introduced in the same result. This structure of $\A$ also plays  a central role later in Section~\ref{sec:ResolventEstimates}, where we use it together with the results in~\cite{Pau19} in order to derive an optimal resolvent estimate for $\A$ on $i\R$.

To state the theorem, we first define some notation related to extrapolation spaces for semigroup generators.
If $A: \Dom(A)\subset Z\to Z$ generates a $C_0$-semigroup $\Tt$ on a Hilbert space $Z$, then $\Dom(A)$ is a Hilbert space with respect to the graph norm of $A$. We define $Z_{-1}$ to be the completion of the space $Z$ with respect to the norm $\norm{z}_{Z_{-1}}=\norm{(\gl_0-A)\inv z}_Z$ with $\gl_0\in\rho(A)$ (the space $Z_{-1}$ is independent of the choice of $\gl_0\in\rho(A)$). The operator $A: \Dom(A)\subset Z\to Z$ extends to $A_{-1}: \Dom(A_{-1})\subset Z_{-1}\to Z_{-1}$ with domain $\Dom(A_{-1})=Z$; see for instance \citel{EngNag00book}{Sec.~II.5}. The operator $A_{-1}$ generates a  $C_0$-semigroup $(T_{-1}(t))_{t\ge0}$ on the Banach space $Z_{-1}$ such that for every $t\geq 0$ the operator $T_{-1}(t)\in \Lin(Z_{-1})$ is an extension of $T(t)\in \Lin(Z)$.
Finally, for an operator $B\in \Lin(\CC^m,Z_{-1})$ we let
 $$Z^B= \Dom(A)+ \ran(R(\gl_0,A_{-1})B)$$  
 for $\gl_0\in\rho(A)$ (the space $Z^B\subset Z$ is again is independent of the choice of $\gl_0\in\rho(A)$).

\begin{definition}[{\textup{\citel{TucWei14}{Def.~5.1}}}]
Assume that $A: \Dom(A)\subset Z\to Z$ generates a $C_0$-semigroup on $Z$ and that $C\in \Lin(\Dom(A),\CC^m)$.
The \textit{$\Lambda$-extension} of $C$ is defined as the operator
\eq{
C_\Lambda z = \lim_{\substack{\gl\to\infty\\\gl>0}} \gl CR(\gl,A) z
}
and the domain $\Dom(C_\Lambda)$ consists of those $z\in Z$ for which the limit exists.
\end{definition}

\begin{theorem}
  \label{thm:SG}
Let $Z_1=H_l^1(-1,0)\times L^2(-1,0)$ and $Z_2 = L^2(0,1)\times \M$.
There exist semigroup generators $A_k: \Dom(A_k)\subset Z_k\to Z_k$
 and operators
 $B_k\in \Lin(\CC,Z_{k,-1})$, $C_k\in \Lin(\Dom(A_k),\CC)$ for $k=1,2$, and a constant $D_1>0$ such that
\begin{subequations}
\label{eq:Ablockstruct}
\eqn{
  \A &= \pmat{A_{1,-1}&B_1C_{2\Lambda}\\-B_2C_{1\Lambda}&A_{2,-1}-B_2D_1C_{2\Lambda}},\\
  \Dom(\A)&= \Setm{\pmat{z_1\\z_2}\in Z_1^{B_1}\times Z_2^{B_2}}{ \begin{matrix}
      A_{1,-1}z_1+B_1 C_{2\Lambda}z_2\in Z_1\\
      A_{2,-1}z_2
      -B_2 (C_{1\Lambda}z_1+D_1 C_{2\Lambda}z_2) \in Z_2
\end{matrix}
}.
}
\end{subequations}
Moreover, the operator $\A$ generates a contraction semigroup $\St$ on $\H = Z_1\times Z_2$.
\end{theorem}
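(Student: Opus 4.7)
The plan is to construct the tuple $(A_1,B_1,C_1)$, $(A_2,B_2,C_2,D_1)$ explicitly, identify $\A$ as the closed-loop operator of the output-feedback interconnection of these two subsystems, and then prove semigroup generation via Lumer--Phillips.

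First I would define the wave subsystem on $Z_1=H^1_l(-1,0)\times L^2(-1,0)$ by taking $A_1$ to be the (skew-adjoint) wave operator
$$A_1\pmat{u\\v}=\pmat{v\\u''},\qquad D(A_1)=\Setm{\pmat{u\\v}\in Z_1}{u\in H^2(-1,0),\ v\in H^1_l(-1,0),\ u'(0)=0},$$
which is classically known to generate a unitary group on $Z_1$; the output is the velocity trace $C_1(u,v)=v(0)$ and $B_1\in\Lin(\CC,Z_{1,-1})$ is the extrapolated Neumann boundary control at $x=0$ obtained by the usual harmonic-lifting argument. Next I would define the Coleman--Gurtin subsystem on $Z_2=L^2(0,1)\times\M$ by
$$A_2\pmat{w\\\eta}=\pmat{\phi''\\T\eta+w}$$
with homogeneous Dirichlet conditions $w(0)=w(1)=0$ built into the domain; the output is the flux $C_2(w,\eta)=\phi'(0)$ and $B_2\in\Lin(\CC,Z_{2,-1})$ is the extrapolated Dirichlet boundary control at $x=0$. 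The direct feedthrough constant $D_1>0$ is identified explicitly by computing the stationary response: when a constant Dirichlet input $w(0)=g$ is applied, the Dirichlet lift together with the memory term forces an immediate contribution $D_1g$ in the flux output $\phi'(0)$, and $D_1$ is read off the corresponding lifting formula (it is independent of the state).

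Second I would verify that $(A_k,B_k,C_k)$ for $k=1,2$ are regular linear systems in the sense of Tucsnak--Weiss, so that the $\Lambda$-extensions $C_{1\Lambda}, C_{2\Lambda}$ make sense on $Z_1^{B_1}$, $Z_2^{B_2}$ respectively. The coupling conditions $v(0)=w(0)$ and $u'(0)=\phi'(0)$ in \eqref{WGPdafBC} are then exactly the output feedback equations $u_1=y_2$, $u_2=y_1$ of the interconnected system (after absorbing the feedthrough $D_1$ into $C_{2\Lambda}$), and expanding the closed-loop generator obtained from the standard feedback formula reproduces the block form~\eqref{eq:Ablockstruct}. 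Matching the compatibility conditions in the characterisation of $D(\A)$ from~\eqref{eq:Ablockstruct} with the conditions in the original definition of $D(\A)$ then amounts to recognising that membership in $Z_k^{B_k}$ encodes exactly the requirement $u\in H^2(-1,0)$, $\phi\in H^2(0,1)$, together with the boundary matching.

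Third, for semigroup generation I would apply Lumer--Phillips directly. Dissipativity follows from an integration by parts: for $z=(u,v,w,\eta)\in D(\A)$ the boundary terms at $x=0$ produce exactly $\re(u'(0)\overline{v(0)})-\re(\phi'(0)\overline{w(0)})$, which cancels under the coupling, while the memory cross-terms $\l w,\eta\r_\M$ and $-\overline{\l w,\eta\r_\M}$ cancel in real part; combined with \eqref{Teta} this gives
$$\re\l\A z,z\r_\H \;=\; \re\l T\eta,\eta\r_\M-\|w\|_{H^1_r(0,1)}^2\;\le\;0.$$
Maximality, i.e.\ $\ran(I-\A)=\H$, is obtained by solving the elliptic boundary value problem $(I-\A)z=F$ for arbitrary $F\in\H$: one first inverts $I-T$ on $\M$ (using the resolvent calculus for the translation semigroup as in~\cite{GraPat02}) to eliminate $\eta$ in terms of $w$ and a datum coming from $F$, thereby reducing the system to a coupled elliptic problem in $(u,v,w)$ on $(-1,0)\cup(0,1)$ with Robin-type transmission at $x=0$; existence and uniqueness of a variational solution then follow from a standard coercive bilinear form on $H^1_l(-1,0)\times H^1_r(0,1)$ via Lax--Milgram.

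The main obstacle I anticipate is the second step: verifying that the two subsystems are genuinely regular (so that the feedback interconnection is well-defined and the $\Lambda$-extensions recover the expected traces $v(0)$ and $\phi'(0)$), and pinning down the precise value of $D_1$ and the algebraic book-keeping needed to match the closed-loop domain with the original $D(\A)$. The dissipativity calculation and the maximality argument are by comparison routine.
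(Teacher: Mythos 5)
Your overall strategy — recast the wave and Coleman--Gurtin equations as impedance passive boundary nodes/regular tuples, realise the coupling as an output feedback interconnection, then invoke Lumer--Phillips — is exactly the route taken in the paper. The dissipativity computation and the maximality argument in your third step are essentially sound and match what the paper does. However, the specific input/output assignment you chose for the two subsystems contains a genuine gap that invalidates the second step.

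You take Neumann control for the wave part (input $u'(0)$, domain condition $u'(0)=0$, output $v(0)$) and Dirichlet control for the Coleman--Gurtin part (input $w(0)$, domain condition $w(0)=0$, output $\phi'(0)$). The paper does the opposite: the wave part has $G_1(u,v)=v(0)$ as input (so $v(0)=0$ on $D(A_1)$) and $K_1(u,v)=u'(0)$ as output, while the Coleman--Gurtin part has $G_2(w,\eta)=-\phi'(0)$ as input (so $\phi'(0)=0$ on $D(A_2)$) and $K_2(w,\eta)=w(0)$ as output. For the wave part both choices give a regular tuple, since the transfer functions $\coth(\gl)$ and $\tanh(\gl)$ are both bounded on vertical lines in $\CC_+$ and converge to $1$ as $\gl\to\infty$. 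But your choice for the Coleman--Gurtin part is fatal: the Dirichlet-to-Neumann map there has transfer function
$$-\sqrt{\gl\ell(\gl)}\,\coth\sqrt{\gl/\ell(\gl)},$$
which grows like $\sqrt{\gl}$. It is therefore not bounded on any vertical line and has no finite limit as $\gl\to\infty$, so the boundary node is not a well-posed regular linear system in the sense of Definition~\ref{def:RLS}, and Lemma~\ref{lem:BCStoRLS} cannot be applied. In particular, there is no finite feedthrough $D_1$ for your Coleman--Gurtin subsystem; the claim that $D_1$ can be ``read off the lifting formula'' fails. (Note also that in the theorem's block form $D_1$ arises as the feedthrough of the \emph{wave} part, appearing via $K_1=C_{1\Lambda}+D_1G_1$, not of the Coleman--Gurtin part as your proposal assumes.) Only the pairing chosen in the paper — velocity-in/flux-out for the wave, flux-in/temperature-out for the Coleman--Gurtin part — makes the composite into a feedback of two genuinely regular, impedance passive tuples, with transfer functions $P_1(\gl)=\coth(\gl)\to D_1=1$ and $P_2(\gl)=\tanh\sqrt{\gl/\ell(\gl)}/\sqrt{\gl\ell(\gl)}\to D_2=0$.

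There is also a secondary sign issue: with $G_2(w,\eta)=w(0)$, $K_2(w,\eta)=\phi'(0)$, the boundary node satisfies $\re\iprod{L_2 z}{z}\le -\re\iprod{G_2z}{K_2z}$ rather than $\le\re\iprod{G_2z}{K_2z}$, so as stated it is not impedance passive; one would need to flip a sign. This is easily repairable, but the unbounded transfer function above is not — it breaks the regularity framework your proof relies on and must be fixed by swapping the input and output operators for both subsystems, as in the paper.
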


The proof of Theorem~\ref{thm:SG} is a direct consequence of Proposition~\ref{prp:Ablockstructure} at the end of this section.
Our approach in the proof of this result and subsequent ones does not require us to derive explicit expressions for the operators $B_1$, $B_2$ or the $\Lambda$-extensions $C_{1\Lambda}$ and $C_{2\Lambda}$ of the operators $C_1$ and $C_2$. 
In particular, explicit knowledge of these operators is not required for the purposes of proving well-posedness or deriving resolvent estimates for $\A$ by means of the results in~\cite{Pau19}.

\begin{remark}
The fact that $\A$ generates a contraction semigroup was already proved in~\cite{Zha14} under slightly stronger
assumptions on the memory kernel; cf.\ hypotheses (H1)-(H2) in~\cite{Zha14}. We also stress that \eqref{condmu}
is not needed in the semigroup generation part, but only in the resolvent estimates carried out in the next section. 
\end{remark}

Before proceeding to prove Theorem~\ref{thm:SG}, we shall motivate the block operator structure of $\A$ based on the properties of
the coupled PDE system~\eqref{WGPdaf} with the boundary conditions~\eqref{WGPboundary} and the coupling conditions~\eqref{WGPdafBC}.
The block structure in~\eqref{eq:Ablockstruct} arises from the decomposition of
the full coupled PDE system into two natural subparts: a wave equation and a Coleman--Gurtin-type diffusion equation.
Indeed,
if we introduce two auxiliary functions $U_1$ and $Y_1$,
the `wave part' of the coupled PDE system  is given by
\eqn{
  \label{eq:WavePDE}
  \left\{
  \begin{aligned}
    u_{tt}(x,t) &=u_{xx}(x,t), & x\in(-1,0),\ t>0,\\
    u(-1,t)&=0,\quad    u_t(0,t)=U_1(t), \quad  Y_1(t) = u_x(0,t),& t>0,\\
u(x,0)&=u_0(x), \quad u_t(x,0)=v_0(x), \quad& x\in (-1,0).
  \end{aligned}
  \right.
}
Thus, for $t>0$, the value for $u_t(\cdot,t)$ at $x=0$ is given by $U_1(t)$, while $Y_1(t)$ is determined by the value of $u_x(\cdot,t)$ at $x=0$.

Introducing two further auxiliary functions $U_2$ and $Y_2$, the remaining `Coleman--Gurtin part' is given by
\eqn{
  \label{eq:CGPDE}
\left\{
\begin{aligned}
    w_t(x,t) &=  \phi_{xx}(x,t), & x\in(0,1),\ t>0,\\
    \eta^t_t(x,s)&= T \eta^t(x,s) + w(x,t),& x\in(0,1),\ s,t>0,\\
    -\phi_x(0,t)&=U_2(t), \quad     Y_2(t) = w(0,t),\quad w(1,t)=0,&t>0,\\
w(x,0)&=w_0(x), \quad \eta^0(x,s) = \textstyle{\int_0^s \varphi_0(x,\gs)\,\d\gs}  , &\qquad x\in (0,1),\ s>0.
  \end{aligned}
  \right.
}
For $t>0$, the value of $\phi_x(\cdot,t)$ at $x=0$ is determined
by $U_2(t)$, and $Y_2(t)$ is determined by the value of $w(\cdot,t)$ at $x=0$.

The PDE models~\eqref{eq:WavePDE} and~\eqref{eq:CGPDE} become equivalent to the coupled PDE system~\eqref{WGPdaf} once we require
that for all $t> 0$ the auxiliary functions $U_1(t)$, $U_2(t)$, $Y_1(t)$, and $Y_2(t)$,
satisfy the identities
\eqn{
\label{eq:Feedback}\left\{
\begin{aligned}
  U_1(t)&=Y_2(t)\\
  U_2(t)&=-Y_1(t)
\end{aligned}
\right.
 \qquad \iff \qquad
\left\{
\begin{aligned}
u_{t}(0,t) &=w(0,t)\\
u_{x}(0,t) &= \phi_{x}(0,t),
\end{aligned}
\right.
}
which are precisely the coupling conditions~\eqref{WGPdafBC}.
The block operator structure~\eqref{eq:Ablockstruct} follows this decomposition of the coupled PDE into two parts.
In particular, the
operators $(A_1,B_1,C_1,D_1)$ are related to the wave part~\eqref{eq:WavePDE} and $(A_2,B_2,C_2)$ are related to the Coleman--Gurtin part~\eqref{eq:CGPDE}.
This decomposition
is moreover closely connected to
mathematical systems theory, where  $U_1$ and $U_2$ would be interpreted as the \emph{inputs} of the PDE models~\eqref{eq:WavePDE} and~\eqref{eq:CGPDE}, respectively, and $Y_1$ and $Y_2$ would define the \emph{outputs} of the two systems~\cite{CurZwa95book,Sta02,MalSta06}.
In the terminology of systems theory, the coupling conditions~\eqref{eq:Feedback} on the inputs $U_1(t)$ and $U_2(t)$ and the outputs $Y_1(t)$ and $Y_2(t)$ in~\eqref{eq:Feedback} define a \emph{feedback interconnection} between the wave part and the Coleman--Gurtin part.

In the remaining part of this section, we shall use the results from infinite-dimensional systems theory in~\cite{Sta02,MalSta06,Wei94b} to prove the block operator representation~\eqref{eq:Ablockstruct} of $\A$.

\subsection{Background on regular tuples and boundary nodes}
\label{backgroud}

The operators appearing in~\eqref{eq:Ablockstruct} form ``regular tuples'' 
in the sense of Definition~\ref{def:RLS} below. 
Such operators are closely related to the theory 
\textit{regular linear systems}~\cite{Wei94b}, \citel{TucWei14}{Sec.~5}.

\begin{definition}
Assume that $A$ generates a $C_0$-semigroup $\Tt$ on a Hilbert space $Z$.
An operator $B\in \Lin(\CC^m,Z_{-1})$
 is  \textit{admissible}~\citel{TucWei14}{Rem.~3.3} with respect to $\Tt$ if there exists $\tau>0$ such that
\eq{
\int_0^\tau T_{-1}(t)Bu(t)\,\d t\in Z, \qquad  u\in \Lp[2](0,\tau;\CC^m).
}
Correspondingly, an operator
  $C\in \Lin(\Dom(A),\CC^m)$ is 
 \textit{admissible}~\citel{TucWei14}{Rem.~3.4} with respect to $\Tt$ if there exist $\tau,\kappa>0$ such that
\eq{
\int_0^\tau\norm{CT(t)z}_{\CC^m}^2\,\d t\leq \kappa \norm{z}_Z^2, \qquad z\in \Dom(A).
}
\end{definition}

\begin{definition}
\label{def:RLS}
Assume that $A: \Dom(A)\subset Z\to Z$ generates a $C_0$-semigroup $\Tt$ on a Hilbert space $Z$ and that $B\in \Lin(\CC^m,Z_{-1})$ and $C\in \Lin(D(A),\CC^m)$ are admissible with respect to $\Tt$.
Then the tuple $(A,B,C,D)$ is said to be \emph{regular} if $D\in\CC^{m\times m}$, 
 $\ran( R(\gl,A_{-1}) B)\subset \Dom(C_\Lambda)$ for some (or, equivalently, all) $\gl\in\rho(A)$ and
 $$\sup_{\re\gl\geq \gs} \norm{C_\Lambda R(\gl,A_{-1})B}_{\CC^m}<\infty$$ for some $\gs\geq 0$. The \emph{transfer function} $P$ of the regular tuple $(A,B,C,D)$ is defined by
\eq{
P(\gl) = C_\Lambda R(\gl,A_{-1})B + D, \qquad \gl\in\rho(A).
}
The regular tuple $(A,B,C,D)$ is called \textit{impedance passive} if
\eqn{
\label{eq:RLSpassive}
\re \iprod{A_{-1}z+BU}{z}_Z\leq \re \iprod{C_\Lambda z+DU}{U}_{\CC^m}
}
for all $U\in \CC^m$ and $z\in Z^B$  satisfying $A_{-1}z+BU\in Z$.
\end{definition}

Choosing $z\in \Dom(A)$ and $U=0\in\CC^m$ in~\eqref{eq:RLSpassive} shows
that the semigroup generated by $A$ in an impedance passive regular tuple $(A,B,C,D)$ is contractive.

Our aim is to relate the wave part~\eqref{eq:WavePDE} and 
the Coleman--Gurtin part~\eqref{eq:CGPDE} of our coupled PDE system to regular tuples $(A_1,B_1,C_1,D_1)$ and $(A_2,B_2,C_2,D_2)$, respectively.
We shall do this by first formulating both of these PDEs
as
\emph{abstract boundary control systems}~\cite{CheMor03, MalSta06, Sal87a} of the form
\eqn{
\label{eq:BCSgen}
	\left\{\begin{aligned}
	  \dot{z}(t) &= L z(t),  \quad&t\ge0,\\
	  G z(t) &= U(t), &t\ge0,\\
	  Y(t) &= K z(t), &t\ge0,\\
	  z(0)&=z_0
	\end{aligned}\right.
}
on a Hilbert space $Z$ with $L:\Dom(L)\subset Z\to Z$ and $K,G: \Dom(L)\subset Z\to \CC^m$. As is shown in Lemma~\ref{lem:BCStoRLS} below,
under suitable assumptions
the operators $A$, $B$, $C$ and $D$ of the regular tuples exist and can be expressed in terms of $L$, $G$, and $K$.
The benefit of using the  framework of abstract boundary control systems 
is that~\eqref{eq:BCSgen} has a form which closely resembles both the wave part~\eqref{eq:WavePDE} and the Coleman--Gurtin part~\eqref{eq:CGPDE} 
with suitable choices of a differential operator $L: \Dom(L)\subset Z\to Z$ and boundary trace operators $G,K: \Dom(L)\subset Z\to \CC$.
We call~\eqref{eq:BCSgen} a \emph{boundary control system} if the operator $L$, $G$ and $K$ 
form a \emph{boundary node} defined as below.

\begin{definition}
\label{def:BCS}
The triple $(G,L,K)$ in~\eqref{eq:BCSgen} is said to be an \emph{(internally well-posed) boundary node} 
on the Hilbert spaces $(\CC^m,Z,\CC^m)$ (or sometimes, for short, on $Z$)  
if the linear operators $L:\Dom(L)\subset Z\to Z$ and $G,K: \Dom(L)\subset Z\to \CC^m$ 
have the following properties:
\begin{itemize}
\item[\textup{(a)}] The restriction $L\vert_{\ker(G)}: \ker(G)\subset Z\to Z$ generates a $C_0$-semigroup on $Z$;
\item[\textup{(b)}] $G,K\in \Lin(\Dom(L),\CC^m)$;
\item[\textup{(c)}] $\ran(G)=\CC^m$.
\end{itemize}
The boundary node is \emph{impedance passive} if
$$\re \iprod{Lz}{z}_Z\leq \re \iprod{Gz}{Kz}_{\CC^m}, \qquad z\in \Dom(L).$$
The \emph{transfer function} $P:\CC_+\to \CC^{m\times m}$ of an impedance 
passive boundary node of the form~\eqref{eq:BCSgen}
is defined so that, for $\gl\in\CC_+$ and $U\in \CC^m$, $$P(\gl)U = Kz,$$ 
where $ z\in \Dom(L)$ satisfies $(\gl-L)z=0$ and $Gz=U$.
\end{definition}

\begin{remark}
Conditions (a) and (b) in Definition~\ref{def:BCS} imply that $\ker(G)$ is a complete finite-codimen\-sional subspace of $\Dom(L)$ (equipped with the graph norm of $L$).
This in particular implies that $\Dom(L)$ is a Hilbert space or, equivalently, that $L$ is a closed operator.
%Conditions (b) and (c) in Definition~\ref{def:BCS} imply that $\ker(G)$ is a closed finite-codimen\-sional subspace of $\Dom(L)$ (equipped with the graph norm of $L$).
%In particular, $\Dom(L)$ is a Hilbert space or, equivalently, $L$ is a closed operator.
Moreover, $D(L)$ is densely and continuously embedded in $Z$.
\end{remark}

\begin{remark}
In defining the transfer function of a boundary node, we do not distinguish between $P$ in Definition~\ref{def:BCS} and its analytic extensions to domains containing $\CC_+$.
The existence and uniqueness of the solution $z\in \Dom(L)$ of the `abstract boundary value problem' $(\gl-L)z=0$ and $Gz=U$ for any $U\in\CC^m$ and $\gl\in\CC_+$  follow from~\citel{CheMor03}{Thm.~2.9}.
\end{remark}

The next lemma collects results from~\cite{MalSta06,Sta02,TucWei14,Wei94b} to show 
how an impedance passive boundary node $(G,L,K)$ on a Hilbert space $Z$ gives rise to a
regular tuple $(A,B,C,D)$ on the same space.

\begin{lemma}
\label{lem:BCStoRLS}
Let $(G,L,K)$ be an impedance passive boundary node on the Hilbert spaces $(\CC^m,Z,\CC^m)$.
Assume that the transfer function $P$ of the boundary node satisfies
\eq{
\sup_{s\in\R}\,\norm{P(\gs+is)}<\infty
}
for some $\gs\geq 0$ and that $P(\lambda)$ converges to a limit as $\lambda\to\infty$ through the positive reals.
Then there exists an impedance passive regular tuple $(A,B,C,D)$ on $Z$ such that
\eq{
A&=L\vert_{\ker(G)}, \\
Lz&=A_{-1}z+BGz, \\
Kz &= C_\Lambda z +DGz,\\
D&=\lim_{\substack{\gl\to \infty\\\gl>0}} P(\gl)\in \CC^{m\times m}
}
for $z\in \Dom(L)$. Furthermore, $Z^B=\Dom(L)$, $\ran(B)\cap Z=\set{0}$ and $P$
coincides with the transfer function of the regular tuple $(A,B,C,D)$ on $\rho(A)\cap \CC_+$.
\end{lemma}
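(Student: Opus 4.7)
The plan is to assemble $A$, $B$, $C$, $D$ directly from the boundary node data $(L,G,K)$ together with the feedthrough datum supplied by $P$, and then invoke the characterisations of regular tuples from~\cite{Wei94b, MalSta06, Sta02}. The three defining conditions of a boundary node contain essentially all the information required to build a regular impedance passive tuple, while the two transfer function hypotheses supply precisely the regularity and the value of $D$.

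First I would set $A:=L\vert_{\ker(G)}$, which generates a $C_0$-semigroup on $Z$ by condition (a) of Definition~\ref{def:BCS}. For every $\gl\in\rho(A)$, the surjectivity of $G$ together with the bijectivity of $\gl-A:\ker(G)\to Z$ yields a bounded ``Dirichlet-type'' solution operator $e_\gl:\CC^m\to \Dom(L)$, uniquely characterised by $(\gl-L)e_\gl U=0$ and $Ge_\gl U=U$. Setting $B:=(\gl-A_{-1})e_\gl$ defines an element of $\Lin(\CC^m,Z_{-1})$ that is independent of $\gl$, and the decomposition $z-e_\gl Gz\in\ker(G)=\Dom(A)$ for $z\in\Dom(L)$ gives the identity $Lz=A_{-1}z+BGz$. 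I would then define $D:=\lim_{\gl\to\infty,\,\gl>0}P(\gl)\in\CC^{m\times m}$, which exists by hypothesis, take $C$ to be the restriction of $K$ to $\Dom(A)$, and verify that $C_\Lambda z=Kz-DGz$ for all $z\in\Dom(L)$ by using the uniform bound on $P$ along the line $\re\gl=\gs$.

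The crux is admissibility of $B$ and $C$, and this is precisely where impedance passivity is used. Applying the inequality $\re\iprod{Lz(t)}{z(t)}\leq \re\iprod{Gz(t)}{Kz(t)}$ along classical solutions of~\eqref{eq:BCSgen} and combining Cauchy--Schwarz with Young's inequality gives a coupled energy estimate whose Laplace transform delivers the required $L^2$ bounds; this is exactly the route by which~\citel{MalSta06}{Sec.~1} and~\cite{Sta02} manufacture regular linear systems from impedance passive boundary nodes. Regularity then follows from $\sup_{s\in\R}\norm{P(\gs+is)}<\infty$ via the identity $P(\gl)=C_\Lambda R(\gl,A_{-1})B+D$, obtained by applying $K$ to $z=e_\gl U=R(\gl,A_{-1})BU$ and using $Ge_\gl U=U$; in particular the transfer function of the regular tuple coincides with $P$ on $\rho(A)\cap\CC_+$.

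The remaining items are then routine. Impedance passivity of $(A,B,C,D)$ follows from that of the boundary node by substituting $Lz=A_{-1}z+BU$ and $Kz=C_\Lambda z+DU$ with $U=Gz$ into the inequality of Definition~\ref{def:BCS}. The equality $Z^B=\Dom(L)$ is immediate from $\Dom(L)=\Dom(A)+\ran(e_\gl)$ together with $\ran(e_\gl)=\ran(R(\gl,A_{-1})B)$, and $\ran(B)\cap Z=\set{0}$ follows by noting that $Bu\in Z$ forces $e_\gl u=R(\gl,A_{-1})Bu\in\Dom(A)=\ker(G)$, hence $u=Ge_\gl u=0$. The main obstacle throughout is the admissibility step: neither the passivity inequality nor the transfer function bound suffices on its own, but their combination, as packaged in the cited systems-theoretic results, does the job, so the proof ultimately reduces to a careful translation of those results into the present notation.
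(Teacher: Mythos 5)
Your proposal follows essentially the same route as the paper: pass from the boundary node $(G,L,K)$ to a system node and then to a regular tuple via the cited systems-theoretic machinery of~\cite{MalSta06,Sta02,TucWei14,Wei94b}, and read off the identities $Lz=A_{-1}z+BGz$, $Kz=C_\Lambda z+DGz$ and the facts $Z^B=\Dom(L)$, $\ran(B)\cap Z=\set{0}$ from that framework. Your explicit construction of $B$ via the solution operator $e_\gl$ (with $R(\gl,A_{-1})B=e_\gl$) is a sound and transparent alternative to invoking \citel{MalSta06}{Thm.~2.3} directly for the existence of $B$.

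One step deserves correction, however. You describe the admissibility of $B$ and $C$ as coming from the passivity inequality via an energy estimate: running $\re\iprod{Lz(t)}{z(t)}\le\re\iprod{Gz(t)}{Kz(t)}$ along solutions, applying Cauchy--Schwarz and Young, and Laplace transforming. As stated this does not go through. The energy inequality only yields
\eq{
\|z(T)\|_Z^2 - \|z_0\|_Z^2 \;\le\; \int_0^T\big(\|U(t)\|_{\CC^m}^2+\|Y(t)\|_{\CC^m}^2\big)\,\d t,
}
which bounds the state in terms of input \emph{and} output, whereas admissibility of $C$ requires the reverse estimate $\int_0^\tau\|Y(t)\|^2\,\d t\lesssim\|z_0\|^2$ when $U\equiv0$ (passivity alone gives only $\|z(T)\|\le\|z_0\|$, with no control of $\int\|Y\|^2$), and admissibility of $B$ similarly needs $\int\|Y\|^2$ controlled by $\int\|U\|^2$. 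In the paper the step is instead: impedance passivity of the boundary node, together with \citel{MalSta06}{Thm.~2.3 and Prop.~2.5} and \citel{Sta02}{Thm.~4.2}, produces an impedance passive system node; the uniform bound on $P$ along a vertical line is then fed into \citel{Sta02}{Thm.~5.1} to conclude well-posedness of that system node; and well-posedness yields admissibility of $B$ and $C$ by \citel{TucWei14}{Prop.~4.9}. Regularity and the value of $D$ then come from the convergence of $P(\gl)$ as $\gl\to\infty$ via \citel{TucWei14}{Thms.~5.5--5.6}. You do note at the end that both hypotheses are needed and that the argument reduces to the cited results, so the overall plan is sound; but the intermediate direct energy-estimate justification for admissibility should be replaced by this citation chain.
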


\begin{proof}
By~\citel{MalSta06}{Thm.~2.3 and Prop.~2.5}, the boundary node $(G,L,K)$
defines a `system node' $S_{\mathrm{node}}$ in the sense of~\citel{MalSta06}{Def.~2.1} or~\citel{Sta02}{Def.~2.1}.
By definition, the system node $S_{\mathrm{node}}$ is a linear block operator
\eq{
S_{\mathrm{node}} = \pmat{A\&B\\C\&D}: \Dom(S_{\mathrm{node}})\subset Z\times \CC^m\to Z\times \CC^m
}
with components   $C\&D:D(S_\mathrm{node})\subset Z\times\CC^m\to\CC^m$ and
$$A\&B:D(S_\mathrm{node})\subset Z\times\CC^m\to Z,\qquad A\&B \pmat{z\\U} = A_{-1}z+BU,\qquad \pmat{z\\U}\in \Dom(S_{\mathrm{node}}),$$ where  $A: \Dom(A)\subset Z\to Z$ is the generator of a $C_0$-semigroup on $Z$ and $B\in \Lin(\CC^m,Z_{-1})$.
The result~\citel{MalSta06}{Thm.~2.3(ii)}
in particular shows that $A=L|_{\ker(G)}$ and that the `control operator' $B\in \Lin(\CC^m,Z_{-1})$ of the system node $S_{\mathrm{node}}$ satisfies $Lz = A_{-1}z+BGz$ for $z\in \Dom(L)$. Moreover, by~\citel{MalSta06}{Thm.~2.3(v)} we have $Z\cap \ran(B)=\set{0}$ and, letting $\lambda_0\in\rho(A)\cap\CC_+$, 
$$\Dom(L)=\Dom(A)+ \ran(R(\gl_0,A_{-1})B)=Z^B,$$
 while  by~\citel{MalSta06}{Thm.~2.3(iv)} the `combined observation and feedthrough operator' $C\&D$ of $S_{\mathrm{node}}$ is given by 
$$C\&D \pmat{z\\U}=Kz$$ for all $z\in \Dom(L)$ satisfying $Gz=U$. This further implies that the `observation operator' $C\in \Lin(\Dom(A),\CC^m)$ of $S_{\mathrm{node}}$ satisfies $Cz=Kz$ for all $z\in \Dom(A)=\ker(G)$. Moreover, the transfer function $P_{\mathrm{node}}$ of the system node \citel{Sta02}{Def.~2.1} then has the form
\eqn{
\label{eq:Pnode}
P_{\mathrm{node}}(\gl)U = C\&D \pmat{R(\gl,A_{-1})BU\\U}
= KR(\gl,A_{-1})BU
}
for all $U\in\CC^m$ and $\gl\in\rho(A)\cap\CC_+$. But if we write $z=R(\gl,A_{-1})BU\in Z^B=\Dom(L)$ then~\citel{MalSta06}{Thm.~2.3(v)} implies that $Gz=GR(\gl,A_{-1})BU=U$, and thus 
$$(\gl-L)z=(\gl -A_{-1})z-BGz = (\gl -A_{-1})R(\gl,A_{-1})BU-BU = 0.$$ 
This shows that in fact $P_{\mathrm{node}}(\gl)U=Kz=P(\gl)U$ for all $U\in\CC^m$ and $\gl\in\rho(A)\cap\CC_+$. By~\citel{Sta02}{Thm.~4.2} the system node $S_{\mathrm{node}}$ is impedance passive if (and only if) 
$$\re \iprod{A_{-1}z+BU}{z}_Z\leq \re \Iprod{C\&D \pmat{z\\U}}{U}_{\CC^m}$$ 
for all $z\in \Dom(L)$ and $U\in\CC^m$ satisfying $Gz=U$. This property holds since for any  $z\in \Dom(L)$ and $U\in \CC^m$ such that $Gz=U$ we have $A_{-1}z+BU=A_{-1}z+BGz=Lz$ and $C\&D(z,U)^T=Kz$, and thus
\eqn{
\label{eq:nodepassive}
\re\iprod{A_{-1}z+BU}{z}_Z
=\re\iprod{Lz}{z}_Z
\leq \re \iprod{Kz}{Gz}_{\CC^m}
=\re \Iprod{C\&D \pmat{z\\U}}{U}_{\CC^m}
}
by impedance passivity of the boundary node.
Furthermore, our assumption that $P$ (and thus also $P_{\mathrm{node}}$) is uniformly bounded on a vertical line in $\overline{\CC_+}$ together with~\citel{Sta02}{Thm.~5.1} shows that $S_{\mathrm{node}}$ is well-posed in the sense of~\citel{Sta02}{Def.~2.1} (or~\citel{TucWei14}{Def.~4.4}). In particular, the operators $B\in \Lin(\CC^m,Z_{-1})$ and $C\in \Lin(\Dom(A),\CC^m)$ are admissible with respect to the semigroup generated by $A$~\citel{TucWei14}{Prop.~4.9}.

Our assumption that $P(\gl)$ converges to a well-defined limit
as $\gl\to\infty$ with $\gl>0$ together with~\citel{TucWei14}{Thm.~5.6} (or~\citel{Wei94b}{Thm.~5.8}) implies that the system node $S_{\mathrm{node}}$ is `regular' in the sense of~\citel{TucWei14}{Def.~5.2}.
If we define $D=\lim_{\gl\to \infty,\lambda>0}P(\gl)$, then~\citel{TucWei14}{Thm.~5.5} shows that  $\ran(R(\gl,A_{-1})B)\subset \Dom(C_\Lambda)$ and the transfer function $P$ has the form
\eq{
P(\gl)=C_\Lambda R(\gl,A_{-1})B+D, \qquad \gl\in\CC_+.
}
Thus $(A,B,C,D)$ is regular in the sense of Definition~\ref{def:RLS}.
Finally, let $z\in \Dom(L)=Z^B$ be arbitrary. Then there exist $z_0\in \Dom(A)=\ker(G)$, $\gl_0\in\rho(A)\cap\CC_+$ and $U\in\CC^m$ such that $z=z_0+R(\gl_0,A_{-1})BU$. By \citel{MalSta06}{Thm.~2.3(v)} we have
$GR(\gl_0,A_{-1})B=I$, and hence $Gz=U$.
Now a direct computation using  $Cz_0=Kz_0$ and~\eqref{eq:Pnode} shows that
\eq{
Kz &= Kz_0 + KR(\gl_0,A_{-1})BU
= Cz_0 + P(\gl_0)U
\\&= C_\Lambda z_0 + C_\Lambda R(\gl_0,A_{-1})BU + DU\\
&= C_\Lambda z + DGz;
}
see also \citel{Wei94b}{Rem.~4.11}. Since the same computation also shows that 
$$C\&D \pmat{z\\U}=Kz = C_\Lambda z+DU$$ for $z\in \Dom(L)$ and $U\in\CC^m$ satisfying $Gz=U$, the estimate in~\eqref{eq:nodepassive} implies that the regular tuple $(A,B,C,D)$ is impedance passive.
\end{proof}

\begin{remark}\label{rem:m}
In the study of our wave-heat system we shall  require only 
the case $m=1$ of the general framework set out above. 
However, as already mentioned in the Introduction, the same framework
%We point out, however, that the same framework
 with $m>1$ can 
be used in an analogous way to analyse more complicated coupled systems, 
such as for instance the wave-heat-wave system.
\end{remark}

\subsection{The wave-part}

We now show that the wave part~\eqref{eq:WavePDE} 
can be written in the form~\eqref{eq:BCSgen} for some operators $G_1$, $L_1$ and $K_1$ defining a boundary node,
and that this representation also defines a regular tuple $(A_1,B_1,C_1,D_1)$ via Lemma~\ref{lem:BCStoRLS}.
Boundary control systems and regular tuples associated with one-dimensional 
and multidimensional wave equations are rather well understood; 
see for instance~\citel{MalSta06}{Sec.~5}, \citel{TucWei14}{Ex.~5.8} as well as \cite{GuoZha07,KurZwa15,ZwaLeG10}.
To prove this property for the wave part, we 
begin by identifying the operators $L_1$, $G_1$, and $K_1$ of the boundary node $(G_1,L_1,K_1)$. We can write~\eqref{eq:WavePDE} as a first order system
$$\left\{\begin{aligned}
    \pmat{u_t(x,t)\\v_t(x,t)} &= \pmat{v(x,t)\\u_{xx}(x,t)}, &x\in(-1,0),\ t>0,\\
v(0,t)&=U_1(t),\quad       Y_1(t) = u_x(0,t), \quad
    u(-1,t)=0, & t>0,\\
u(x,0)&=u_0(x), \quad v(x,0)=v_0(x), & x\in (-1,0).
\end{aligned}\right.$$
If, for $t\ge0$, we consider $z_1(t)=(u(\cdot,t),v(\cdot,t))^T$ to be the state 
of an abstract differential equation of the form~\eqref{eq:BCSgen}
on the Hilbert space $Z_1 = H_l^1(-1,0)\times L^2(-1,0)$,
then natural choices for the operators $G_1$, $L_1$ and $K_1$ of the boundary node $(G_1,L_1,K_1)$ are
\eq{
L_1 \pmat{u\\v}& =\pmat{v\\u''},
 \qquad \Dom(L_1)= (H^2(-1,0)\cap H_l^1(-1,0))\times H_l^1(-1,0),\\
G_1 \pmat{u\\v}&=v(0) \qquad \mbox{and}
\qquad
K_1 \pmat{u\\v}=u'(0)
}
for all
$(u,v)^T\in\Dom(L_1)$. In particular, the boundary
condition at $x=-1$ is part of the definition of $\Dom(L_1)$, and the condition at $x=0$ is determined by $G_1$.

\begin{proposition}
  \label{prp:RLSWavePart}
The tuple $(G_1,L_1,K_1)$ is an impedance passive boundary node on $(\CC,Z_1,\CC)$
and defines an impedance passive regular tuple $(A_1,B_1,C_1,D_1)$.
In particular, $D_1=1\in\CC$ and
the operator
\eq{
  A_1 \pmat{u\\v} = \pmat{v\\u''}, \quad
  \Dom(A_1) = \Setm{ \pmat{u\\v} \in H^2(-1,0) \times H_l^1(-1,0)}{u(-1)=v(0)=0}
}
is skew-adjoint with compact resolvent.
The spectrum of $A_1$ consists of simple eigenvalues, namely $\gs(A_1)=\setm{ik\pi}{k\in\Z}$. Writing
$\setm{\psi_k}{k\in\Z}$ for the corresponding set of orthonormal eigenvectors, the operator $C_1$ satisfies $\abs{C_1\psi_k}=1 $ for all $k\in\Z$.
\end{proposition}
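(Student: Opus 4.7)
\noindent
The plan is to verify that $(G_1,L_1,K_1)$ satisfies Definition~\ref{def:BCS} of an impedance passive boundary node on $(\CC,Z_1,\CC)$, then to invoke Lemma~\ref{lem:BCStoRLS} to extract the regular tuple $(A_1,B_1,C_1,D_1)$, and finally to carry out the spectral and transfer-function computations explicitly on $(-1,0)$.

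Conditions (b) and (c) of Definition~\ref{def:BCS} are immediate: continuity of $G_1$ and $K_1$ on $\Dom(L_1)$ follows from the one-dimensional trace theorem, and surjectivity of $G_1$ onto $\CC$ is witnessed by $(0,v)^T$ with any $v\in H^1_l(-1,0)$ satisfying $v(0)=c$. For condition (a), the restriction $L_1|_{\ker(G_1)}$ coincides with the operator $A_1$ of the statement; integrating by parts with the conditions $u(-1)=v(-1)=0$ built into $\Dom(L_1)$, and noting that $\iprod{v'}{u'}_{L^2}-\iprod{u'}{v'}_{L^2}$ is purely imaginary, one obtains for $z=(u,v)^T\in\Dom(L_1)$ the conservation identity
\eq{
\re\iprod{L_1 z}{z}_{Z_1} = \re\bigl(u'(0)\,\overline{v(0)}\bigr) = \re\iprod{K_1 z}{G_1 z}_{\CC},
}
so the node is impedance passive (indeed lossless). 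Specialising to $z\in\ker(G_1)$ shows that $A_1$ is skew-symmetric, and since $\Dom(A_1)\hookrightarrow Z_1$ is compact by Rellich--Kondrachov, skew-adjointness (and hence generation of a unitary $C_0$-group via Stone's theorem) reduces to checking surjectivity of $I-A_1$, which amounts to an elementary two-point BVP.

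The eigenvalue equation $A_1(u,v)^T=\gl(u,v)^T$ amounts to $v=\gl u$ together with $u''=\gl^2 u$, $u(-1)=0$, and (for $\gl\neq 0$) $u(0)=0$, which is the classical Dirichlet problem on $(-1,0)$ and yields $\gl_k=ik\pi$ with $u_k(x)=\sin(k\pi x)$ for $k\in\Z\setminus\set{0}$, while $\gl=0$ produces $u_0(x)=x+1$, $v_0=0$. By compactness of the resolvent $\gs(A_1)=\set{ik\pi:k\in\Z}$ with simple eigenvalues, orthogonality across distinct $k$ is automatic from skew-adjointness, and normalisation in $Z_1$ gives $\psi_k=\frac{1}{|k|\pi}(\sin(k\pi x),ik\pi\sin(k\pi x))^T$ for $k\neq 0$ and $\psi_0=(x+1,0)^T$. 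To extract the regular tuple I solve $(\gl-L_1)z=0$ with $G_1 z=1$: the scalar ODE $u''=\gl^2 u$ with $u(-1)=0$ and $u(0)=1/\gl$ yields $P_1(\gl)=K_1 z=u'(0)=\coth\gl$ on $\CC_+$. This function is uniformly bounded on each vertical line $\sigma+i\R$ with $\sigma>0$ and tends to $1$ as $\gl\to\infty$ along $(0,\infty)$, so Lemma~\ref{lem:BCStoRLS} applies and delivers an impedance passive regular tuple with $D_1=1$. Because the eigenvectors $\psi_k$ lie in $\Dom(A_1)=\ker(G_1)$, no $\Lambda$-extension is needed and $C_1\psi_k$ simply equals $K_1\psi_k=u_k'(0)$; a direct trigonometric evaluation gives $\mathrm{sgn}(k)$ for $k\neq 0$ and $1$ for $k=0$, so $\abs{C_1\psi_k}=1$ in every case.

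The main point requiring care is the bookkeeping distinction between the boundary-node output $K_1$ and the regular-tuple output given by $K_1 z=C_{1\Lambda}z+D_1 G_1 z$ on $\Dom(L_1)$, together with the observation that on $\Dom(A_1)=\ker(G_1)$ these coincide so that the final normalisation reduces to evaluating $u_k'(0)$; everything else is either a textbook invocation of Stone's theorem and Rellich--Kondrachov or the explicit solution of a scalar second-order ODE on $(-1,0)$.
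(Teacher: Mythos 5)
Your argument is correct and follows essentially the same route as the paper: verify the boundary-node axioms, establish impedance passivity by integration by parts, solve $(\lambda-L_1)z=0$ with $G_1z=1$ to get $P_1(\lambda)=\coth\lambda$, apply Lemma~\ref{lem:BCStoRLS}, and then compute the spectrum and $C_1\psi_k=K_1\psi_k$ directly. The only cosmetic difference is your choice of normalisation (using $1/(|k|\pi)$ and $\sin(k\pi x)$ in place of the paper's $1/(k\pi)$ and $\sin(k\pi(x+1))$), which gives $C_1\psi_k=\mathrm{sgn}(k)$ rather than $(-1)^k$; both yield the required $\abs{C_1\psi_k}=1$.
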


\begin{proof}
It is easy to show that the restriction $A_1 = L_1\vert_{\ker(G_1)}: \Dom(A_1)\subset Z_1\to Z_1$ with the above domain 
is skew-adjoint and has compact resolvent.
In particular, $A_1$ generates a unitary group on $Z_1$.
It is also straightforward to show that $G_1,K_1\in \Lin(\Dom(L_1),\CC)$, and certainly $\ran(G_1)=\CC$.
Thus $(G_1,L_1,K_1)$ is a boundary node on $(\CC,Z_1,\CC)$ in the sense of Definition~\ref{def:BCS}. 
If $z=(u,v)^T\in \Dom(L_1)$ then
using $v(-1)=0$ we readily see, using integration by parts, that
\eq{
  \re \iprod{L_1 z}{z}_{Z_1}
  &= \re \l v(0), {u'(0)}\r_{\CC}
  = \re\iprod{G_1 z}{K_1 z}_{\CC}.
}
Thus $(G_1,L_1,K_1)$ is impedance passive.

To show that the wave part also defines a regular tuple,
we compute the transfer function $P_1$ of the boundary node $(G_1,L_1,K_1)$.
By definition, if $\gl\in\CC_+$ then
 $P_1(\gl)=K_1 z$, where $z=(u,v)^T\in \Dom(L_1)$ is such that
\eq{
  \left\{\begin{aligned}
    (\gl-L_1)z&=0\\
    G_1 z&=1
  \end{aligned}\right.
  \qquad \iff \qquad
  \left\{\begin{aligned}
    \gl u(x)&=v(x),&x\in(-1,0),\\
    \gl v(x)&=u''(x),&x\in(-1,0),\\
    u(-1)&=0,\quad
    v(0)=1.
  \end{aligned}\right.
}
We have
 $$u(x) = \frac{\sinh(\gl (x+1))}{\gl \sinh(\gl)},$$ 
 and hence
$P_1(\gl) = u'(0)  = \coth(\gl).$
Since $\sup_{\tau\in\R} \abs{\coth(1+i\tau)}<\infty$
and $P_1(\gl)\to 1$ as $\gl\to\infty$ with $\gl>0$, Lemma~\ref{lem:BCStoRLS} shows 
that the wave part defines an impedance passive regular tuple $(A_1,B_1,C_1,D_1)$ 
on $Z_1$ and that $D_1=\lim_{\gl\to\infty}P_1(\gl)=1$.

The eigenvalues of $A_1$ are $ik\pi$ for $k\in\Z$, and the corresponding orthonormal eigenvectors are given by
\eq{
  \psi_0 = \pmat{x+1\\0}
  \qquad\mbox{and}\qquad  \psi_k  = \frac{1}{k\pi}\pmat{\sin(k\pi (x+1))\\ ik\pi \sin(k\pi(x+1))}\quad  \mbox{for $k\ne0$}.
}
Since
$\setm{\psi_k}{k\in\Z}\subset \Dom(A_1)= \ker(G_1)$, we have
  $C_1\psi_k = K_1 \psi_k =(-1)^k$ for all $k\in\Z$.
\end{proof}

\subsection{Coleman--Gurtin part}
\label{subsecCG}

As our next step we  show that the Coleman--Gurtin part, too, defines an impedance passive regular tuple.
Based on the structure~\eqref{eq:CGPDE} we may consider $z_2(t) = (w(\cdot,t),\eta^t(\cdot,\cdot))^T$ for $t\ge0$ to be the state of the boundary node on the Hilbert space $Z_2 = L^2(0,1)\times \M$, and we may choose the operators $L_2: \Dom(L_2)\subset Z_2\to Z_2$ and $G_2,L_2: \Dom(L_2)\subset Z_2\to \CC$ as
    \eq{
      L_2
\pmat{w\\\eta}&= \pmat{\phi''\\T\eta+w},\qquad \Dom(L_2)= \Setm{\pmat{w\\\eta}\in H_r^1(0,1)\times \Dom(T)}{\phi\in H^2(0,1)},\\
  G_2 \pmat{w\\ \eta} &= -\phi'(0)\qquad\mbox{and} \qquad
  K_2 \pmat{w\\ \eta} = w(0)
    }
for all $(w,\eta)^T\in \Dom(L_2)$.

\begin{proposition}
\label{prp:RLSCGPart}
The tuple $(G_2,L_2,K_2)$ is an impedance passive boundary node on $(\CC,Z_2,\CC)$ 
and defines an impedance passive regular tuple $(A_2,B_2,C_2,D_2)$.
In particular, $D_2=0$ and the transfer function $P_2$  of the regular tuple
is given by
  \eq{
    P_2(\gl)
    = \frac{\tanh \sqrt{\gl/\ell(\gl)}}{\sqrt{\gl \ell(\gl)}}, \qquad \gl\in\CC_+,
}
where $\ell: \overline{\CC_+}\setminus \set{0}\to \CC$ is defined by
$$
\ell(\gl) = 1 + \frac{1}{\lambda} \int_0^\infty \mu(s) (1-\e^{-\lambda s})\, \d s.
$$
\end{proposition}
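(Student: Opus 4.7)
The plan is to follow the template of the proof of Proposition~\ref{prp:RLSWavePart}, adapting it to accommodate the memory component $\eta$. I first verify that $(G_2,L_2,K_2)$ is an impedance passive boundary node on $(\CC,Z_2,\CC)$, then solve the abstract boundary-value problem associated with the transfer function to derive the formula for $P_2$, and finally check the two regularity hypotheses required by Lemma~\ref{lem:BCStoRLS}.

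For the boundary-node properties, $G_2$ and $K_2$ are continuous on $\Dom(L_2)$ (equipped with the graph norm) by the trace theorem, and $\ran(G_2)=\CC$ is straightforward from an explicit construction. To see that $A_2=L_2|_{\ker(G_2)}$ generates a contraction semigroup on $Z_2$ I apply the Lumer--Phillips theorem: dissipativity comes from the impedance passivity estimate proved below, and the range condition $\ran(\gl-A_2)=Z_2$ for some $\gl>0$ follows from the same explicit resolvent computation used in the transfer function calculation (with $G_2 z=0$ replacing $G_2 z=1$). For impedance passivity itself, I integrate $\iprod{\phi''}{w}_{L^2(0,1)}$ by parts in $x$ using $w(1)=0$, expand $\phi'=w'+\int_0^\infty\mu(s)\eta_x(s)\,\d s$, and integrate by parts in $s$ on the $\eta_s$ contribution from $T\eta$. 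The cross-terms arising from the expansion of $\phi'$ cancel against the $w$-contribution in $\iprod{T\eta+w}{\eta}_\M$ (since they are complex conjugates of one another and contribute with opposite signs to the real part), leaving the identity $\re\iprod{L_2 z}{z}_{Z_2} = \re\iprod{G_2 z}{K_2 z}_\CC-\|w\|^2_{H^1_r(0,1)}+\tfrac12\int_0^\infty \mu'(s)\|\eta(s)\|^2_{H^1_r(0,1)}\,\d s$, in which the last two terms are non-positive by the monotonicity of $\mu$ (cf.\ \eqref{Teta}).

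To derive the explicit form of $P_2$, fix $\gl\in\CC_+$ and solve $(\gl-L_2)z=0$ subject to $G_2 z = 1$. The $\eta$-component satisfies the first-order ODE $\eta_s+\gl\eta=w$ in $s$ with $\eta(0^+)=0$, giving $\eta(s)=\gl^{-1}(1-\e^{-\gl s})w$. Substituting this into the definition \eqref{PHIDEF} of $\phi$ yields $\phi=\ell(\gl)w$, so the equation $\gl w=\phi''$ reduces to the scalar ODE $w''=(\gl/\ell(\gl))w$ on $(0,1)$ with boundary data $w(1)=0$ and $\ell(\gl)w'(0)=\phi'(0)=-1$. Solving this ODE explicitly and evaluating $K_2 z=w(0)$ produces the claimed formula $P_2(\gl)=\tanh\sqrt{\gl/\ell(\gl)}/\sqrt{\gl\ell(\gl)}$.

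It remains to verify the two hypotheses of Lemma~\ref{lem:BCStoRLS}. For $\gl>0$ one has $\ell(\gl)\geq 1$ and $|\tanh\sqrt{\gl/\ell(\gl)}|\leq 1$, so $|P_2(\gl)|\leq\gl^{-1/2}\to 0$, which gives $D_2=0$. For uniform boundedness on a vertical line $\re\gl=\gs$ with $\gs>0$, one combines the lower bound $|\sqrt{\gl\ell(\gl)}|\gtrsim|\gl|^{1/2}$ with the fact that $|\tanh z|$ is uniformly bounded on any right half-plane $\{z:\re z\geq \eps>0\}$: it suffices to show that $\re\sqrt{\gl/\ell(\gl)}$ stays bounded below by a positive constant on $\re\gl=\gs$. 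I expect this last estimate to be the main technical obstacle, as it requires careful control of $\ell(\gl)$ for $\gl$ with large imaginary part and will hinge on the structural properties of $\mu$, notably the normalization $\int_0^\infty s\mu(s)\,\d s=1$ coming from the unit total mass of $g$.
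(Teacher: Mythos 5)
Your plan follows the paper's route closely: establish the boundary-node and passivity properties, solve the homogeneous boundary-value problem to obtain the explicit formula for $P_2$, and then invoke Lemma~\ref{lem:BCStoRLS}. The passivity computation, the reduction $\phi=\ell(\gl)w$ leading to $P_2(\gl)=\tanh\sqrt{\gl/\ell(\gl)}/\sqrt{\gl\ell(\gl)}$, and the use of Lumer--Phillips with a separate surjectivity lemma (as in Lemma~\ref{lem:GPBCSEllipticProblem-bis}) all match the paper.

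Two points need attention. First, the claim that $G_2,K_2\in\Lin(\Dom(L_2),\CC)$ ``by the trace theorem'' glosses over the real issue: the norm on $\Dom(L_2)$ is the graph norm of $L_2$, which controls $\norm{\phi''}_{L^2}$, $\norm{T\eta+w}_\M$, $\norm{w}_{L^2}$ and $\norm{\eta}_\M$ but not $\norm{\phi}_{L^2}$ directly. You must first bound $\norm{\phi}_{L^2}$ via the defining relation $\phi=w+\int_0^\infty\mu(s)\eta(s)\,\d s$ and then interpolate to control $\norm{\phi'}_{L^2}$; the trace estimate only comes at the end. Second, and more substantively, the last step — uniform boundedness of $P_2$ on a vertical line $\re\gl=\gs$ — is flagged as ``the main technical obstacle'' and left unfinished, and your hint about what it hinges on is off the mark. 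The estimate does not depend on the normalization $\int_0^\infty s\mu(s)\,\d s=1$; it only requires integrability of $\mu$. Indeed, for $\re\gl\ge0$ one has $\abs{1-\e^{-\gl s}}\le2$, so $\abs{\ell(\gl)-1}\le 2\kappa/\abs{\gl}$ with $\kappa=\int_0^\infty\mu(s)\,\d s$, hence $\ell(\gl)=1+O(\abs{\gl}^{-1})$ uniformly in $\overline{\CC_+}$. This gives $\abs{\gl\ell(\gl)}\ge\tfrac12\re\gl$ and $\re\sqrt{\gl/\ell(\gl)}=\re\sqrt{\gl}+O(1)\ge\tfrac12\sqrt{\re\gl}$ for $\re\gl$ large, and then $\abs{P_2(\gl)}\to0$ uniformly as $\re\gl\to\infty$ — which simultaneously delivers both $D_2=0$ and the vertical-line bound required by Lemma~\ref{lem:BCStoRLS}. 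Pursuing the normalization of the first moment of $\mu$ would send you in the wrong direction.
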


The proof of Proposition~\ref{prp:RLSCGPart}
requires the following lemma.

\begin{lemma}
\label{lem:GPBCSEllipticProblem-bis}
The operator $A_2=L_2\vert_{\ker(G_2)}$ satisfies $\ran(I-A_2)=Z_2$.
\end{lemma}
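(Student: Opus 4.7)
Given $(f,h)\in Z_2$, the goal is to find $(w,\eta)\in\Dom(L_2)\cap\ker(G_2)$ solving
\[
w-\phi''=f\quad\text{and}\quad\eta+\eta_s-w=h,
\]
where $\phi=w+\int_0^\infty\mu(s)\eta(\cdot,s)\,ds$ and the kernel condition is $\phi'(0)=0$. The plan is to solve the second equation explicitly for $\eta$ in terms of $w$ and $h$, substitute into the first to obtain a scalar elliptic boundary-value problem for $w$, and then solve that problem by Lax--Milgram.

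The second equation is a first-order linear ODE in $s$, and since $\eta$ must vanish at $s=0$ (in the sense of the limit condition defining $\Dom(T)$), variation of parameters yields
\[
\eta(x,s)=(1-e^{-s})w(x)+\int_0^s e^{-(s-\sigma)}h(x,\sigma)\,d\sigma.
\]
I would first verify that this $\eta$ belongs to $\Dom(T)$ for every $w\in H^1_r(0,1)$: the bounds $\eta,\eta_s\in\M$ follow from the integrability of $\mu$ together with the monotonicity estimate $\mu(s)\le\mu(\sigma)$ for $\sigma\le s$ (applied after exchanging the order of integration), while the limit $\|\eta(s)\|_{H^1_r(0,1)}\to 0$ as $s\to 0$ follows from the explicit formula, using Cauchy--Schwarz and the fact that $\mu$ is bounded below near $0$ whenever $\mu\not\equiv 0$.

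Substituting into the definition of $\phi$ gives $\phi=(1+\kappa)w+H$, where $\kappa=\int_0^\infty\mu(s)(1-e^{-s})\,ds$ and $H(x)=\int_0^\infty K(\sigma)h(x,\sigma)\,d\sigma$ with $K(\sigma)=e^\sigma\int_\sigma^\infty\mu(r)e^{-r}\,dr$. The monotonicity of $\mu$ gives $K(\sigma)\le\mu(\sigma)$, which combined with $\mu\in L^1(0,\infty)$ and Cauchy--Schwarz in $L^2_\mu$ yields $H\in H^1_r(0,1)$. The first equation then becomes the boundary-value problem
\[
-(1+\kappa)w''-H''+w=f\text{ in }(0,1),\quad w(1)=0,\quad (1+\kappa)w'(0)+H'(0)=0,
\]
which I would solve in weak form by Lax--Milgram applied to the continuous and coercive bilinear form $a(w,\tilde w)=\int_0^1 w\tilde w\,dx+(1+\kappa)\int_0^1 w'\tilde w'\,dx$ on $H^1_r(0,1)$, with linear functional $F(\tilde w)=\int_0^1 f\tilde w\,dx-\int_0^1 H'\tilde w'\,dx$. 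Coercivity is immediate from the definition of $\|\cdot\|_{H^1_r(0,1)}$, and $F$ is bounded by the $L^2$-estimate on $H'$. The resulting $w\in H^1_r(0,1)$ automatically satisfies $w(1)=0$; the distributional form of the weak equation gives $\phi''=w-f\in L^2(0,1)$ and hence $\phi\in H^2(0,1)$, while the boundary terms appearing in the integration by parts recover the Neumann-type condition $\phi'(0)=0$.

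I expect the main technical obstacle to be the verification that the candidate $\eta$ satisfies the limit condition $\|\eta(s)\|_{H^1_r(0,1)}\to 0$ as $s\to 0$. Since $h$ belongs to $\M$ only in a $\mu$-weighted sense and $\mu$ may be unbounded near the origin, some care is needed, but the monotonicity of $\mu$ provides just enough control to close the argument without having to invoke~\eqref{condmu}.
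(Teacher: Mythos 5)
Your proposal is correct and follows essentially the same route as the paper: explicitly integrate the $\eta$-equation using $\eta(\cdot,0)=0$, verify via the monotonicity of $\mu$ (and its integrability) that the resulting $\eta$ lies in $\Dom(T)$ including the limit condition at $s=0$, substitute into the definition of $\phi$, and reduce to a one-dimensional elliptic boundary-value problem for $\phi$ (equivalently $w$). The one point of departure is cosmetic: the paper solves the reduced scalar problem explicitly with $\sinh$-functions and then verifies that the free constant can be adjusted to enforce $\phi'(0)=0$, whereas you invoke Lax--Milgram on the weak form, which works equally well and is marginally more robust, but in one space dimension buys nothing essential. Your identification that \eqref{condmu} is not needed here and that the monotonicity of $\mu$ alone closes the $s\to0$ argument matches the paper exactly.

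Two small points worth noting. First, the paper packages the estimate $\|\hat\xi\|_\M\le\|\hat\eta\|_\M$ as a self-contained preliminary step (via Young's inequality for convolutions on the scalar function $\sqrt{\mu(\cdot)}\,\|\hat\eta(\cdot)\|_{H^1_r}$), which you would need in some form to control $\eta\in\M$; your order-exchange argument using $K(\sigma)\le\mu(\sigma)$ is a valid alternative. Second, in your verification of the limit $\|\eta(s)\|_{H^1_r(0,1)}\to0$, the exact mechanism is that monotonicity gives $\mu(\sigma)\ge\mu(s)$ for $\sigma\le s$, which combined with Cauchy--Schwarz bounds $\|\hat\xi(s)\|_{H^1_r}$ by $\mu(s)^{-1}\bigl(\int_0^s\mu\bigr)^{1/2}\|\hat\eta\|_\M$; the factor $\mu(s)^{-1}$ is bounded near zero (since $\mu$ is non-increasing and nontrivial) and $\int_0^s\mu\to0$ by integrability, so the product tends to zero. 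This is precisely the paper's computation, and your heuristic ``$\mu$ is bounded below near zero'' correctly captures why it works.
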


\begin{proof}
We begin by showing that, for every $\hat\eta\in\M$, the function $\hat{\xi}$ defined by 
$$
\hat \xi(x,s)= \int_0^s \e^{- (s-\sigma) } \hat \eta (x,\sigma) \,\d \sigma,\qquad x \in (0,1),\ s>0,
$$
belongs to $\M$ and satisfies the estimate
$\|\hat \xi\|_\M \leq \|\hat \eta\|_\M.$
To this end,
we introduce the auxiliary function $K$ defined by
$$
K(s) = \int_0^s \e^{-(s-\sigma)}\sqrt{\mu(\sigma)} \|\hat \eta(\sigma)\|_{H^1_r(0,1)} \,\d \sigma,\qquad s>0.
$$
By Young's inequality for convolutions we have
$\|K\|_{L^2(0,\infty)} \leq \|\hat \eta\|_\M$.
Hence, using the monotonicity of $\mu$, we obtain
$\|\hat \xi \|_{\M} \leq \|K\|_{L^2(0,\infty)} \leq \|\hat \eta\|_\M,$
as desired.

Next, for an arbitrarily given $\hat{z}=(\hat{w},\hat{\eta})^T\in Z_2$, we consider the equation $(I-A_2)z=\hat z$
in the unknown $z=(w,\eta)^T\in \Dom(A_2)$. Componentwise, we get the system
\begin{equation}
\label{three}
\left\{\begin{aligned}
w(x) - \phi''(x) &= \hat w(x), &x\in(0,1),\\
\eta(x,s) - T\eta(x,s) - w(x) &= \hat \eta(x,s),\quad &x\in(0,1),\ s>0.
\end{aligned}\right.
\end{equation}
Integrating the second identity  and using $\eta(x,0)=0$, we find
\begin{equation}
\label{etasol}
\eta(x,s)= (1- \e^{-s}) w(x) + \hat \xi(x,s),\qquad x\in(0,1),\ s>0.
\end{equation}
From the definition of $\phi$ we infer that
$w(x) = \phi(x)/\ell(1) -\hat \varrho(x)$,
where
$$\hat \varrho(x)= \frac{1}{\ell(1)}\int_0^\infty\mu(s)\hat \xi(x,s)\, \d s,\qquad x\in(0,1).
$$
Substituting into the first equation in \eqref{three}, we arrive at
$$
 \frac{\phi(x)}{\ell(1)} - \phi''(x) = \hat w(x)
 +\hat \varrho(x),\qquad x\in(0,1).
$$
The general solution of this equation above with the boundary condition $\phi(1)=0$
(coming from the fact that $w(1)=\eta(1,s)=0$) can be written as
\begin{equation}
\label{phisol}
\phi(x) = b\sinh \bigg(\frac{1-x}{\sqrt{\ell(1)}}\bigg) -\Phi(x),\qquad x\in(0,1),
\end{equation}
where $b\in \CC$ and
$$
\Phi(x)= \sqrt{\ell(1)} \int_{x}^1 \sinh \bigg(\frac{r-x}{\sqrt{\ell(1)}}\bigg)
(\hat w(r) +\hat \varrho(r))\, \d r,\qquad x\in(0,1).
$$
Accordingly, we have
\begin{equation}
\label{wsol}
w(x)= \frac{b}{\ell(1)}\sinh \bigg(\frac{1-x}{\sqrt{\ell(1)}}\bigg) -\frac{\Phi(x)}{\ell(1)}
- \hat \varrho(x),\qquad x\in(0,1).
\end{equation}
We now claim that $\phi\in H^2(0,1)$ and $w\in H^1_r(0,1)$. By \eqref{phisol} and \eqref{wsol},  the claim follows provided that $\hat \varrho\in H^1_r(0,1)$.
But the latter is true, since 
$$
\|\hat \varrho\|_{H^1_r(0,1)} \leq \frac{\sqrt{\kappa}}{\ell(1)} \|\hat \xi\|_{\M}
\leq \frac{\sqrt{\kappa}}{\ell(1)} \|\hat \eta\|_{\M}$$
by our earlier estimate, where $\kappa = \int_0^\infty \mu(s) \,\d s$ denotes the total mass of $\mu$.
Next we show that the function $\eta$ given by \eqref{etasol}
belongs to $\M$. Since we already know that $\smash{\hat \xi\in\M}$, we only need to prove that the map $s\mapsto
(1- \e^{-s}) w$ lies in $ \M$. But this follows from the estimate
$$
\int_0^\infty \mu(s) |1- \e^{-s}|^2 \|w\|_{H^1_r(0,1)}^2\, \d s \leq \kappa \|w\|_{H^1_r(0,1)}^2.
$$
Since $\eta,\hat\eta,w\in\M$, we also have $\eta_s  = \hat \eta + w - \eta \in \M$. Finally,
by monotonicity of $\mu$,
$$
\|\eta(s)\|_{H^1_r(0,1)} \leq |1-\e^{-s}|\|w\|_{H^1_r(0,1)}
+ \frac{\e^{s}}{\mu(s)} \bigg(\int_0^s \mu(\sigma)\,\d \sigma\bigg)^\frac12 \|\hat \eta\|_\M \to 0
$$
as $s\to 0$, and we have thus proved that $\eta\in D(T)$.

It remains only to show that the constant $b$ in \eqref{phisol} may be chosen in such a way that $\phi'(0)=0$, but a straightforward calculation yields
$$
b = \bigg[\frac{1}{\sqrt{\ell(1)}}\cosh\bigg(\frac{1}{\sqrt{\ell(1)}}\bigg)\bigg]^{-1}
 \int_0^1 \cosh \bigg(\frac{r}{\sqrt{\ell(1)}}\bigg)
\big(\hat w(r) +\hat \varrho(r)\big)\, \d r,
$$ 
and this completes the proof.
\end{proof}

\begin{proof}[Proof of Proposition~\textup{\ref{prp:RLSCGPart}}]
We begin by showing that $(G_2,L_2,K_2)$ is 
an impedance passive boundary node in the sense of Definition~\ref{def:BCS}.
We note first  that $\ran(G_2)=\CC$.
If $z = (w,\eta)^T\in \Dom(L_2)$, then using $w(1)=0$ and \eqref{Teta} we readily get
  \eq{
    \re  \iprod{L_2 z}{z}_{Z_2}
    &=\re \l w(0),-\phi'(0)\r_\CC - \|w\|_{H^1_r(0,1)}^2 +\re \l T\eta,\eta\r_\M\\
&\leq \re \l w(0),-\phi'(0)\r_\CC
    = \re \iprod{G_2 z}{K_2 z}_\CC.
  }
This estimate already shows that $(G_2,L_2,K_2)$ is impedance passive if it is a boundary node in the sense of Definition~\ref{def:BCS}. Moreover, the same estimate shows that
 $\re \iprod{L_2z}{z}\leq 0$ for $z\in \ker(G_2)$, and thus the restriction $A_2 = L_2\vert_{\ker(G_2)}$ is dissipative.
By Lemma~\ref{lem:GPBCSEllipticProblem-bis} we also have $\ran(I-A_2)=Z_2$, and therefore $A_2$ generates a contraction semigroup
on $Z_2$ by the Lumer--Phillips theorem.
In order to prove that $(G_2,L_2,K_2)$ is an impedance passive boundary node it remains to verify that $G_2,K_2\in \Lin(\Dom(L_2),\CC)$.
Recall that the norm on $\Dom(L_2)$ is taken to be the graph norm of $L_2$, that is to say
\eq{
\norm{z}_{\Dom(L_2)}^2
=\norm{L_2z}_{Z_2}^2 + \norm{z}_{Z_2}^2
=\norm{\phi''}_{\Lp[2](0,1)}^2+\norm{T\eta + w}_{\mathcal{M}}^2 +
\norm{w}_{\Lp[2](0,1)}^2+\norm{\eta }_{\mathcal{M}}^2,
}
for $z=(w,\eta)^T\in \Dom(L_2)\subset Z_2$.
Note first  that
\eq{
\abs{G_2z}=|\phi'(0)|
\lesssim \|\phi'\|_{H^1(0,1)}
\lesssim  \|\phi'\|_{L^2(0,1)}
+ \|z\|_{\Dom(L_2)}.
}
By interpolation and an application of Young's inequality we have $$ \|\phi'\|_{L^2(0,1)}\lesssim   \|\phi\|_{L^2(0,1)} + \|z\|_{\Dom(L_2)},$$
and hence
$\abs{G_2z} \lesssim \|\phi\|_{L^2(0,1)} + \|z\|_{\Dom(L_2)}.$
In order to show that $G_2\in \Lin(\Dom(L_2),\mathbb{C})$, it remains to
control the term $\|\phi\|_{L^2(0,1)}$. To this end we observe that, by definition of $\phi$, 
\begin{align*}
\|\phi\|_{L^2(0,1)} \lesssim \|z\|_{\Dom(L_2)} + \int_0^\infty \mu(s)\|\eta(s)\|_{H^1_r(0,1)} \,\d s
\lesssim \|z\|_{\Dom(L_2)} + \|\eta\|_\M
\lesssim \|z\|_{\Dom(L_2)},
\end{align*}
and hence boundedness of $G_2$ follows.
In order to show that $K_2\in \Lin(\Dom(L_2),\CC)$ we first note that, since $w(1)=0$, we have
$\abs{K_2z}=|w(0)|  \lesssim \|w\|_{H^1_r(0,1)}$.
Next, the definition of $\phi$ implies
\begin{align*}
\|w\|_{H^1_r(0,1)}
\lesssim \| \phi'\|_{L^2(0,1)}  +  \|\eta\|_{\mathcal{M}}
\lesssim\| \phi'\|_{L^2(0,1)} + \|z\|_{\Dom(L_2)}.
\end{align*}
The term $\| \phi'\|_{L^2(0,1)}$ can be estimated as before, and the boundedness of $K_2$ follows.
Thus $(G_2,L_2,K_2)$ is a boundary node on $(\CC,Z_2,\CC)$ in the sense of Definition~\ref{def:BCS}.

The transfer function $P_2$ of the boundary node is defined, for $\gl\in\CC_+$, by $P_2(\gl)=K_2z$, where
$z=(w,\eta)^T\in \Dom(L_2)$ solves the problem
$(\gl-L_2)z=0$ and $G_2 z=1$.
Arguing as in the proof of Lemma \ref{lem:GPBCSEllipticProblem-bis}, the first component of $z$
can be written as 
\eq{
w(x)= \frac{\sinh\big(\sqrt{\gl/\ell(\gl)}(1-x)\big)}{\sqrt{\lambda \ell(\gl)}
\cosh \sqrt{\gl/\ell(\gl)}},\qquad x\in(0,1),}
and hence
  \eq{
    P_2(\gl)
=  w(0)
= \frac{\tanh\sqrt{\gl/\ell(\gl)}}{\sqrt{\gl \ell(\gl)}}, \qquad \gl\in\CC_+.
}

We now show that
$P_2(\lambda)\to0$ uniformly in $\im\lambda$ as $\re\lambda\to\infty$. 
This implies in particular that $P_2(\lambda)\to0$ as $\lambda\to\infty$ through the reals and that there exists $\gs>0$ such that $\sup_{s\in\R}\abs{P_2(\gs+is)}<\infty$.
Thus by Lemma~\ref{lem:BCStoRLS}  the Coleman--Gurtin part defines 
an impedance passive regular tuple $(A_2,B_2,C_2,D_2)$ with $D_2 = \lim_{\gl\to\infty} P_2(\gl) = 0$. In order to prove the required uniform decay estimate, observe first that $\ell(\lambda)=1+O(|\lambda|^{-1})$ and hence $\lambda\ell(\lambda)=\lambda+O(1)$ and $\lambda/\ell(\lambda)=\lambda+O(1)$ as $|\lambda|\to\infty$ in the right half-plane.
In particular, $|\lambda\ell(\lambda)|\ge \frac12\re\lambda$ for $\re\lambda$ sufficiently large. Moreover,
$$\re\sqrt{\lambda/\ell(\lambda)}={\re{\sqrt{\lambda}}}+O(1)$$ as $ |\lambda|\to\infty$ with $\lambda\in\CC_+,$
and for $\lambda\in\CC_+$ we have $\re{\sqrt{\lambda}}\ge \sqrt{\re{\lambda}}$. It follows that $\re\sqrt{\lambda/\ell(\lambda)}\ge \frac12\sqrt{\re{\lambda}}$ for $\re\lambda$ sufficiently large. Thus
$$|P_2(\lambda)|\le \frac{1}{|\sqrt{\gl\ell(\gl)}|}\bigg|1-\frac{2}{\e^{2\sqrt{\gl/\ell(\gl)}}+1}\bigg|
\le\sqrt{\frac{2}{\re\lambda}}\left(1+\frac{2}{\e^{\sqrt{\re{\lambda}}}-1}\right)$$
when $\re\lambda$ is sufficiently large, and the claim follows.
\end{proof}

\subsection{Proof of Theorem~\ref{thm:SG}}
\label{sec:AppCLOpForm}

Theorem~\ref{thm:SG} is an immediate corollary of the following more detailed proposition.
\begin{proposition}
\label{prp:Ablockstructure}
The operator $\A$ has the form
\begin{align}
  \A &= \pmat{A_{1,-1}&B_1C_{2\Lambda}\\-B_2C_{1\Lambda}&A_{2,-1}-B_2D_1C_{2\Lambda}}, \label{eq:A_block}\\
  \Dom(\A)&= \Setm{\pmat{z_1\\z_2}\in Z_1^{B_1}\times Z_2^{B_2}}{ \begin{matrix}
       A_{1,-1}z_1+B_1 C_{2\Lambda}z_2\in Z_1,\\
   A_{2,-1}z_2
      -B_2 (C_{1\Lambda}z_1+D_1 C_{2\Lambda}z_2) \in Z_2
\end{matrix}
}\label{eq:dom_A},
\end{align}
where $(A_1,B_1,C_1,D_1)$ and $(A_2,B_2,C_2,D_2)$ are the impedance passive 
regular tuples associated to the wave part and the Coleman--Gurtin part, respectively.
The operator $\A$ generates a contraction semigroup on the space $\H$.
\end{proposition}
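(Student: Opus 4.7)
The plan is to combine Lemma~\ref{lem:BCStoRLS}, applied to the boundary nodes of Propositions~\ref{prp:RLSWavePart} and~\ref{prp:RLSCGPart}, with the feedback interconnection~\eqref{eq:Feedback}: the block representation of $\A$ will follow by rewriting every boundary trace in the definition of $\A$ via the tuple data of the wave and Coleman--Gurtin parts, and semigroup generation will follow from Lumer--Phillips using the impedance passivity of both tuples.

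For the block structure, I would fix $(z_1,z_2)^T\in\Dom(\A)$ with $z_1=(u,v)^T\in Z_1^{B_1}=\Dom(L_1)$ and $z_2=(w,\eta)^T\in Z_2^{B_2}=\Dom(L_2)$. Lemma~\ref{lem:BCStoRLS} gives
\eq{
L_k z_k = A_{k,-1}z_k + B_k G_k z_k, \qquad K_k z_k = C_{k\Lambda}z_k + D_k G_k z_k \qquad (k=1,2),
}
with $D_2=0$. The coupling conditions $v(0)=w(0)$ and $u'(0)=\phi'(0)$ together with the explicit form of $G_k$, $K_k$ translate into
\eq{
G_1 z_1 = C_{2\Lambda}z_2, \qquad G_2 z_2 = -(C_{1\Lambda}z_1 + D_1 C_{2\Lambda}z_2),
}
and substituting these identities into $\A(z_1,z_2)^T=(L_1 z_1, L_2 z_2)^T$ yields~\eqref{eq:A_block}. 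Conversely, if $z_k\in Z_k^{B_k}$ satisfies the containment conditions in~\eqref{eq:dom_A}, then
\eq{
A_{1,-1}z_1+B_1 C_{2\Lambda}z_2 = L_1 z_1 + B_1\bigl(w(0)-v(0)\bigr) \in Z_1,
}
which combined with $\ran(B_1)\cap Z_1=\{0\}$ from Lemma~\ref{lem:BCStoRLS} forces $v(0)=w(0)$; the analogous computation using $\ran(B_2)\cap Z_2=\{0\}$ yields $u'(0)=\phi'(0)$, so $(z_1,z_2)^T\in\Dom(\A)$.

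For contractivity, I would add the impedance passivity inequality for $(A_1,B_1,C_1,D_1)$ with input $U_1=C_{2\Lambda}z_2$ and that for $(A_2,B_2,C_2,D_2)$ with input $U_2=-(C_{1\Lambda}z_1+D_1 C_{2\Lambda}z_2)$. Setting $a=C_{1\Lambda}z_1+D_1 C_{2\Lambda}z_2$ and $b=C_{2\Lambda}z_2$, the right-hand sides sum to $\re\l a,b\r_\CC - \re\l b,a\r_\CC = 0$ since $\CC$-inner products are hermitian, while the left-hand sides add to $\re\l\A(z_1,z_2)^T,(z_1,z_2)^T\r_\H$; hence $\A$ is dissipative. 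The range condition $\ran(\lambda_0-\A)=\H$ for some $\lambda_0>0$ then follows from an elliptic argument extending Lemma~\ref{lem:GPBCSEllipticProblem-bis} to the coupled system: solve the wave component via the resolvent of the skew-adjoint operator $A_1$ to express $z_1$ in terms of the boundary trace of $z_2$, then reduce to a uniquely solvable elliptic problem for $z_2$ alone. Lumer--Phillips then yields the contraction semigroup $\St$ on $\H$. The main obstacle is the identification of the two descriptions of $\Dom(\A)$, which relies on the abstract property $\ran(B_k)\cap Z_k=\{0\}$ being the precise mechanism that encodes the original boundary coupling conditions within the block formulation.
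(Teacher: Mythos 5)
Your identification of the block structure and the domain is essentially the paper's own argument: both proofs apply Lemma~\ref{lem:BCStoRLS} to write $L_k = A_{k,-1}+B_kG_k$ and $K_k=C_{k\Lambda}+D_kG_k$ (with $D_2=0$), substitute the coupling conditions $G_1z_1 = K_2z_2$, $G_2z_2=-K_1z_1$, and then use $\ran(B_k)\cap Z_k=\{0\}$ to run the argument in reverse and recover the coupling conditions from the containment conditions in~\eqref{eq:dom_A}. So this half is on target.

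The semigroup generation half, however, takes a different route from the paper. The paper simply invokes an abstract result (\cite[Lem.~4.2]{Pau19}) stating that a feedback interconnection of two impedance passive regular tuples with $D_1\geq 0$, $D_2=0$ generates a contraction semigroup. You instead argue directly via Lumer--Phillips. Your dissipativity computation is correct and is in fact clean: adding the two impedance passivity inequalities with the interconnected inputs $U_1 = C_{2\Lambda}z_2$ and $U_2=-(C_{1\Lambda}z_1+D_1C_{2\Lambda}z_2)$, the cross terms on the right cancel because $\re\iprod{a}{b}_\CC = \re\iprod{b}{a}_\CC$, and the left-hand sides assemble to $\re\iprod{\A z}{z}_\H$. (Strictly one also uses $\re D_1\,|C_{2\Lambda}z_2|^2\geq 0$ --- this happens automatically here since the contribution of $D_1$ drops out with the sign conventions you chose, and in any case $D_1=1>0$.) What your version buys is a self-contained proof, at the cost of having to establish the range condition $\ran(\gl_0-\A)=\H$ by hand. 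That part of your proposal is only sketched: expressing $z_1$ via the resolvent of the skew-adjoint $A_1$ with inhomogeneous boundary data at $x=0$ requires a lifting of the boundary trace (the resolvent alone handles only $\ker(G_1)$), and the subsequent reduction to an elliptic problem for $z_2$ would need the kind of careful treatment carried out in Lemma~\ref{lem:GPBCSEllipticProblem-bis}, now with an additional nonlocal boundary condition at $x=0$ coming from the wave part. This is doable but not immediate, and it is precisely the work that citing \cite[Lem.~4.2]{Pau19} avoids.
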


\begin{proof}
By definition, we have $\H = Z_1\times Z_2$.
Let $(G_1,L_1,K_1)$ and $(G_2,L_2,K_2)$ be the boundary nodes associated to the wave part and the Coleman--Gurtin part, respectively, as defined in Propositions~\ref{prp:RLSWavePart} and~\ref{prp:RLSCGPart}.
If we write $z_1=(u,v)^T$ and $z_2=(w,\eta)^T$,
the operator $\A$ and its domain may be written as
\eq{
  \mathbb{A}\pmat{u\\v\\w\\\eta}
  = \pmat{L_1&0\\0&L_2} \pmat{z_1\\z_2},\qquad 
  \Dom(\mathbb{A}) = \Setm{\pmat{z_1\\z_2}\in \Dom(L_1)\times \Dom(L_2)}{\begin{matrix}G_1z_1=K_2z_2,\\ G_2z_2=-K_1z_1\end{matrix}}.
}
Since $D_2=0$, Lemma~\ref{lem:BCStoRLS} implies that $L_1 = A_{1,-1}+B_1G_1,\ K_1 = C_{1\Lambda}+D_1G_1 $  on $\Dom(L_1)$, and $L_2 = A_{2,-1}+B_2G_2,\ K_2 = C_{2\Lambda}$ on $ \Dom(L_2).$
It follows that, for $(z_1,z_2)^T\in \Dom(\A)$, $G_1z_1=K_2z_2=C_{2\Lambda }z_2$ and $$G_2z_2 = -K_1z_1
= -C_{1\Lambda }z_1-D_1G_1z_1 = -C_{1\Lambda}z_1-D_1C_{2\Lambda}z_2,$$ and hence
\eq{
  L_1 z_1
  &= A_{1,-1} z_1 + B_1G_1z_1
  = A_{1,-1} z_1 + B_1C_{2\Lambda}z_2,\\
  L_2 z_2
  &= A_{2,-1} z_2 + B_2 G_2z_2
  = (A_{2,-1} - B_2D_1C_{2\Lambda}) z_2 - B_2 C_{1\Lambda}z_1 .
}
These formulas show that the operator $\A$ has the desired form  \eqref{eq:A_block} on $\Dom(\A)$ and that the inclusion ``$\subset$'' holds in \eqref{eq:dom_A}.
It therefore remains to show that the inclusion ``$\supset$'' holds in \eqref{eq:dom_A}.
To this end, assume that $(z_1,z_2)^T\in Z_1^{B_1}\times Z_2^{B_2}$
is such that
      $A_{1,-1}z_1+B_1 C_{2\Lambda}z_2\in Z_1$ and
      $A_{2,-1}z_2
      -B_2 (C_{1\Lambda}z_1+D_1 C_{2\Lambda}z_2) \in Z_2$. Then  $(z_1,z_2)^T\in \Dom(L_1)\times \Dom(L_2)$ by Lemma~\ref{lem:BCStoRLS}, so it suffices to show that
$G_1z_1=K_2z_2$ and $G_2z_2 = -K_1z_1$. Lemma~\ref{lem:BCStoRLS} also implies that
 $Z_k\cap \ran(B_k)=\set{0}$ for $k=1,2$.
We have
\eq{
      Z_1\ni A_{1,-1}z_1+B_1 C_{2\Lambda}z_2
      = L_1 z_1 - B_1G_1z_1 + B_1C_{2\Lambda} z_2
      = L_1 z_1 + B_1(-G_1z_1 + K_2 z_2),
}
and since $L_1z_1\in Z_1$
we see that
$G_1z_1=K_2z_2$.
Since $G_1z_1=K_2z_2=C_{2\Lambda}z_2$ and $C_{1\Lambda}z_1+D_1C_{2\Lambda}z_2=K_1z_1$ we find similarly that
\eq{
 Z_2&\ni
 A_{2,-1}z_2 -B_2 (C_{1\Lambda}z_1+D_1 C_{2\Lambda}z_2)
 =L_2z_2  -B_2 (G_2z_2+ K_1z_1) ,
}
which implies that $G_2z_2=-K_1z_1$, as required.

Finally, since
$\A$ has the form
in Proposition~\ref{prp:Ablockstructure} where $(A_k,B_k,C_k,D_k)$ for $k=1,2$  are impedance passive regular tuples with $D_1=1\geq 0$ and $D_2=0$, the operator $\A$ generates a contraction semigroup by~\citel{Pau19}{Lem.~4.2}.
\end{proof}

%%%%%%%%%%%%%%%%%%%%%%%%%%%%%%%%%%%%%%%%%%%%%%%%%%%%%%%%%%%%%%

\section{Resolvent Estimates}
\label{sec:ResolventEstimates}

\noindent We now study the behaviour of the resolvent operator $R(\i s,\A)$ as $s\to\pm\infty$.
In  Section~ \ref{subsec:ResolventEstimates1},
we establish an asymptotic upper bound on $\norm{R(\i s,\A)}$, and then in
Section~\ref{subsec:ResolventEstimates2} we shall  show this upper bound to  be optimal.

\subsection{Upper bound}\label{subsec:ResolventEstimates1}  Our main result here is the following.

\begin{theorem}
  \label{thm:Resolvent}
Assume that~\eqref{condmu} holds.
Then the operator $\A$ satisfies $\gs(\A)\subset \CC_-$ and
  \eq{
    \norm{R(\i s,\A)}=O(\abs{s}^{1/2}), \qquad s\to \pm\infty.
  }
\end{theorem}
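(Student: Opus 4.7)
The plan is to invert the block operator~\eqref{eq:Ablockstruct} via a Schur-complement computation, following the general feedback-interconnection scheme for impedance-passive regular tuples in~\cite{Pau19}. Given $f=(f_1,f_2)^T\in\H$ and writing the unknown as $z=(z_1,z_2)^T$, the resolvent equation $(\gl-\A)z=f$ reduces---after eliminating $z_1$ in favour of $z_2$ and setting $y_2=C_{2\Lambda}z_2$---to the single scalar identity
\eqn{
\label{eq:cltfpr}
\bigl(1+P_1(\gl)P_2(\gl)\bigr)y_2 = C_{2\Lambda} R(\gl,A_2)f_2 - P_2(\gl)\,C_{1\Lambda} R(\gl,A_1)f_1,
}
after which $z_1$ and $z_2$ can be recovered from $y_2$ using the open-loop resolvents $R(\gl,A_k)$ and the operators $R(\gl,A_{k,-1})B_k$. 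The theorem thereby reduces to quantitative estimates for the transfer functions $P_1,P_2$ and for $R(\gl,A_2)$ along $\gl=is$, together with a lower bound on the scalar ``closed-loop'' quantity $1+P_1(is)P_2(is)$.

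For the Coleman--Gurtin side I would first use~\eqref{condmu} to obtain, in the spirit of the asymptotic argument at the end of Proposition~\ref{prp:RLSCGPart}, the key estimates $\re\sqrt{is/\ell(is)}\gtrsim |s|^{1/2}$ and $|is\,\ell(is)|\gtrsim |s|$ as $|s|\to\infty$, which together yield $|P_2(is)|=O(|s|^{-1/2})$. In parallel I would prove $\|R(is,A_2)\|=O(1)$ by a contradiction argument exploiting the dissipation encoded in~\eqref{Teta} and~\eqref{condmu}; admissibility of $B_2$ and $C_2$ then gives uniform bounds on $\|C_{2\Lambda}R(is,A_2)\|$ and $\|R(is,A_{2,-1})B_2\|$ along $i\R$. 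The wave side is handled explicitly using $P_1(is)=-i\cot s$ and the orthonormal eigenbasis $\{\psi_k\}$ of $A_1$ with $|C_1\psi_k|=1$ from Proposition~\ref{prp:RLSWavePart}, which allows one to expand $C_{1\Lambda}R(\gl,A_1)$ and $R(\gl,A_{1,-1})B_1$ coordinate-wise.

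The central step is to control $1+P_1(is)P_2(is)$ near the eigenvalues $ik\pi$ of $A_1$. Since $P_1(is)\sim -i/(s-k\pi)$ near $k\pi$ and $|P_2(is)|\asymp|s|^{-1/2}$, one has $|P_1(is)P_2(is)|\asymp(|s|^{1/2}|s-k\pi|)^{-1}$; this purely imaginary quantity cannot equal $-1$, which gives $1+P_1(is)P_2(is)\ne 0$ on $\R\setminus\{0\}$. Combined with Proposition~\ref{prp:Ablockstructure}, contractivity of $\St$, and a separate direct verification that $0\in\rho(\A)$, this yields $i\R\subset\rho(\A)$ and hence $\gs(\A)\subset\CC_-$. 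A case distinction on whether $|s-k\pi|$ lies above or below the threshold $|s|^{-1/2}$ then furnishes the uniform bound $|P_1(is)/(1+P_1(is)P_2(is))|\lesssim |s|^{1/2}$, which is tight precisely on neighbourhoods of $k\pi$ of radius $|s|^{-1/2}$ and is ultimately responsible for the factor $|s|^{1/2}$ in the resolvent estimate.

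Substituting all the above into~\eqref{eq:cltfpr} and the reconstruction formulas for $z_1,z_2$ gives $|y_2|\lesssim \|f\|_\H$ (the singular contributions compensate) and $\|z\|_\H\lesssim |s|^{1/2}\|f\|_\H$, which is the desired bound. I expect the main technical obstacle to be the bookkeeping of the near-cancellations: several factors in the resolvent formula individually blow up as $|s-k\pi|\to 0$, and the uniform $|s|^{1/2}$ rate emerges only after combining them with the closed-loop factor $(1+P_1P_2)^{-1}$. I would organise this step via the abstract machinery of~\cite{Pau19}, which packages the Schur-complement inversion together with the relevant admissibility estimates into a compact resolvent formula.
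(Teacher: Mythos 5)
Your high-level plan matches the paper's: view $\A$ as the feedback interconnection of the two impedance-passive regular tuples, appeal to~\cite{Pau19}, and reduce the resolvent estimate to bounds on $P_1$, $P_2$ and the `closed-loop' denominator $1+P_1(is)P_2(is)$. The paper delegates the Schur-complement bookkeeping to~\citel{Pau19}{Thm.~3.7} (stated here as Theorem~\ref{thm:ResolventAbstract}), so making it explicit is a legitimate alternative. However, your argument for $1+P_1(is)P_2(is)\ne 0$ contains a concrete error and, more seriously, omits the estimate that carries all the weight of the proof.

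First, the product $P_1(is)P_2(is)$ is \emph{not} purely imaginary: $P_1(is)=\coth(is)=-i\cot s$ is purely imaginary, but $P_2(is)$ has a nonzero real part (the paper proves $\re P_2(is)>0$ for all $s$), so $P_1(is)P_2(is)$ is a generic complex number and the claim ``this purely imaginary quantity cannot equal $-1$'' does not hold. Second, and this is the genuine gap, you only establish the \emph{upper} bound $|P_2(is)|=O(|s|^{-1/2})$. That alone cannot exclude $1+P_1(is)P_2(is)$ from vanishing, nor can it produce the rate $|s|^{1/2}$ in $\bigl|P_1(is)/(1+P_1(is)P_2(is))\bigr|$. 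What is actually needed is a quantitative \emph{lower} bound on the real part, $\re P_2(is)\ge c_0/(1+|s|^{1/2})$, which is exactly the hypothesis $\re P_2(is)\ge\nu(|s|)$ in Theorem~\ref{thm:ResolventAbstract} and the technical heart of the paper's proof (showing $\re\ell(is)\ge 1$ by a monotonicity argument on $\mu$, then bounding $\arg P_2(is)$ to conclude $\re P_2(is)>0$, then combining with the asymptotics~\eqref{eq:P2_asymp}). With $a=\re P_2(is)>0$, one has near $s=k\pi$ that $\im\bigl(1+P_1(is)P_2(is)\bigr)=-a\cot s$, which is what bounds $|1+P_1 P_2|$ away from zero there; and the size of $a$ relative to $|s|^{-1/2}$ is what produces the sharp rate. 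Without this lower bound your case distinction on $|s-k\pi|$ versus $|s|^{-1/2}$ cannot close, because in the regime $|s-k\pi|\asymp|s|^{-1/2}$ one only knows $|P_1 P_2|\lesssim 1$, which does not prevent $1+P_1P_2$ from being arbitrarily small.
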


%A precise description of $\gs(\A)$, or rather the part of the spectrum lying in a certain neighbourhood of the imaginary axis, will be given in the appendix. 
 The proof of Theorem~\ref{thm:Resolvent} is based on the following abstract result from~\cite{Pau19}, which we state in the special case where $A_1$ has compact resolvent and the eigenvalues of $A_1$ have a uniform gap (but are not necessarily simple).

\begin{theorem}[{\citel{Pau19}{Thm.~3.7}}]
\label{thm:ResolventAbstract}
Let $(A_1,B_1,C_1,D_1)$ and $(A_2,B_2,C_2,D_2)$ be impedance passive 
regular tuples on $Z_1$ and $Z_2$, respectively,  with $\CC^{m\times m}\ni D_1\geq 0$ and $D_2=0\in \CC^{m\times m}$.
Assume 
 that $A_1$ is skew-adjoint with compact resolvent and spectrum $\gs(A_1)=\setm{is_k}{k\in\Z}$, that the eigenvalues of $A_1$ satisfy $\inf_{k\neq l} \abs{s_k-s_l}>0$, and that
 the semigroup generated by $A_2$ is exponentially stable.
In addition, assume 
that there exists a constant $\gamma_0>0$ such that $\norm{C_1z}_{\CC^m}\geq \gg_0 \norm{z}_{Z_1}$ for all $z\in \ker(is_k-A_1)$ and $k\in\N$, and that there exists a non-increasing function
 $\nu:\R_+\to (0,1]$ such that 
\eq{
\re \iprod{ P_2(is)U}{U}_{\CC^m}
\geq \nu(\abs{s}) \norm{U}_{\CC^m}^2, \qquad U\in \CC^m,\ s\in\R.
}
Then the block operator $\A$ defined by
\eq{
  \A &= \pmat{A_{1,-1}&B_1C_{2\Lambda}\\-B_2C_{1\Lambda}&A_{2,-1}-B_2D_1C_{2\Lambda}},\\
  \Dom(\A)&= \Setm{\pmat{z_1\\z_2}\in Z_1^{B_1}\times Z_2^{B_2}}{ \begin{matrix}
       A_{1,-1}z_1+B_1 C_{2\Lambda}z_2\in Z_1,\\
   A_{2,-1}z_2
      -B_2 (C_{1\Lambda}z_1+D_1 C_{2\Lambda}z_2) \in Z_2
\end{matrix}
},
}
 satisfies
$i\R\subset\rho(\A)$ and 
  \eq{
    \norm{R(\i s,\A)}=O(\nu(\abs{s})^{-1}), \qquad s\to \pm\infty.
  }
\end{theorem}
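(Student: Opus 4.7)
The plan is to combine the impedance passivity of the two tuples with the spectral and observability hypotheses to obtain a quantitative dissipation estimate in terms of the feedback signal, and then argue by contradiction. Throughout, let $z=(z_1,z_2)^T\in\Dom(\A)$ and introduce the feedback signal $V:=C_{1\Lambda}z_1+D_1 C_{2\Lambda}z_2\in\CC^m$, so that the block form gives
\begin{equation*}
\A z=\pmat{A_{1,-1}z_1+B_1 C_{2\Lambda}z_2\\ A_{2,-1}z_2-B_2 V}\in\H.
\end{equation*}
Applying impedance passivity of $(A_1,B_1,C_1,D_1)$ with input $U_1=C_{2\Lambda}z_2$ and of $(A_2,B_2,C_2,0)$ with input $U_2=-V$, and adding, the cross terms cancel and yield $\re\iprod{\A z}{z}_\H\le 0$, confirming in particular that $\gs(\A)\subset\overline{\CC_-}$.

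For the quantitative step, if $(\i s-\A)z=f$ then the second row gives $z_2=R(\i s,A_{2,-1})(f_2-B_2 V)$ and hence $C_{2\Lambda}z_2 = C_{2\Lambda}R(\i s,A_{2,-1})f_2 - P_2(\i s)V$. Exponential stability of the semigroup generated by $A_2$ combined with admissibility of $B_2$ and $C_2$ yields uniform bounds for $R(\i s,A_{2,-1})$ in $\Lin(Z_2)$ as well as for $C_{2\Lambda}R(\i s,A_{2,-1})$ and $R(\i s,A_{2,-1})B_2$ along $\i\R$. Combining these with the impedance inequality of $(A_2,B_2,C_2,0)$ and the hypothesis $\re\iprod{P_2(\i s)V}{V}\ge \nu(|s|)\norm{V}^2$, I expect to derive the key estimate
\begin{equation*}
\nu(|s|)\norm{V}_{\CC^m}^2 \le C\bigl(\norm{f}_\H\norm{z}_\H + \norm{f}_\H^2\bigr),
\end{equation*}
with $C>0$ independent of $s\in\R$.

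Assume now for contradiction that the asserted resolvent bound fails: there exist $|s_n|\to\infty$ and unit vectors $z_n\in\Dom(\A)$ such that $f_n:=(\i s_n-\A)z_n$ satisfies $\nu(|s_n|)\norm{f_n}\to 0$. The previous inequality gives $\norm{V_n}\to 0$, and the formula for $z_{2,n}$ combined with the uniform admissibility bounds yields $\norm{z_{2,n}}\to 0$ and $\norm{C_{2\Lambda}z_{2,n}}\to 0$; hence $\norm{C_{1\Lambda}z_{1,n}}\to 0$.

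The principal obstacle is propagating this last conclusion into $\norm{z_{1,n}}\to 0$ via $(\i s_n-A_{1,-1})z_{1,n}=f_{1,n}+B_1 C_{2\Lambda}z_{2,n}$. Since $A_1$ is skew-adjoint with eigenvalues $\{\i s_k\}$ of uniform gap $\delta>0$, decompose $z_{1,n}$ spectrally; for each $n$ there is at most one index $k_n$ with $|s_n-s_{k_n}|<\delta/2$, and the sum of components of $z_{1,n}$ corresponding to the other eigenvalues satisfies a bound of order $\delta^{-1}(\norm{f_{1,n}}+\norm{B_1 C_{2\Lambda}z_{2,n}}_{Z_{1,-1}})\to 0$ after using $B_1$-admissibility. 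The remaining component $P_{k_n}z_{1,n}$ lies in the eigenspace $\ker(\i s_{k_n}-A_1)\subset\Dom(A_1)$, on which $C_{1\Lambda}$ agrees with $C_1$; combining the observability assumption $\norm{C_1\xi}\ge\gg_0\norm{\xi}$ on that eigenspace with $\norm{C_{1\Lambda}z_{1,n}}\to 0$ forces $\norm{P_{k_n}z_{1,n}}\to 0$. Summing the two contributions yields $\norm{z_{1,n}}\to 0$, contradicting $\norm{z_n}=1$. The same scheme applied with $f=0$ for each fixed $s\in\R$ gives injectivity of $\i s-\A$, and surjectivity follows from a Fredholm-type argument using compactness of $R(\mu,A_1)$ and boundedness of $R(\i s,A_2)$, so $\i\R\subset\rho(\A)$.
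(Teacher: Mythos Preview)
The paper does not give an independent proof of this statement: its entire argument is to identify the theorem as the special case $\Omega_\eps=\R$ of \cite[Thm.~3.7]{Pau19}, noting that condition~(2) of \cite[Thm.~3.5]{Pau19} then holds trivially and that, by \cite[Rem.~3.8]{Pau19}, the uniform gap hypothesis allows the auxiliary functions $\gd$ and $\gamma$ there to be taken constant. Your proposal is therefore a genuinely different route---a direct contradiction argument rather than a citation---and its overall strategy is sound, but two steps are not in order.

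First, the contradiction hypothesis is misstated. Negating $\norm{R(is,\A)}=O(\nu(|s|)^{-1})$ produces unit vectors $z_n\in\Dom(\A)$ with $\nu(|s_n|)^{-1}\norm{f_n}\to 0$, not $\nu(|s_n|)\norm{f_n}\to 0$; since $\nu\le 1$ the latter is strictly weaker and does not even force $\norm{f_n}\to 0$, so nothing downstream would follow. With the corrected hypothesis the key estimate (whose second term is actually $\nu(|s|)^{-1}\norm{f}^2$ after Young's inequality, not $\norm{f}^2$) does yield $\norm{V_n}\to 0$ as you intend.

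Second, and more substantially, the passage from $\norm{C_{1\Lambda}z_{1,n}}\to 0$ to $\norm{P_{k_n}z_{1,n}}\to 0$ is incomplete. Since $C_{1\Lambda}$ is unbounded on $Z_1$, knowing $\norm{(I-P_{k_n})z_{1,n}}_{Z_1}\to 0$ does not by itself give $C_{1\Lambda}(I-P_{k_n})z_{1,n}\to 0$, which is what is needed to isolate $C_1P_{k_n}z_{1,n}$ and invoke the observability bound. One must control $C_{1\Lambda}$ composed with the restricted resolvent on the two pieces separately: the $f_{1,n}$-part is handled by $C_1$-admissibility together with the convergent sum $\sum_{k\ne k_n}|s_n-s_k|^{-2}$, but the $B_1u_n$-part amounts to the pole-removed transfer function $P_1(is_n)-D_1-(is_n-is_{k_n})^{-1}C_1P_{k_n}B_1$ applied to $u_n$, and a \emph{uniform} bound on this quantity along $i\R$ is exactly what is required. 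Such a bound does follow from admissibility of $B_1$ and $C_1$ combined with the uniform gap, but it is a genuine additional argument, not an appeal to ``$B_1$-admissibility'' alone. A related issue is your off-resonant estimate $\delta^{-1}\norm{B_1C_{2\Lambda}z_{2,n}}_{Z_{1,-1}}$: the $Z_{1,-1}$-norm weights the $k$th mode by $(1+s_k^2)^{-1}$, which is not comparable with $|s_n-s_k|^{-2}$ when $|s_n|$ is large, so this bound is false as stated; the correct bound is directly in terms of $\norm{C_{2\Lambda}z_{2,n}}_{\CC^m}$, using the uniform bound $\sup_k\norm{P_kB_1}<\infty$ that admissibility provides.
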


\begin{proof}
The theorem was proved in more general form in~\citel{Pau19}{Thm.~3.7}. 
The regular tuples $(A_1,B_1,C_1,D_1)$ and $(A_2,B_2,C_2,D_2)$ correspond to $(A_c,B_c,C_c,D_c)$ and $(A,B,C,D)$ in~\cite{Pau19}, respectively.
Similarly, the transfer functions $P_1$ and $P_2$ correspond to the transfer functions $G$ and $P$ in~\cite{Pau19}.
The current statement follows from~\citel{Pau19}{Thm.~3.7} if we let $\Omega_\eps=\R$, in which case the 
  required condition~(2) of~\citel{Pau19}{Thm.~3.5} is trivially satisfied.
The non-increasing function $\nu$ corresponds  to the function $\eta$ in~\citel{Pau19}{Thm.~3.7}.
Moreover, as explained in~\citel{Pau19}{Rem.~3.8}, the assumption  that the eigenvalues of $A_1$ satisfy the uniform gap condition $\inf_{k\neq l} \abs{s_k-s_l}>0$ implies that it is possible to choose $\gd$ and $\gamma$ in~\citel{Pau19}{Thm.~3.7} to be constant functions.
With these choices~\citel{Pau19}{Thm.~3.7} implies that $i\R\subset \rho(\A)$ and there exists a constant $M>0$ such that $\norm{R(is,\A)}\leq M /\nu(\abs{s})$ for all $s\in\R$.
\end{proof}

We begin by showing that the semigroup generated by the operator $A_2= L_2\vert_{\ker(G_2)}:D(A_2)\subset Z_2\to Z_2$
introduced in Section~\ref{subsecCG} is exponentially stable whenever condition~\eqref{condmu} is satisfied.

\begin{lemma}
\label{lem:GPBCSEllipticProblem-bis-expstab}
If condition \eqref{condmu} holds, the contraction semigroup generated by $A_2$ is exponentially  stable.
\end{lemma}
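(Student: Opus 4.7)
The strategy is to apply the Gearhart--Prüss theorem: since $(e^{tA_2})_{t\ge 0}$ is a contraction semigroup on the Hilbert space $Z_2$, exponential stability is equivalent to $i\R\subset \rho(A_2)$ together with $\sup_{s\in\R}\norm{R(is,A_2)}<\infty$. For $\lambda\in\overline{\CC_+}\setminus\{0\}$ and $\hat z=(\hat w,\hat\eta)^T\in Z_2$, I would solve the resolvent equation $(\lambda-A_2)z=\hat z$ following the method of the proof of Lemma~\ref{lem:GPBCSEllipticProblem-bis}. Integrating the second component equation $\lambda \eta+\eta_s-w=\hat\eta$ with $\eta(\cdot,0)=0$ yields
\[
\eta(x,s) = \frac{1-e^{-\lambda s}}{\lambda}w(x) + \hat\xi(x,s),\qquad \hat\xi(x,s) := \int_0^s e^{-\lambda(s-\sigma)}\hat\eta(x,\sigma)\,d\sigma,
\]
so that $\phi=\ell(\lambda)w+\tilde\xi$ with $\tilde\xi(x):=\int_0^\infty \mu(s)\hat\xi(x,s)\,ds$ depending only on $\hat\eta$. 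Substituting into $\lambda w-\phi''=\hat w$ reduces the problem to an elliptic boundary value problem for $w$ with the Dirichlet condition $w(1)=0$ and a Robin-type condition $\ell(\lambda)w'(0)=-\tilde\xi'(0)$ at $x=0$.

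For $\lambda=is$ with $s\in\R$, testing the elliptic equation against $\overline w$, integrating by parts and taking real parts yields an inequality of the form
\[
\operatorname{Re}(\ell(is))\,\norm{w'}_{L^2(0,1)}^2 \lesssim \norm{\hat z}_{Z_2}\norm{w}_{H^1_r(0,1)} + |\tilde\xi'(0)|\,|w(0)|.
\]
The key observation is that $\ell(\lambda)=1+\hat g(\lambda)$, where $g(s)=\int_s^\infty\mu(r)\,dr$ is non-negative, non-increasing, integrable and \emph{convex} (since $g''=-\mu'\ge 0$); it is classical (a Pólya-type argument) that the Fourier cosine transform of such a function is non-negative, and consequently $\operatorname{Re}(\ell(is))\ge 1$ uniformly in $s\in\R$. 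Combined with the Poincaré inequality this gives a uniform $H^1_r$-bound on $w$, provided the auxiliary quantities built from $\hat\eta$ can be controlled by $\norm{\hat\eta}_\M$ independently of $s$.

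This last control, together with the $\M$-bound on $\eta$, is where condition \eqref{condmu} enters essentially. Young's inequality for convolutions, combined with the pointwise estimate $\mu(t+\sigma)\le Ce^{-\delta t}\mu(\sigma)$ from \eqref{condmu}, yields $\norm{\hat\xi}_\M\lesssim\norm{\hat\eta}_\M$ uniformly in $\lambda\in i\R$, and analogous arguments bound $\tilde\xi$ in $H^1_r(0,1)$ together with the trace $|\tilde\xi'(0)|$ by $\norm{\hat\eta}_\M$. Moreover, \eqref{condmu} implies $\int_0^\infty s^2\mu(s)\,ds<\infty$, and combined with the elementary bound $|1-e^{-is\sigma}|/|s|\le\min(\sigma,2/|s|)$ this controls the first summand in the representation of $\eta$ in $\M$ by $\norm{w}_{H^1_r}$. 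Assembling these estimates, and handling $\lambda=0$ by a direct adaptation of Lemma~\ref{lem:GPBCSEllipticProblem-bis}, one obtains $\norm{z}_{Z_2}\lesssim\norm{\hat z}_{Z_2}$ uniformly in $s\in\R$, and Gearhart--Prüss then yields exponential stability. The main technical obstacle is the careful uniform-in-$s$ bookkeeping of the memory-induced auxiliary terms, which is precisely what the exponential-decay content of \eqref{condmu} makes possible.
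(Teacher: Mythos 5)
Your approach is correct in outline but genuinely different from the paper's, and it is worth spelling out the contrast. The paper never touches the function $\ell$ in this lemma: it proves the uniform resolvent bound by a pure energy estimate. Testing $(ir-A_2)z=\hat z$ against $z$ in $Z_2$ gives $\|w\|_{H^1_r(0,1)}^2\le\|z\|_{Z_2}\|\hat z\|_{Z_2}$ from dissipativity. The missing control on $\|\eta\|_\M$ is then obtained by introducing the auxiliary space $\mathcal N=L^2_g(0,\infty;H^1_r(0,1))$ and pairing the memory equation with $\eta$ in $\mathcal N$: since $g'=-\mu$, the identity $-\re\iprod{T\eta}{\eta}_\mathcal N=\tfrac12\|\eta\|_\M^2$ converts the memory dissipation directly into the quantity one needs, while condition~\eqref{condmu} supplies the continuous embedding $\M\subset\mathcal N$ (via the equivalence with $g\le\Theta\mu$). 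This avoids any explicit solution of the resolvent equation and any analysis of the symbol $\ell(is)$, and it gives the clean bound $\|z\|_{Z_2}\le(1+8\Theta)\|\hat z\|_{Z_2}$. Your frequency-domain route instead solves the ODE system, reduces to a scalar elliptic boundary value problem, and leans on $\re\ell(is)\ge1$; that bound is indeed true for the reason you give (the cosine transform of the convex, decreasing, integrable $g$ is non-negative, which is exactly what the paper proves later in the proof of Theorem~\ref{thm:Resolvent}, via $\int_0^\infty\mu(r)\sin(rs)\,\d r\ge0$), but it is not needed in the paper's proof of this lemma. So the energy argument is shorter, self-contained, and postpones all symbol analysis to the point where it is actually required.

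Two small cautions about the execution of your version. First, the intermediate inequality should not contain the boundary term $|\tilde\xi'(0)|\,|w(0)|$: if you integrate by parts on $\phi$ directly (using $\phi\in H^2(0,1)$, $\phi'(0)=0$, $w(1)=0$) you obtain
\begin{equation*}
\lambda\|w\|_{L^2(0,1)}^2+\ell(\lambda)\|w'\|_{L^2(0,1)}^2+\int_0^1\tilde\xi'\,\overline{w'}\,\d x=\int_0^1\hat w\,\overline w\,\d x,
\end{equation*}
so the boundary contributions cancel exactly through the Robin condition. This is not merely cosmetic: $\tilde\xi'\in L^2(0,1)$ only (since $\hat\eta$ only gives $\tilde\xi\in H^1_r(0,1)$), so the trace $\tilde\xi'(0)$ need not be defined, and a proof that genuinely relied on it would have a gap. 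Relatedly, you should not split $\phi''$ into $\ell(\lambda)w''+\tilde\xi''$, since neither $w$ nor $\tilde\xi$ is individually guaranteed to lie in $H^2(0,1)$; only $\phi$ is. Second, the uniform a priori bound alone does not immediately yield $i\R\subset\rho(A_2)$; as the paper notes, one should combine it with the observation that $\gs(A_2)\cap i\R$ lies in the approximate point spectrum (because $\CC_+\subset\rho(A_2)$ by contractivity), and the a priori bound rules out approximate eigenvalues on $i\R$. With these adjustments your proof goes through.
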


\begin{proof}
Recall that $A_2$ is the infinitesimal generator
of a contraction semigroup
on $Z_2=L^2(0,1)\times \M$, as was shown in the proof of Proposition~\ref{prp:RLSCGPart}.
We prove that $i\R\subset \rho(A_2)$ and
$\sup_{r\in\R}\norm{R(ir,A_2)}<\infty$. The claim then follows from the Gearhart-Pr\"uss theorem~\citel{EngNag00book}{Thm.~V.1.11}.
To this end we begin by introducing the space
$\mathcal{N} = L^2_g(0,\infty; H^1_r(0,1))$
of  $H^1_r(0,1)$-valued functions on $(0,\infty)$ which are square-integrable with respect to the measure $g(s)\d s$,
endowed with the inner product
$$\langle\eta,\xi\rangle_\mathcal{N}=\int_0^\infty g(s) \langle\eta(s),\xi(s)\rangle_{H^1_r(0,1)}\, \d s.
$$
By \citel{GatMir08}{Rem.\ 2.3}, condition  \eqref{condmu} is equivalent to the estimate $g(s) \leq \Theta \mu(s)$
for some $\Theta>0$ and all $s>0$. It follows that
\begin{equation}
\label{cont-inclus}
\|\eta\|_\mathcal{N} \leq \sqrt{\Theta} \|\eta\|_\mathcal{M},\qquad \eta \in \mathcal{M},
\end{equation}
and hence $\M \subset \mathcal{N}$ with continuous inclusion.
Next, given $\hat z  = (\hat w, \hat \eta)^T\in Z_2$ and $r \in\R$, we consider the resolvent equation
$(ir - A_2) z = \hat z$
in the unknown $z=(w,\eta)^T\in \Dom(A_2)$.
Taking the inner product of this equation with $z$ in $Z_2$ and using $w(1)=\phi'(0)=0$ and \eqref{Teta}, we get
$$
\|w\|_{H^1_r(0,1)}^2 - \frac12 \int_0^\infty \mu'(s) \|\eta(s)\|_{H^1_r(0,1)}^2\, \d s  =
\re \l (ir - A_2 ) z , z \r_{Z_2} = \re \l \hat z , z \r_{Z_2}.
$$
Since $\mu'(s)\leq0$ for almost all $s>0$, we have
\begin{equation}
\label{w_eq}
\|w\|_{L^2(0,1)}^2  \leq \|w\|_{H^1_r(0,1)}^2 \leq \|z\|_{Z_2} \|\hat z\|_{Z_2}.
\end{equation}
The resolvent equation may be rewritten in component form as
$$\left\{\begin{aligned}
i r w(x) - \phi''(x) &=  \hat w(x),& x\in(0,1),\\
i r \eta(x,s) - T \eta(x,s) - w(x) &= \hat \eta(x,s), \quad&x\in(0,1),\ s>0.
\end{aligned}\right.$$
Recalling that $\M \subset \mathcal{N}$, we may take the inner product in $\mathcal{N}$ of the second equation above with $\eta$.
Taking the real part of the resulting expression, we obtain
\begin{equation}
\label{mix}
-\re \l T\eta, \eta\r_{\mathcal{N}} =  \re \l w, \eta\r_{\mathcal{N}} + \re\l \hat \eta, \eta\r_{\mathcal{N}}.
\end{equation}
Integrating by parts with respect to $s$ and employing a limiting argument
(cf.~\eqref{Teta}) yields
$$
-\re \l T\eta,\eta\r_{\mathcal{N}}
=- \frac12 \int_0^\infty g'(s) \|\eta(s)\|_{H^1_r(0,1)}^2\, \d s
=\frac12 \|\eta\|_\M^2.
$$
Hence,  \eqref{cont-inclus}, \eqref{w_eq} and \eqref{mix} imply that
\begin{align*}
\frac12\|\eta\|_\M^2 \leq
 \sqrt{\Theta}\|w\|_{H^1_r(0,1)}\|\eta\|_\M + \Theta \|\hat \eta\|_\M \|\eta\|_\M
\leq \frac14 \|\eta\|_\M^2 + 2\Theta \|z\|_{Z_2} \|\hat z\|_{Z_2},
\end{align*}
and combining this with \eqref{w_eq} we readily arrive at
\begin{equation}
\label{boundbound}
\|z\|_{Z_2} \leq (1+8\Theta) \|\hat z\|_{Z_2}.
\end{equation}
The desired result now follows at once. Indeed,
since $A_2$ the generator of a contraction semigroup
on $Z_2$, we have $\CC_+\subset \rho(A_2)$. Hence $\sigma(A_2)\cap i\R$ is contained in
the topological boundary of $\sigma(A_2)$, and thus in the approximate point spectrum of
$A_2$. However, \eqref{boundbound} shows that no purely imaginary number can be an approximate
eigenvalue of $A_2$, since otherwise there would exist a sequence of unit vectors $z_n\in D(A_2)$ with
$(\i r -A_2) z_n\to0$ in $Z_2$ as $n\to\infty$, which contradicts \eqref{boundbound}. It follows that $i\R\subset \rho(A_2)$,
and now \eqref{boundbound} yields the bound $\sup_{r\in\R}\norm{R(ir,A_2)}\leq 1+8\Theta$.
\end{proof}

We are now in a position to prove Theorem \ref{thm:Resolvent}.

\begin{proof}[Proof of Theorem~\textup{\ref{thm:Resolvent}}]
By Propositions~\ref{prp:RLSWavePart} and~\ref{prp:RLSCGPart}, 
$(A_k,B_k,C_k,D_k)$ for $k=1,2$ are impedance passive regular tuples with 
$D_1=1\in\CC$ and $D_2=0\in \CC$. Moreover, by Proposition~\ref{prp:RLSWavePart} the operator $A_1$ is skew-adjoint with compact resolvent and spectrum $\gs(A_1)=\setm{ik\pi}{k\in\Z}$ consisting of simple eigenvalues. Furthermore, $\abs{C_1\psi}=\norm{\psi}$ for all $\psi\in \ker(ik\pi -A_1)$ and $k\in\Z$.
By Lemma~\ref{lem:GPBCSEllipticProblem-bis-expstab} the semigroup generated by $A_2$ is exponentially stable. Due to the structure of $\A$ described in Theorem~\ref{thm:SG} we may derive the desired resolvent estimate using Theorem~\ref{thm:ResolventAbstract} provided we can find a non-increasing function
 $\nu:\R_+\to (0,1]$ such that $\re P_2(is)\geq \nu(\abs{s})$ for all $s\in\R$. We shall show that there exists a constant $c_0\in(0,1]$ such that
\eqn{
\label{eq:ReP2lowerbound}
\re P_2(is)\ge\frac{c_0}{1+|s|^{1/2}},\qquad s\in\R.
}
We begin by observing that by exponential stability of the semigroup generated by $A_2$ the transfer function $P_2$ of
 the Coleman--Gurtin part $(A_2,B_2,C_2,D_2)$ extends analytically across the imaginary axis and, in particular, satisfies
$$P_2(is)
= \frac{\tanh\sqrt{is/\ell(is)}}{\sqrt{is \ell(is)}}, \qquad s\neq 0,
$$
where we recall that, for $s\ne0$,
\begin{equation}\label{eq:ell_mu}
\ell(is)=1+\frac{1}{is}\int_0^\infty\mu(r)(1-\e^{-is r})\,\d r.
\end{equation}
Integration by parts yields
\begin{equation}\label{eq:ell_g}
\ell(is)=1+\int_0^\infty g(r)\e^{-isr}\,\d r.
\end{equation}
This expression shows in particular  that we may indeed define $\ell(is)$ and hence $P_2(is)$ in a natural way also for $s=0$, by setting $\ell(0)=2$ and $P_2(0)=1/2$. In particular, both $s\mapsto \ell(is)$ and $s\mapsto P_2(is)$ are continuous on $\R$. We now prove that $\re\ell(is)\ge1$ for all $s\in\R$. Note first that $\re\ell(is)=\re\ell(-is)$ for all $s\in\R$ and, as has just been noted, that $\ell(0)=2$. For $s>0$, we see from \eqref{eq:ell_mu} that
$$\begin{aligned}
\re\ell(is) &=1+\frac1s\int_0^\infty\mu(r)\sin(rs)\,\d r=1+\frac1s\sum_{n=0}^\infty\int_{2n\pi/s}^{2(n+1)\pi/s}\mu(r)\sin(rs)\,\d r\\&=1+\frac1s\sum_{n=0}^\infty\int_{0}^{\pi/s}\left(\mu\left(\frac{2n\pi}{s}+r\right)-\mu\left(\frac{(2n+1)\pi}{s}+r\right)\right)\sin(rs)\,\d r.
\end{aligned}$$
By monotonicity of $\mu$ and non-negativity of $\sin(t)$ for $0\le t\le\pi$ all of the integrands are non-negative, and hence $\re\ell(is)\ge1$ for all $s\in\R$.
Next we prove the asymptotic estimate
\begin{equation}\label{eq:P2_asymp}
P_2(is)=\frac{1\mp i}{\sqrt{2}|s|^{1/2}}+O(|s|^{-3/2}),\qquad s\to\pm\infty.
\end{equation}
Note first that, by integrability of $\mu$, $\ell(is)=1+O(|s|\inv)$ and hence also $\ell(is)\inv=1+O(|s|\inv)$ as $|s|\to\infty$. Thus
\begin{equation}\label{eq:est1}
\sqrt{is/\ell(is)}=\sqrt{is(1+O(|s|\inv))}=|s|^{1/2}\frac{1\pm i}{\sqrt{2}}(1+O(|s|\inv)),\qquad s\to\pm\infty,
\end{equation}
and similarly
\begin{equation}\label{eq:est2}
\frac{1}{\sqrt{is\ell(is)}}=\left(|s|^{1/2}\frac{1\pm i}{\sqrt{2}}(1+O(|s|\inv))\right)\inv=\frac{1\mp i}{\sqrt{2}|s|^{1/2}}(1+O(|s|^{-1}))
\end{equation}
as $s\to\pm\infty$. The estimate \eqref{eq:est1} yields
$$\tanh\sqrt{is/\ell(is)}=1-\frac{2}{\e^{2\sqrt{is/\ell(is)}}+1}=1+O\Big(\e^{-\sqrt{2}|s|^{1/2}}\Big),\qquad |s|\to\infty,$$
and combining this with \eqref{eq:est2} we quickly obtain \eqref{eq:P2_asymp}. It follows that \eqref{eq:ReP2lowerbound} holds for some $c_0\in(0,1]$ and for $|s|$ sufficiently large. Hence by continuity of the map $s\mapsto\re P_2(is)$ on $\R$ it suffices, in order to prove \eqref{eq:ReP2lowerbound}, to show that $\re P_2(is)>0$ for all $s\in \R$. First, from \eqref{eq:ell_g} we see that
$$\mathrm{Im}\,\ell(is)=-\int_0^\infty g(r)\sin(rs)\,\d r,\qquad s\in\R,$$
which implies, in particular, that
$$|\mathrm{Im}\,\ell(is)|<\int_0^\infty g(r)\,\d r=1,\qquad s\in\R.$$
Let us denote by $\Sigma$ the sector $\{z\in\mathbb{C}\setminus\{0\}:|\arg z|<\pi/4\}$. Since $\re{\ell(is)}\ge1$ and $\abs{\im{\ell(s)}}<1$ we see that $\ell(is)\in\Sigma$ for all $s\in\R$, and because $\Sigma$ is invariant under the inversion $z\mapsto z\inv$ we also have $\ell(is)\inv\in\Sigma$ for all $s\in\R$.  Fix $s>0$ and let $\theta=\arg\ell(is)$. Here and in what follows we take $\arg $ to be the principal value of the argument, so that $|\arg \gl|\le\pi$ for all $\gl\in\CC$. Then $\arg\sqrt{is/\ell(is)}=\frac\pi4-\frac\theta2$ and  $\arg\sqrt{is\ell(is)}=\frac\pi4+\frac\theta2$. Let $a,b>0$ be such that $\sqrt{is/\ell(is)}=a+ib$. Then
$$\tanh\sqrt{is/\ell(is)}=\frac{\sinh(2a)+i\sin(2b)}{\cosh(2a)+\cos(2b)}.$$
Using that $|\sin(2b)|<2b$ and $\sinh(2a)>2a$ together with monotonicity of the arctangent, we find that
$$\big|\arg\tanh\sqrt{is/\ell(is)}\big|=\left|\tan\inv\left(\frac{\sin(2b)}{\sinh(2a)}\right)\right|< \tan\inv\left(\frac{b}{a}\right)=\arg \sqrt{is/\ell(is)}=\frac\pi4-\frac\theta2.$$
Since $\arg P_2(is)=\arg \tanh\sqrt{is/\ell(is)}-\arg\sqrt{is\ell(is)}$, we obtain $-\pi/2<\arg P_2(is)<\pi/4$. In particular, $\re P_2(is)>0$. An analogous argument applies when $s<0$, and thus there exists $c_0\in(0,1]$ such that~\eqref{eq:ReP2lowerbound} holds.
Hence if we let
 $\nu(r) = c_0/(1+\sqrt{r})$ for $r\geq0$, then~\eqref{eq:ReP2lowerbound} yields 
$\re P_2(is)\geq \nu(\abs{s})$ for all $s\in\R$.
It follows from Theorem~\ref{thm:ResolventAbstract} that  $ \gs(\A)\subset \CC_-$ and $\norm{R(is,\A)} = O(\abs{s}^{1/2})$ as $s\to\pm \infty$, as required.
 \end{proof}

\subsection{Optimality}\label{subsec:ResolventEstimates2}

\noindent
The following result shows that the resolvent estimate
in Theorem \ref{thm:Resolvent} is optimal.

\begin{theorem}
\label{stima basso}
Suppose that $\gs(\A)\subset\CC_-$.
Then
$$
\limsup_{s \to \infty} s^{-1/2}\|R(\i s,\A)\|>0.
$$
\end{theorem}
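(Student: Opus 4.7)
The plan is to exhibit, for each large integer $k\geq 1$, a unit vector $F_k\in\H$ such that the unique solution $Z_k=R(is_k,\A)F_k\in D(\A)$ of the resolvent equation at $s_k=k\pi$ satisfies $\|Z_k\|_\H\gtrsim s_k^{1/2}$. Letting $k\to\infty$ will then yield the claim. The underlying heuristic is that at $s=k\pi$ the wave operator $A_1$ is resonant, and the only mechanism preventing blow-up is the weak damping provided by the Coleman--Gurtin part; since $|P_2(is)|$ decays only like $|s|^{-1/2}$ by the asymptotic \eqref{eq:P2_asymp}, the resulting amplification factor in $R(is_k,\A)$ is of order $s_k^{1/2}$.

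Concretely, I would take $F_k=(0,f_k,0,0)^T$ with $f_k(x)=\sqrt{2}\sin(k\pi x)$, so that $\|F_k\|_\H=1$. Writing $Z_k=(u_k,v_k,w_k,\eta_k)^T$, the identity $(is_k-\A)Z_k=F_k$ decouples into two explicitly solvable sub-problems linked by the transmission conditions $v_k(0)=w_k(0)$ and $u_k'(0)=\phi_k'(0)$. From the wave block one obtains $v_k=is_ku_k$ and the ODE $u_k''+s_k^2u_k=-f_k$ with $u_k(-1)=0$, whose general solution takes the form $u_k(x)=A_k\sin(s_k(x+1))+u_{p,k}(x)$ for an explicit particular solution which (for this specific choice of $f_k$) satisfies $u_{p,k}(0)=1/(\sqrt{2}\,s_k)$ and $u_{p,k}'(0)=0$. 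From the Coleman--Gurtin block, arguing exactly as in the proof of Proposition~\ref{prp:RLSCGPart}, one gets $w_k(x)=C_k\sinh(\sqrt{is_k/\ell(is_k)}(1-x))$ together with $\phi_k'(0)=-w_k(0)/P_2(is_k)$ and $\eta_k(x,\tau)=w_k(x)(1-e^{-is_k\tau})/(is_k)$. Using $\sin(k\pi)=0$ and $\cos(k\pi)=(-1)^k$, the transmission conditions then force $w_k(0)=i/\sqrt{2}$ and
$$A_k=-\frac{(-1)^k i}{\sqrt{2}\,s_k\,P_2(is_k)}.$$

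At this point the asymptotic estimate \eqref{eq:P2_asymp} gives $|P_2(is_k)|=s_k^{-1/2}+O(s_k^{-3/2})$, so $|A_k|\sim 1/\sqrt{2s_k}$ as $k\to\infty$. Writing $u_k'(x)=A_ks_k\cos(s_k(x+1))+u_{p,k}'(x)$, a direct calculation shows that $u_{p,k}'$ is uniformly bounded in $L^2(-1,0)$ independently of $k$, whereas the resonant term $A_ks_k\cos(s_k(x+1))$ has $L^2$-norm comparable to $|A_k|s_k\sim s_k^{1/2}$. By the reverse triangle inequality this forces $\|u_k\|_{H^1_l(-1,0)}=\|u_k'\|_{L^2(-1,0)}\gtrsim s_k^{1/2}$ for all sufficiently large $k$, and therefore $\|Z_k\|_\H\geq\|u_k\|_{H^1_l(-1,0)}\gtrsim s_k^{1/2}$. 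The bound $\|R(is_k,\A)\|\geq\|Z_k\|_\H/\|F_k\|_\H\gtrsim s_k^{1/2}$ then yields the required $\limsup$ lower bound.

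The main obstacle is the algebraic bookkeeping in the coupled solve for $A_k$ and $C_k$: one must combine the trigonometric identities at $s=k\pi$ with the asymptotics of $P_2(is_k)$ from \eqref{eq:P2_asymp}, and in particular exploit the delicate cancellation $u_{p,k}'(0)=0$ specific to our choice of $f_k$, so as to confirm that the resonant coefficient $A_k$ is genuinely of order $s_k^{-1/2}$ rather than smaller. The forcing $f_k=\sqrt{2}\sin(k\pi x)$ is chosen precisely so that this cancellation occurs, while $u_{p,k}(0)\neq 0$ remains the dominant contribution to $A_k$ through the transmission condition $v_k(0)=w_k(0)$.
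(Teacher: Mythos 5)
Your proposal is correct and follows essentially the same strategy as the paper's proof: you exhibit a sequence of unit forcings resonant with the wave part (at frequencies $k\pi$ rather than the paper's $2\pi n$, and placed only in the $v$-component rather than in $(u,v)$), solve the coupled resolvent equation explicitly, and use the smallness $|P_2(is_k)|\sim s_k^{-1/2}$ of the Coleman--Gurtin transfer function at the interface to force the homogeneous wave coefficient $A_k$ to satisfy $|A_k|s_k\sim s_k^{1/2}$, which dominates the $L^2$-norm of $u_k'$. The paper packages the wave ODE solve via the characteristic variables $U_n^\pm$ and re-derives the Coleman--Gurtin boundary relation directly, whereas you invoke the $P_2$ asymptotics from the proof of Theorem~\ref{thm:Resolvent}; this is a legitimate streamlining, but the mechanism and the resulting lower bound are identical. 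One small point of care: the identity $u_{p,k}'(0)=0$ is not an automatic consequence of the choice of $f_k$ but a normalization of the particular solution (add a suitable multiple of $\sin(s_k(x+1))$, which is harmless since it vanishes at $x=0$); without that normalization $u_{p,k}'(0)=1/(\sqrt{2}\,s_k)=O(s_k^{-1})$, which is anyway negligible against $1/P_2(is_k)\sim s_k^{1/2}$, so the conclusion is unaffected.
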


\begin{proof}
For $n\ge1$ let
$\hat z_n = ( \hat u_n, \hat v_n, 0, 0 )^T \in \H$, where
$\hat u_n(x) = \sin (2 \pi n x)/2 \pi n$ and $\hat v_n(x)= \cos (2 \pi n  x)$.
In particular,  $\|\hat z_n\|_\H =1$ for all $n\ge1$.
Since $\gs(\A)\subset\CC_-$ by assumption, the equation
$(2 \pi n \i - \A) z_n = \hat z_n$
has a unique solution
$z_n = (u_n,v_n,w_n,\eta_n) ^T\in \Dom(\A)$ for each $n\ge1$.
The components satisfy the system
\begin{equation}\label{R1}
\left\{\begin{aligned}
2 \pi n \i\, u_n (x)- v_n(x) &= \hat u_n(x), &x\in(-1,0),\\
2 \pi n \i\, v_n(x) - u_{n}''(x) &= \hat v_n(x), &x\in(-1,0),\\
 2 \pi n \i\, w_n(x) -  \phi_{n}''(x) &= 0,&x\in(0,1),\\
 2 \pi n \i\, \eta_n(x,s) - T \eta_n(x,s) - w_n(x)&=0,&x\in(0,1),\ s>0,
\end{aligned}\right.
\end{equation}
where, as before,
$$
\phi_n (x)= w_n(x) + \int_0^\infty \mu(s) \eta_n(x,s) \,\d s,\qquad x\in(0,1).
$$
Let us introduce the auxiliary functions
$U_n^{\pm} = \frac12 (v_n \pm u_{n}')$ on $(-1,0)$. Then, using the first two equations in~\eqref{R1},
it is readily seen that
\begin{align*}
(U_{n}^{+})'(x) =  2 \pi n\i \, U_n^{+}(x) - \cos (2 \pi n  x),\qquad
(U_{n}^{-})'(x) =  -2 \pi n\i \, U_n^{-}(x)
\end{align*}
for all $x\in(-1,0),$
and solving these subject to  $U_n^{+}(-1) + U_n^{-}(-1)=v_n(-1)=0$ yields
\begin{align*}
 U_n^{+}(x) &= \e^{2 \pi n \i (x+1)} U_n^{+}(-1) -\int_{-1}^x \e^{2 \pi n \i (x-\tau)}\cos (2 \pi n  \tau)\, \d \tau,\\
 U_n^{-}(x) &= -\e^{-2 \pi n \i (x+1)} U_n^{+}(-1)
\end{align*}
for all $x\in(-1,0).$ Since $v_n=U_n^++U_n^-$ and $u_n'=U_n^+-U_n^-$, it follows that
\begin{align}
\label{vn}
v_n(x) 
&= \big(\e^{2 \pi n \i (x+1)} -\e^{-2 \pi n \i (x+1)}\big) U_n^{+}(-1)
-\int_{-1}^x \e^{2 \pi n \i (x-\tau)} \cos (2 \pi n  \tau) \,\d \tau,\\
\label{un}
u_{n}'(x) &= \big(\e^{2 \pi n \i (x+1)} +\e^{-2 \pi n \i (x+1)}\big) U_n^{+}(-1)
-\int_{-1}^x \e^{2 \pi n \i (x-\tau)} \cos (2 \pi n  \tau) \,\d \tau
\end{align}
for all $x\in(-1,0)$. In particular, \eqref{vn} yields
\begin{equation}\label{boundvn}
\begin{aligned}
\|v_n\|_{L^2(-1,0)}^2 &\geq \frac12
\int_{-1}^0 \big|\big(\e^{2 \pi n \i (x+1)} -\e^{-2 \pi n \i (x+1)}\big) U_n^{+}(-1)\big|^2\, \d x \\
&\qquad- \int_{-1}^0 \bigg|\int_{-1}^x \e^{2 \pi n \i (x-\tau)} \cos (2 \pi n  \tau) \,\d \tau\bigg|^2\, \d x\geq |U_n^+(-1)|^2-\frac13.
\end{aligned}
\end{equation}
It moreover follows from \eqref{vn} and \eqref{un} that $v_n(0)=-1/2$ and $u_{n}'(0) =2U_n^{+}(-1)-1/2$.
Integrating the fourth equation in \eqref{R1} and using the fact that $\eta_n(x,0)=0$ yields
$$
\eta_n(x,s) = \frac{1-\e^{-2\pi n \i s}}{2 \pi n \i} w_n(x),\qquad x\in(0,1),\ s>0.
$$
Hence $\phi_n = \alpha_n w_n$, where
\begin{equation}
\label{phin}
\alpha_n = 1 + \frac{1}{2 \pi n \i}\left(\kappa - \int_0^\infty \mu(s)\e^{-2\pi n \i s} \,\d s\right)
\end{equation}
with $\kappa=\int_0^\infty\mu(s)\,\d s$, and in particular $\phi_{n}'(0)= \alpha_n w_{n}'(0).$
Note also that $\alpha_n \to 1$ as $n\to\infty$, so by considering only sufficiently large values of $n\ge1$ we may assume that $\alpha_n\ne0$.
Now using \eqref{phin} in the third equation in \eqref{R1} we find that
$$
w_{n}'(x) = \frac{2 \pi n \i}{\alpha_n} \int_0^x w_n(\tau) \,\d \tau + w_{n}'(0),\qquad x\in(0,1).
$$
Let us set
$\sigma_n = \sqrt{2 \pi n \i/\alpha_n}$
and 
$$W_n^{\pm}(x) = \frac12 \left(w_n(x) \mp \sigma_n\int_0^x w_n(\tau) \,\d \tau\right),\qquad x\in(0,1).$$ 
Then
$(W_{n}^{\pm})' =\frac12 w_{n}'(0) \mp \sigma_n W_n^{\pm}$, and solving these differential equations  subject to $W_n^{+}(1) + W_n^{-}(1)=w_n(1)=0$ yields
\begin{align*}
 W_n^{+}(x) &= \e^{-\sigma_n(x-1)} W_n^+(1) + \frac{1-\e^{-\sigma_n(x-1)}}{2 \sigma_n}w_{n}'(0),\\
 W_n^{-}(x) &  = -\e^{\sigma_n(x-1)} W_n^+(1) - \frac{1-\e^{\sigma_n(x-1)}}{2 \sigma_n}w_{n}'(0)
\end{align*}
for all $x\in(0,1)$. Since $W_n^{+}(0) = W_n^{-}(0)$, it follows that
$$
W_n^+(1)=\frac{\e^{\sigma_n}+ \e^{-\sigma_n} -2}{2(\e^{\sigma_n}+\e^{-\sigma_n})\sigma_n} w_{n}'(0).
$$
Thus, using the relation $w_n(0)=W_n^{+}(0) + W_n^{-}(0)$ we see, after a few elementary manipulations, that
\begin{equation}
\label{wnzero}
w_n(0)= - \frac{\tanh \sigma_n}{\sigma_n}\, w_{n}'(0).
\end{equation}
Observe in particular that $\tanh \sigma_n\to 1$
as $n\to\infty$.
Combining the coupling conditions $v_n(0)=w_n(0)$, $u_{n}'(0)=\phi_{n}'(0)$ with the identities $v_n(0)=-1/2$, $u_{n}'(0) =2U_n^{+}(-1)-1/2$ obtained above and using    the fact that $\phi_{n}'(0)= \alpha_n w_{n}'(0)$, it follows from \eqref{wnzero} that
$$\left\{\begin{aligned}
2 w_{n}'(0) \tanh \sigma_n  &= \sigma_n,\\
2w_{n}'(0) \alpha_n &=  4 U_n^+(-1)-1.
\end{aligned}\right.
$$
Now the definition of $\sigma_n$ implies that
$$
U_n^+(-1) = \frac14 + \frac{\alpha_n \sigma_n}{4\tanh \sigma_n}
\sim \frac{\sigma_n}{4}\sim \frac{\sqrt{2 \pi n \i}}{4}
$$
as $n\to\infty$, and hence, using \eqref{boundvn}, 
$$
\|z_n\|_{\H}^2 \geq \|v_n\|_{L^2(-1,0)}^2\geq |U_n^+(-1)|^2  -\frac13\geq \frac{\pi n}{16} 
$$
for all sufficiently large $n\ge1$.
The result now follows from the fact that $\|\hat z_n\|_{\H}=1$ for all $n\ge1$.
\end{proof}

\begin{remark}
An alternative approach to proving optimality of the resolvent bound in Theorem~\ref{thm:Resolvent} is to give a precise description of the part of the spectrum of $\A$ lying in a neighbourhood of the imaginary axis and then to bound the resolvent norm from below by means of the elementary estimate $\|R(\lambda,\A)\|\ge\dist(\lambda,\gs(\A))^{-1}$ for $\gl\in\rho(\A)$. Our approach is shorter and more direct. The required description of the spectrum of $\A$ may nevertheless be found in the appendix.
\end{remark}

%%%%%%%%%%%%%%%%%%%%%%%%%%%%%%%%%%%%%%%%%%%%%%%%%
\section{Energy Decay}

\noindent
In this last main section we convert the resolvent estimate obtained in Theorem~\ref{thm:Resolvent} into a decay rate for the semigroup $\St$ generated by $\A$. In particular, we shall show that $\St$ is semi-uniformly polynomially stable. The key to this is the following well-known theoretical result due to Borichev and Tomilov \cite[Thm.~2.4]{BorTom10}.

\begin{theorem}\label{thm:BT}
Let $A$ be the generator of a bounded $C_0$-semigroup $\Tt$ on a Hilbert space $Z$, and suppose that $\sigma(A)\cap i\R=\emptyset$. For each fixed $\alpha>0$ the following statements are equivalent:
\begin{enumerate}
\item[\textup{(i)}] $\|R(is,A)\|=O(|s|^\alpha)$ as $|s|\to\infty$;
\item[\textup{(ii)}] $\|T(t)A^{-1}\|=O(t^{-1/\alpha})$ as $t\to\infty$;
\item[\textup{(iii)}]$\|T(t)z\|_Z=o(t^{-1/\alpha})$ as $t\to\infty$ for every $z\in D(A)$.
\end{enumerate}
\end{theorem}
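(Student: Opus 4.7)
The plan is to prove the circular chain of implications (i) $\Rightarrow$ (ii) $\Rightarrow$ (iii) $\Rightarrow$ (i), with the heart of the argument residing in (i) $\Rightarrow$ (ii). For this main step I would first observe that the polynomial bound $\|R(is,A)\| = O(|s|^\alpha)$ on $i\R$, together with a Neumann series expansion of $R(\lambda,A) = R(is,A)(I - (is-\lambda)R(is,A))^{-1}$, permits an analytic extension of the resolvent into a region of the form $\Omega = \{\lambda\in\CC : \re\lambda > -c(1+|\im\lambda|)^{-\alpha}\}$ with the bound $\|R(\lambda,A)\| \lesssim (1+|\im\lambda|)^\alpha$ throughout $\Omega$. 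The next move is to represent $T(t)A^{-1}$ via a contour integral: starting from the Laplace inversion on a vertical line $\re\lambda = \epsilon > 0$ and shifting the contour through $\Omega$ down to a path $\Gamma_t$ whose real part is of order $-t^{-1}$ near $i\R$. On such a contour, $|e^{\lambda t}|$ is bounded, and Plancherel's theorem applied to the vertical portion, exploiting the extra decay provided by the factor $A^{-1}$ (which contributes $\lambda^{-1}$), yields $\|T(t)A^{-1}\| = O(t^{-1/\alpha})$.

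The implication (ii) $\Rightarrow$ (iii) is short: for $z \in D(A)$ write $T(t)z = T(t)A^{-1}(Az)$ so that $\|T(t)z\| \le \|T(t)A^{-1}\|\,\|Az\| = O(t^{-1/\alpha})$, and then upgrade to little-$o$ by a density argument, approximating $Az$ in $Z$ by elements of $\ran(A)$ and using the uniform bound to pass to the limit. For the closing implication (iii) $\Rightarrow$ (i), a uniform boundedness argument applied to the family $\{t^{1/\alpha}T(t)A^{-1}\}_{t\ge 0}$, which is pointwise bounded on $Z = \ran(A^{-1})$ by hypothesis, recovers (ii); one then derives (i) from (ii) via the Laplace transform identity $R(\lambda,A)A^{-1}z = \int_0^\infty e^{-\lambda t}T(t)A^{-1}z\,\d t$, which is valid on $\CC_+$ and extends continuously to $i\R$ by dominated convergence using the uniform decay of $T(t)A^{-1}$, so that $\|R(is,A)A^{-1}\|$ is bounded independently of $s$, equivalently $\|R(is,A)\| = O(|s|)$, and a bootstrapping iteration recovers the sharp exponent $\alpha$.

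The main obstacle is the Plancherel step in (i) $\Rightarrow$ (ii): it is precisely here that the Hilbert space hypothesis is essential, since on a general Banach space one would only obtain the weaker bound $\|T(t)A^{-1}\| = O(t^{-1/\alpha}\log t)$ or similar. The delicate balance is between the polynomial growth of the resolvent, the gain of one power of $\lambda$ from $A^{-1}$, and the rate at which one can afford to push the contour into $\Omega$; this balance is what singles out the exponent $1/\alpha$ as sharp. Everything else in the proof (the Neumann series extension, the density upgrade to little-$o$, and the uniform boundedness argument for the converse) is essentially routine functional analysis.
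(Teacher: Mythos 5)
The paper offers no proof of this result; it is quoted verbatim from Borichev--Tomilov \cite[Thm.~2.4]{BorTom10}, with the remark that the implication (ii)$\Rightarrow$(i) is from Batty--Duyckaerts \cite[Prop.~1.3]{BatDuy08}. So there is no proof in the paper to compare your attempt against, and I assess it on its own terms.

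Your chain (i)$\Rightarrow$(ii)$\Rightarrow$(iii)$\Rightarrow$(ii)$\Rightarrow$(i) is the right architecture, and the middle steps are sound. In (ii)$\Rightarrow$(iii), writing $T(t)z = T(t)A^{-1}(Az)$ gives the $O$-bound, and the upgrade to $o$ works by approximating $Az$ in $Z$ by elements of $D(A)$, using $\|T(t)A^{-2}\|\le\|T(t/2)A^{-1}\|^2 = O(t^{-2/\alpha})$ and a $\limsup$ argument. In (iii)$\Rightarrow$(ii), the uniform boundedness principle applied to $\{t^{1/\alpha}T(t)A^{-1}\}_{t\ge 1}\subset\Lin(Z)$, pointwise bounded by hypothesis, is exactly right. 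Your sketch of (i)$\Rightarrow$(ii) via analytic continuation into a strip $\Omega$ of polynomially shrinking width followed by a contour shift is in the spirit of the Hilbert-space argument (this is the step where a naive Banach-space estimate would produce a logarithmic loss, as in Batty--Duyckaerts), though the precise way Plancherel removes the log is the technical heart and your sketch leaves it quite open; this is also where \cite[Thm.~3.2]{RozSei19}, cited in the paper, gives a modern treatment.

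The genuine gap is in the closing step (ii)$\Rightarrow$(i). Your proposed Laplace-transform argument runs into two problems. First, the integral $\int_0^\infty e^{-\lambda t}T(t)A^{-1}\,\d t$ converges absolutely on $i\R$ only when $1/\alpha>1$, i.e.\ $\alpha<1$, so the dominated-convergence extension to $i\R$ fails for general $\alpha>0$. Second, and more seriously, even when it does converge, all it yields is $\sup_s\|R(is,A)A^{-1}\|<\infty$, hence via $R(is,A)=isR(is,A)A^{-1}-A^{-1}$ only $\|R(is,A)\|=O(|s|)$. For $\alpha<1$ (which is precisely the range of interest in this paper, where $\alpha=1/2$) this is strictly weaker than the claimed $O(|s|^\alpha)$. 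The ``bootstrapping iteration'' you invoke to recover the sharp exponent is not supplied, and it is in fact the entire content of this implication. The correct argument (essentially Batty--Duyckaerts) is short but is a different idea: it truncates the Laplace integral at a finite, $s$-dependent time rather than integrating to infinity. Fix $s\ne0$ with $is\in\rho(A)$. From $(\lambda-A)\int_0^\tau e^{-\lambda t}T(t)\,\d t=I-e^{-\lambda\tau}T(\tau)$ one has, for every $\tau>0$,
\begin{equation*}
R(is,A)=\int_0^\tau e^{-ist}T(t)\,\d t+e^{-is\tau}T(\tau)R(is,A).
\end{equation*}
Writing $T(\tau)R(is,A)=T(\tau)A^{-1}\bigl(isR(is,A)-I\bigr)$ and setting $r=\|R(is,A)\|$, $M=\sup_t\|T(t)\|$, $\|T(t)A^{-1}\|\le Ct^{-1/\alpha}$, one obtains
\begin{equation*}
r\le M\tau+\frac{C}{\tau^{1/\alpha}}\bigl(|s|\,r+1\bigr).
\end{equation*}
Choosing $\tau=(2C|s|)^{\alpha}$ makes the coefficient of $r$ on the right equal to $1/2$, and one concludes $r\le 2M(2C|s|)^\alpha+|s|^{-1}=O(|s|^\alpha)$. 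This works for every $\alpha>0$ and on Banach spaces, which is why the paper can cite it separately from the Hilbert-space-specific implication (i)$\Rightarrow$(ii). You should replace your Laplace-transform step with this truncation-and-absorption argument.
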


A $C_0$-semigroup satisfying the equivalent conditions of Theorem~\ref{thm:BT} is said to be \emph{polynomially stable} (\emph{with parameter $\alpha$.}) By \cite[Prop.~1.3]{BatDuy08} the implication (ii)$\implies$(i) holds much more generally and even for $C_0$-semigroups on Banach spaces, whereas passing from (i) to (ii) in general requires a logarithmic correction factor in the Banach space setting, as is shown in  \cite[Thm.~1.5]{BatDuy08} and \cite[Thm.~4.1]{BorTom10}. For $C_0$-semigroups on Hilbert spaces the implication (i)$\implies$(ii) has recently been extended beyond the case of polynomial resolvent growth in \cite[Thm.~3.2]{RozSei19}.

From now on we consider the $C_0$-semigroup $\St$ generated by the operator $\A$ associated with system \eqref{WGP}. Since the orbits of the semigroup $\St$ with initial values $z_0\in D(\A)$ correspond to classical solutions of the abstract Cauchy problem \eqref{eq:ACP}, we may interpret parts (ii) and (iii) of Theorem~\ref{thm:BT} as statements about (uniform) rates of energy decay of classical solutions to our problem \eqref{WGP}.

\begin{theorem}\label{thm:decay}
Assume that~\eqref{condmu} holds. Then the semigroup $\St$ generated by the operator $\A$ is polynomially stable with parameter $1/2$. In particular, for any vector $z_0=(u,v,w,\eta)^T\in D(\A)$, the associated classical solution of \eqref{eq:ACP} satisfies $\|S(t)z_0\|_\H=o(t^{-2})$ as $t\to\infty$.
\end{theorem}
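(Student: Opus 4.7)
The plan is to deduce Theorem~\ref{thm:decay} as a direct application of the Borichev--Tomilov characterisation recalled in Theorem~\ref{thm:BT}, with $A=\A$, $Z=\H$, $\Tt=\St$, and exponent $\alpha=1/2$. The work has essentially already been done in the previous sections, so the proof will mainly consist in verifying the hypotheses and reading off the consequences.

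First I would note that $\St$ is a bounded $C_0$-semigroup on $\H$: indeed, by Theorem~\ref{thm:SG}, $\A$ generates a \emph{contraction} semigroup, so $\|S(t)\|_{\L(\H)}\le 1$ for all $t\ge 0$. Next, the spectral hypothesis $\sigma(\A)\cap i\R=\emptyset$ required in Theorem~\ref{thm:BT} follows immediately from Theorem~\ref{thm:Resolvent}, which gives the stronger statement $\sigma(\A)\subset\CC_-$. The remaining quantitative input is the resolvent bound: Theorem~\ref{thm:Resolvent} yields
\begin{equation*}
\|R(is,\A)\|_{\L(\H)}=O(|s|^{1/2})\qquad\text{as }|s|\to\infty,
\end{equation*}
which is precisely condition~(i) of Theorem~\ref{thm:BT} with $\alpha=1/2$.

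Having verified the hypotheses, I would invoke the equivalence in Theorem~\ref{thm:BT} to conclude that conditions (ii) and (iii) also hold with $\alpha=1/2$, which gives $\|S(t)\A^{-1}\|_{\L(\H)}=O(t^{-2})$ as $t\to\infty$ (semi-uniform polynomial stability with parameter $1/2$), together with the pointwise refinement $\|S(t)z_0\|_\H=o(t^{-2})$ for every $z_0\in D(\A)$. The latter is the sharp decay rate for classical solutions of the Cauchy problem~\eqref{eq:ACP}, since orbits starting in $D(\A)$ coincide with classical solutions of \eqref{WGP}--\eqref{WGPtransmission} in the history framework of Dafermos.

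Since every ingredient is already in place, there is no real obstacle: the only mild point worth spelling out is the identification $1/\alpha=2$ that matches the polynomial decay exponent with the resolvent growth exponent, and the reminder that Proposition~\ref{prp:opt} (together with Theorem~\ref{stima basso}) will show this rate to be optimal. The proof is therefore just a clean application of Theorem~\ref{thm:BT} to the resolvent estimate of Theorem~\ref{thm:Resolvent}.
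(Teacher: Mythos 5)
Your proposal is correct and follows exactly the same route as the paper: apply the Borichev--Tomilov theorem (Theorem~\ref{thm:BT}) with $\alpha=1/2$, using the spectral inclusion $\sigma(\A)\subset\CC_-$ and the resolvent bound $\|R(is,\A)\|=O(|s|^{1/2})$ from Theorem~\ref{thm:Resolvent}, together with contractivity of $\St$ from Theorem~\ref{thm:SG}. The paper states this as an immediate consequence; your write-up simply spells out the verification of hypotheses in slightly more detail.
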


\begin{proof}
This follows immediately from Theorems~\ref{thm:Resolvent} and \ref{thm:BT}.
\end{proof}

Our next result shows that optimality of the resolvent bound in Theorem~\ref{thm:Resolvent}, as established in Theorem~\ref{stima basso}, implies optimality of the decay rate in Theorem~\ref{thm:decay}.

\begin{proposition}\label{prp:opt}
Let $\St$ be the $C_0$-semigroup generated by $\A$. Given any function $r:\R_+\to(0,\infty)$ such that $r(t)=o(t^{-2})$ as $t\to\infty$, there exists a vector $z_0=(u,v,w,\eta)^T \in D(\A)$ such that
\begin{equation}\label{eq:limsup}
\limsup_{t\to\infty}\frac{\|S(t)z_0\|_\H}{r(t)}=\infty.
\end{equation}
In other words, for any such function $r$ there exist initial data giving rise to a classical solution of \eqref{eq:ACP} whose energy decays strictly more slowly than $r(t)$ as $t\to\infty$.
\end{proposition}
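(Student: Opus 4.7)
The plan is to argue by contradiction, combining a Baire category argument to promote the pointwise hypothesis into a uniform operator-norm bound with the explicit approximate eigenvectors that already underlie the proof of Theorem~\ref{stima basso}. Throughout I use that $\gs(\A)\subset\CC_-$ by Theorem~\ref{thm:Resolvent}, so in particular $\A\inv\in\L(\H)$ exists. Assume, for contradiction, that some $r:\R_+\to(0,\infty)$ with $r(t)=o(t^{-2})$ satisfies $\|S(t)z_0\|_\H=O(r(t))$ as $t\to\infty$ for every $z_0\in D(\A)$.

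My first step is to replace this pointwise estimate by a uniform operator bound on $S(t)\A\inv$. For each $n\in\N$ define
\[
F_n=\bigcap_{t\ge n}\bigl\{x\in\H:\|S(t)\A\inv x\|_\H\le n\,r(t)\bigr\},
\]
which is closed by strong continuity of the map $t\mapsto S(t)\A\inv x$. Applying the contradiction hypothesis to $z_0=\A\inv x\in D(\A)$ shows that $\H=\bigcup_n F_n$, so Baire's theorem yields an $F_{n_0}$ containing some ball $B(x_0,\gd)$. A standard symmetrisation (looking at $x_0\pm y$ with $\|y\|\le\gd$) then produces constants $M,T_0>0$ with
\[
\|S(t)\A\inv\|_{\L(\H)}\le M\,r(t),\qquad t\ge T_0.
\]

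The second step is to construct sequences violating this bound. From Theorem~\ref{stima basso} I fix $c>0$ and $s_n\to\infty$ with $\|R(\i s_n,\A)\|\ge c\,s_n^{1/2}$, and I pick unit vectors $y_n\in\H$ such that $x_n:=R(\i s_n,\A)y_n\in D(\A)$ satisfies $\|x_n\|_\H\ge\tfrac{c}{2}s_n^{1/2}$. Since $\A x_n=\i s_n x_n-y_n$, variation of parameters gives
\[
S(t)x_n=\e^{\i s_n t}x_n-\int_0^t \e^{\i s_n(t-\tau)}S(\tau)y_n\,\d\tau,
\]
and the contractivity of $\St$ yields $\|S(t)x_n\|_\H\ge\|x_n\|_\H-t$. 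Choosing $t_n:=\tfrac{c}{4}s_n^{1/2}\to\infty$ then produces $\|S(t_n)x_n\|_\H\ge\tfrac{c}{4}s_n^{1/2}$. The upper bound of Theorem~\ref{thm:Resolvent} gives $\|x_n\|_\H\lesssim s_n^{1/2}$, hence $\|\A x_n\|_\H\le s_n\|x_n\|_\H+1\lesssim s_n^{3/2}$, and normalising $z_n:=x_n/\|\A x_n\|_\H$ produces $\|\A z_n\|_\H=1$ together with
\[
\|S(t_n)z_n\|_\H=\frac{\|S(t_n)x_n\|_\H}{\|\A x_n\|_\H}\gtrsim \frac{s_n^{1/2}}{s_n^{3/2}}=\frac{1}{s_n}\asymp t_n^{-2}.
\]
Combining with Step~1, the identity $z_n=\A\inv(\A z_n)$ together with $\|\A z_n\|_\H=1$ gives $\|S(t_n)z_n\|_\H\le M\,r(t_n)$ for $n$ large, so $t_n^{-2}\lesssim r(t_n)$, i.e.\ $\liminf_{n\to\infty}t_n^{2}\,r(t_n)>0$, contradicting $r(t)=o(t^{-2})$.

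I expect the Baire step to be the main technical delicacy: one must carefully verify closedness of each $F_n$ and execute the absorbing-set argument so as to convert the purely pointwise hypothesis into an honest operator-norm bound on $S(t)\A\inv$. Once this is done the rest is essentially quantitative bookkeeping built directly on top of the approximate eigenvector construction already carried out in the proof of Theorem~\ref{stima basso}.
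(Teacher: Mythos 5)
Your proof is correct and reaches the same conclusion, but by a more self-contained route than the paper's. The paper appeals directly to the uniform boundedness principle where you rebuild it from Baire via the sets $F_n$ (same content, slightly longer), and---more substantially---the paper replaces your Duhamel-based Step~2 with a single citation of \cite[Prop.~5.4]{CPSST19}, which converts the resolvent lower bound of Theorem~\ref{stima basso} into $\limsup_{t\to\infty}t^2\|S(t)(I-\A)^{-1}\|>0$. Your Step~2 is a clean elementary proof of exactly that transfer in this special case, so your version is a genuine gain in self-containedness. One mild caveat: you invoke Theorem~\ref{thm:Resolvent}, and hence the hypothesis~\eqref{condmu}, to obtain $0\in\rho(\A)$ so that $\A^{-1}$ exists; but Proposition~\ref{prp:opt} is stated without~\eqref{condmu}, and the paper's proof is written deliberately to avoid it---it works throughout with $(I-\A)^{-1}$ (always bounded, since $\St$ is contractive), observes that the contradiction hypothesis forces $\|S(t)(I-\A)^{-1}\|\to0$, and only then concludes $\gs(\A)\subset\CC_-$ from \cite[Prop.~1.3]{BatDuy08} before invoking Theorem~\ref{stima basso}. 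Replacing $\A^{-1}$ by $(I-\A)^{-1}$ throughout your argument (the bounds $\|x_n\|\lesssim s_n^{1/2}$ and $\|(I-\A)x_n\|\lesssim s_n^{3/2}$ go through unchanged) and deriving the spectral inclusion from the contradiction hypothesis in this way would remove the unstated assumption and align your proof fully with the paper's.
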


\begin{proof}
Replacing $r(t)$ by $\sup_{s\ge t}r(s)$ for $t\ge0$ if necessary, we may assume that $r$ is non-increasing. Suppose, for the sake of a contradiction, that \eqref{eq:limsup} is false for all $z_0\in D(\A)$. Since $1\in\rho(\A)$ by contractivity of the semigroup $\St$ and since $(I-\A)^{-1}$ maps $\H$ onto $D(\A)$, we then have
$\sup_{t\ge0}r(t)^{-1}\|S(t)(I-\A)^{-1}z_0\|_\H<\infty$
for all $z_0\in \H$. Thus $\sup_{t\ge0}r(t)^{-1}\|S(t)(I-\A)^{-1}\|<\infty$ by the uniform boundedness principle, and we may let $C=\sup_{t\ge0}r(t)^{-1}\|S(t)(I-\A)^{-1}\|$, a positive real number. 
Note in particular that $\|S(t)(I-\A)^{-1}\|\to0$ as $t\to\infty$, and hence $\sigma(\A)\subset\CC_-$ by \cite[Prop.~1.3]{BatDuy08}.
 On the other hand, it follows straightforwardly from Theorem~\ref{stima basso} and \cite[Prop.~5.4]{CPSST19} that
$\limsup_{t\to\infty} t^{2}\|S(t)(I-\A)^{-1}\|>0$, so we may find a sequence $(t_n)_{n\ge1}$ of positive real numbers such that $t_n\to\infty$ as $n\to\infty$ and a constant $c>0$ such that $t_n^2\|S(t_n)(I-\A)^{-1}\|\ge c$ for all $n\ge1$.  Thus $Cr(t_n)\ge  \|S(t_n)(I-\A)^{-1}\|\ge c t_n^{-2}$ for all $n\ge1$, which contradicts the assumption that $r(t)=o(t^{-2})$ as $t\to\infty$.
\end{proof}

%%%%%%%%%%%%%%%%%%%%%%%%%%%%%%%%%%%%%%%%%%%%%%%%%
\section*{Appendix:\ The Spectrum of $\A$}

\theoremstyle{plain}
\newtheorem{theoremAPP}{Theorem}[section]
\newtheorem{lemmaAPP}[theoremAPP]{Lemma}
\renewcommand{\thetheoremAPP}{A.\arabic{theoremAPP}}
\renewcommand{\thelemmaAPP}{A.\arabic{lemmaAPP}}
\renewcommand{\theequation}{A.\arabic{equation}}
\setcounter{equation}{0}

\noindent
In this appendix we describe, using similar techniques as in \cite{Del18}, the spectrum of $\A$ near the imaginary axis. We shall  assume throughout that~\eqref{condmu} holds. For $\delta>0$  as in \eqref{condmu}, we introduce the vertical strip
$\Pi_\delta = \{ \gl \in \CC : -\tfrac{\delta}{2} < \re \gl \leq0 \}$, and
we denote by $Z_\ell$ the
zero set of the map $\ell:\Pi_\gd\setminus \set{0}\to\CC$, noting that, by~\eqref{condmu},
$$
\ell(\gl) = 1 + \frac{1}{\gl} \int_0^\infty \mu(s)(1-\e^{-\gl s})\, \d s
$$
is indeed well-defined for every $\gl \in \Pi_\delta \setminus \set{0}$.
We also consider the set
$$
\Sigma=\Setm{\gl \in \Pi_\delta \setminus (Z_\ell \cup \{ 0\} )
}{ \sqrt{\ell(\gl)\gl} \sinh\gl \cosh \sqrt{\tfrac{\gl}{\ell(\gl)}}+
\cosh\gl \sinh \sqrt{\tfrac{\gl}{\ell(\gl)}}=0  }.
$$

\begin{theoremAPP}
\label{teospec}
The spectrum $\sigma(\A)$ of the operator
$\A$ satisfies $
\sigma(\A)  \cap \Pi_\delta = \Sigma \cup Z_\ell .
$
\end{theoremAPP}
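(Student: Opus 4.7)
The plan is to analyse the eigenvalue equation $(\gl-\A)z=0$ directly by reducing it to a pair of boundary value problems, one on $(-1,0)$ for $u$ and one on $(0,1)$ for $w$. For $z=(u,v,w,\eta)^T\in D(\A)$ the equation splits into $v=\gl u$, $u''=\gl^2 u$ with $u(-1)=0$, together with $\gl\eta+\eta_s=w$ and $\gl w=\phi''$ with $w(1)=0$. Integrating the memory equation subject to $\eta(x,0)=0$ yields, for $\gl\ne 0$, the formula $\eta(x,s)=\gl^{-1}(1-\e^{-\gl s})w(x)$; substituting into \eqref{PHIDEF} gives $\phi=\ell(\gl)w$, so the third equation becomes $\ell(\gl)w''=\gl w$.

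For $\gl\in\Pi_\gd\setminus(Z_\ell\cup\set{0})$ one may then write the general solutions respecting the Dirichlet conditions in the form $u(x)=A\sinh(\gl(x+1))$ and $w(x)=B\sinh(\sqrt{\gl/\ell(\gl)}(1-x))$ with $A,B\in\CC$. Imposing the coupling conditions $v(0)=w(0)$ and $u'(0)=\phi'(0)=\ell(\gl)w'(0)$ produces a $2\times 2$ homogeneous linear system in $(A,B)$ whose determinant equals
\eq{
\gl\bigl(\sqrt{\gl\ell(\gl)}\sinh\gl\cosh\sqrt{\gl/\ell(\gl)}+\cosh\gl\sinh\sqrt{\gl/\ell(\gl)}\bigr),
}
that is, $\gl$ multiplied by the characteristic expression defining $\Sigma$. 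Hence $\gl\in\Sigma$ if and only if $\gl$ is an eigenvalue of $\A$ in $\Pi_\gd\setminus(Z_\ell\cup\set{0})$. Conversely, for $\gl$ outside $\Sigma\cup Z_\ell\cup\set{0}$ an inhomogeneous version of the same calculation, closely paralleling the proof of Lemma~\ref{lem:GPBCSEllipticProblem-bis}, solves $(\gl-\A)z=\hat z$ explicitly and uniquely for every $\hat z\in\H$, yielding $\gl\in\rho(\A)$.

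The remaining case is $\gl\in Z_\ell\cap\Pi_\gd$. There the identity $\phi=\ell(\gl)w$ forces $\phi\equiv 0$, and an analogous inspection of the resolvent equation $(\gl-\A)z=\hat z$ shows that $\phi$ is then uniquely determined by $\hat\eta$ alone through a function of the type $\hat\rho$ appearing in the proof of Lemma~\ref{lem:GPBCSEllipticProblem-bis}, whereas $w$ becomes over-constrained through the pair of requirements $w(1)=0$ and $\gl w=\hat\rho''+\hat w$. For generic $\hat z\in\H$ the resulting compatibility condition fails, so $\gl-\A$ is not surjective and $\gl\in\sigma(\A)$. Finally, $0\in\rho(\A)$ since $\ell(0)=2$ by \eqref{eq:ell_g} and hence $0\notin Z_\ell$, and the resolvent at $0$ can be constructed directly by the same procedure.

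The main obstacle will be the $Z_\ell$ analysis: one has to verify carefully that the compatibility constraint obtained by propagating $\hat z$ through the forced form of $\phi$ is indeed a nontrivial linear condition on $\H$, so that $\gl-\A$ genuinely fails to be surjective. A secondary technical point is the unambiguous choice of branches for $\sqrt{\gl/\ell(\gl)}$ and $\sqrt{\gl\ell(\gl)}$ on $\Pi_\gd\setminus Z_\ell$; this is resolved by noting that the characteristic expression defining $\Sigma$ is invariant, up to an overall sign, under a simultaneous flip of both square roots, so that the set $\Sigma$ itself is intrinsically defined.
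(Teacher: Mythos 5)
Your overall route is the same as the paper's: reduce $(\gl-\A)z=\hat z$ to ODEs on $(-1,0)$ and $(0,1)$, eliminate $\eta$ via $\eta(x,s)=\gl^{-1}(1-\e^{-\gl s})w(x)+(\text{term from }\hat\eta)$ so that $\phi=\ell(\gl)w+(\text{term from }\hat\eta)$, then impose the transmission conditions, and your determinant computation (giving $\gl$ times the expression defining $\Sigma$) is correct. The branch--independence remark about $\Sigma$ is a sensible observation, and your verification that the resulting eigenvector lies in $D(\A)$ follows as you indicate.

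The one genuine gap is in the $Z_\ell$ step, which you yourself flag as ``the main obstacle''. You frame the failure of surjectivity as a ``compatibility condition'' on $\hat z$ that ``generically'' fails, suggesting some nontrivial linear constraint. That is not what goes wrong, and this framing would not readily yield a proof: when $\ell(\gl)=0$ the map $\gl-\A$ does not miss a finite-codimensional subspace. The correct obstruction is a \emph{loss of regularity}: once $\ell(\gl)=0$, the function $\phi$ is completely determined by $\hat\eta$, so from $\gl w-\phi''=\hat w$ the component $w$ is forced to equal $\gl^{-1}(\phi''+\hat w)$, which has no better Sobolev regularity than $\hat w\in L^2(0,1)$ — yet membership in $D(\A)$ requires $w\in H^1_r(0,1)$. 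The paper exploits this by taking the simplest possible test datum $\hat z=(0,0,\hat w,0)^T$ with $\hat w\in L^2(0,1)\setminus H^1(0,1)$; then $\phi\equiv 0$ and $w=\gl^{-1}\hat w\notin H^1_r(0,1)$, so surjectivity fails immediately. Your sketch does not isolate this mechanism and, as stated (``for generic $\hat z$ the compatibility condition fails''), would not go through without replacing the appeal to genericity with this explicit regularity argument. The $\gl=0$ case, which you handle by ``the same procedure'', does indeed work but requires its own short direct computation since the homogeneous ODEs degenerate to $u''=0$ and $\phi''=0$; the paper carries this out as its Step 1.
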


In the proof of this theorem we shall make use of the following technical lemma whose proof is similar to the argument in the first part of the proof of Lemma~\ref{lem:GPBCSEllipticProblem-bis} and consequently omitted.

\begin{lemmaAPP}
\label{lemma0}
For any $\hat \eta \in \M$ and $\gl \in \Pi_\delta$, the function $\xi_{\hat\eta,\gl}$ defined by
$$
\hat \xi_{\hat\eta,\gl}(x,s)=
\int_0^s \e^{-\gl(s-\sigma)} \hat \eta(x,\sigma) \,\d \sigma,\qquad x \in (0,1),\ s>0,
$$
belongs to $\M$, and 
$
\|\hat \xi_{\hat\eta,\gl}\|_\M \leq \frac{2\sqrt{C}}{ \delta + 2 \re \gl}\|\hat \eta\|_\M.
$
\end{lemmaAPP}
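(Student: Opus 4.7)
The plan is to adapt the Young-inequality argument used in the first part of the proof of Lemma~\ref{lem:GPBCSEllipticProblem-bis} (which handled the special case $\gl=1$) so that it works for general $\gl\in\Pi_\gd$, by bringing in the hypothesis~\eqref{condmu} to reshuffle the weight $\mu(s)$ into $\mu(\sigma)$ at the cost of an exponential factor.

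First I would estimate $\hat\xi_{\hat\eta,\gl}(\cdot,s)$ pointwise in $s>0$ in the space $H^1_r(0,1)$. By the triangle inequality for the (Bochner) integral and the elementary identity $|\e^{-\gl(s-\sigma)}|=\e^{-(\re\gl)(s-\sigma)}$, one obtains
\begin{equation*}
\|\hat\xi_{\hat\eta,\gl}(\cdot,s)\|_{H^1_r(0,1)}
\leq \int_0^s \e^{-(\re\gl)(s-\sigma)}\|\hat\eta(\cdot,\sigma)\|_{H^1_r(0,1)}\,\d\sigma.
\end{equation*}
Multiplying both sides by $\sqrt{\mu(s)}$ and invoking~\eqref{condmu} with $t=s-\sigma$, so that $\sqrt{\mu(s)}=\sqrt{\mu((s-\sigma)+\sigma)}\leq \sqrt{C}\,\e^{-\delta(s-\sigma)/2}\sqrt{\mu(\sigma)}$ for $0<\sigma<s$, gives
\begin{equation*}
\sqrt{\mu(s)}\,\|\hat\xi_{\hat\eta,\gl}(\cdot,s)\|_{H^1_r(0,1)}
\leq \sqrt{C}\int_0^s \e^{-(\delta/2+\re\gl)(s-\sigma)}\sqrt{\mu(\sigma)}\,\|\hat\eta(\cdot,\sigma)\|_{H^1_r(0,1)}\,\d\sigma.
\end{equation*}

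Next I would recognise the right-hand side as $\sqrt{C}$ times the convolution on $(0,\infty)$ of the $L^1$-function $t\mapsto\e^{-(\delta/2+\re\gl)t}$, whose $L^1$-norm equals $\tfrac{2}{\delta+2\re\gl}$ (and is strictly positive precisely because $\gl\in\Pi_\gd$), with the $L^2(0,\infty)$-function $\sigma\mapsto\sqrt{\mu(\sigma)}\,\|\hat\eta(\cdot,\sigma)\|_{H^1_r(0,1)}$, whose $L^2$-norm is by definition $\|\hat\eta\|_\M$. Applying Young's inequality for convolutions in the form $L^1\ast L^2\hookrightarrow L^2$ therefore yields
\begin{equation*}
\Bigl\|s\mapsto \sqrt{\mu(s)}\,\|\hat\xi_{\hat\eta,\gl}(\cdot,s)\|_{H^1_r(0,1)}\Bigr\|_{L^2(0,\infty)}
\leq \frac{2\sqrt{C}}{\delta+2\re\gl}\,\|\hat\eta\|_\M.
\end{equation*}
Squaring both sides recovers exactly $\|\hat\xi_{\hat\eta,\gl}\|_\M^2$ on the left, and this simultaneously establishes membership of $\hat\xi_{\hat\eta,\gl}$ in $\M$ and the claimed norm estimate.

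I expect no serious obstacle: the only slight subtlety is bookkeeping between scalar and $H^1_r(0,1)$-valued integrals (justifying the pointwise bound via the Bochner triangle inequality) and checking that the hypothesis $\gl\in\Pi_\gd$ is exactly what guarantees $\delta/2+\re\gl>0$, so that the exponential is integrable on $(0,\infty)$ and the stated fraction is finite.
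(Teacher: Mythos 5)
Your argument is correct and is exactly the one the paper intends: it omits the proof of this lemma, saying it is similar to the first part of the proof of Lemma~\ref{lem:GPBCSEllipticProblem-bis}, and your adaptation---replacing the monotonicity of $\mu$ used there (for $\gl=1$) by condition~\eqref{condmu} to shift the weight from $\mu(s)$ to $\mu(\sigma)$, and then applying Young's convolution inequality with the kernel $t\mapsto\sqrt{C}\,\e^{-(\delta/2+\re\gl)t}$ of $L^1$-norm $2\sqrt{C}/(\delta+2\re\gl)$---is precisely how the stated constant arises. No gaps; the only hypotheses used, \eqref{condmu} and $\re\gl>-\delta/2$, are available in the appendix.
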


\begin{proof}[Proof of Theorem~\ref{teospec}]

We divide the proof into three steps.

\medskip
\noindent
{\it Step\ 1.} We first show that $0 \not \in \sigma(\A)$, which is to say that for every
$\hat z =(\hat u,\hat v,\hat w,\hat \eta)^T \in \H$ the equation
$\A z =\hat z$ has a unique solution $z=(u,v,w,\eta)^T\in D(\A)$. Componentwise, we obtain 
$$
\left\{\begin{aligned}
 v (x)&= \hat u(x),&x\in(-1,0),\\
 u''(x)&= \hat v(x),&x\in(-1,0),\\
 \phi''(x)&= \hat w(x),&x\in(0,1),\\
 T\eta (x,s)+ w(x)& = \hat \eta(x,s), \quad&x\in(0,1),\ s>0.
\end{aligned}\right.
$$
Integrating the last equation and using $\eta(x,0)=0$, we obtain
\begin{equation}\label{SS4}
 \eta(x,s)= s w(x) - \hat \xi_{\hat \eta, 0}(x,s),\qquad x\in(0,1),\ s>0.
 \end{equation}
 Solving for $u$ and $\phi$ by using the conditions $u(-1)=0$ and $\phi(1)=0$ we find that 
\begin{align*}
u(x) &= a (x+1) + x \int_{-1}^x \hat v(r) \,\d r - \int_{-1}^x r \hat v(r) \,\d r,  &x\in(-1,0),\\\phi(x)& = b (1-x) - x \int_{x}^1\hat w(r) \,\d r +  \int_{x}^1 r \hat w(r) \,\d r, & x\in(0,1),
\end{align*}
for some $a,b\in\CC$. Note in particular that $u \in H^2(-1,0)$ and $\phi \in H^2(0,1)$.
Next, recalling the definition of $\phi$, we find
$$w(x) = \frac12 \phi(x) +  \frac12 \int_0^\infty \mu(s) \hat \xi_{\hat \eta, 0}(x,s) \,\d s,\qquad x\in(0,1).$$
From Lemma \ref{lemma0} we obtain
$$\left\| \int_0^\infty \mu(s) \hat \xi_{\hat \eta, 0}(x,s) \,\d s \right\|_{H^1_r(0,1)}
\leq \sqrt{\kappa}\|\hat \xi_{\hat \eta, 0}\|_\M \leq 2\delta^{-1} \sqrt{\kappa C}\|\hat\eta\|_\M,$$
where $\kappa=\int_0^\infty\mu(s)\,\d s$. In particular, we have $w\in H^1_r(0,1)$.
Since the map $s\mapsto s^2\mu(s)$ is an element of $L^1(0,\infty)$, it follows from Lemma~\ref{lemma0} that the function $\eta$ defined in \eqref{SS4}
belongs to $\M$. Note also that
$\eta_s = w -\hat \eta \in \M$.
Finally, by monotonicity of $\mu$, we have
\begin{align*}
\|\eta(s)\|_{H^1_r(0,1)} 
\leq s\|w\|_{H^1_r(0,1)} + \frac{1}{\mu(s)} \bigg(\int_0^s \mu(\sigma)\,\d \sigma\bigg)^{1/2} \|\hat \eta\|_\M \to 0,\qquad s\to0,
\end{align*}
which implies that $\eta\in D(T)$.
It remains only to show that the constants $a,b\in\CC$ may be chosen in such a way that the coupling conditions $v(0)=w(0)$ and  $u'(0) = \phi'(0)$ are satisfied.
Straightforward computations show that these conditions are equivalent to
\begin{align*}
b  = 2 \hat u(0) - \int_0^\infty \mu(s)\hat \xi_{\hat \eta, 0}(0,s)\,\d s - \int_{0}^1 r \hat w(r) \,\d r,\quad\,\,\,
a = -b - \int_{-1}^0 \hat v (r) \,\d r  - \int_0^1 \hat w(r) \,\d r.
\end{align*}
Since all of the integrals are finite, we may indeed find suitable constants $a,b \in\CC$.

\medskip
\noindent
{\it Step\ 2.} We prove that
$Z_\ell \subset \sigma(\A)$ by showing that $\gl - \A$ is not onto for $\gl \in Z_\ell$.
Pick
any $\hat w \in L^2(0,1) \setminus H^1(0,1)$ and set
$\hat z = (0,0,\hat w,0)^T \in \H$.
If $\gl - \A$ were onto, then there would exist $z=(u,v,w,\eta)^T\in D(\A)$ such that
$\gl z - \A z =\hat z$.
In component form, the problem becomes
$$
\left\{\begin{aligned}
\gl u (x)&= v(x),&x\in(-1,0),\\
\gl v(x) &= u''(x),&x\in(-1,0),\\
\gl w(x) - \phi''(x) &= \hat w(x), &x\in(0,1),\\
\gl \eta(x,s) -  T\eta(x,s)& = w(x),\quad &x\in(0,1),\ s>0.
\end{aligned}\right.
$$
Integrating the last equation with $\eta(x,0)=0$ we get
$
\eta(x,s)= \frac{1}{\gl} (1- \e^{-\gl s}) w(x)
$ for $x\in(0,1)$ and $s>0$.
Since $\ell(\gl)=0$ a short calculation yields
$\phi = 0$, and now
the third equation implies that
$w = \gl^{-1}\hat w \notin H^1_r(0,1)$. This is the desired contradiction.

\medskip
\noindent
{\it Step\ 3.} 
Let $\gl \in \Pi_\delta \setminus ( Z_\ell \cup \{ 0\} )$ be arbitrary. In the light of Steps 1 and 2,
the result will be proved once we have shown that
$\gl \in \Sigma \iff\gl \in \sigma(\A)$. To this end, 
let us fix an arbitrary $\hat z =(\hat u,\hat v,\hat w,\hat \eta)^T \in \H$.
Our goal is to show that the equation
$\gl z - \A z =\hat z$
admits a unique solution $z=(u,v,w,\eta)^T\in D(\A)$ if and only if
\begin{equation}
\label{claim}
\sqrt{\ell(\gl)\gl} \sinh\gl \cosh \sqrt{\tfrac{\gl}{\ell(\gl)}}+
\cosh\gl \sinh \sqrt{\tfrac{\gl}{\ell(\gl)}} \neq0.
\end{equation}
In component form, our problem becomes
$$
\left\{\begin{aligned}
\gl u (x)-v(x)&= \hat u(x),&x\in(-1,0),\\
\gl v(x) -u''(x)&=\hat v(x) ,&x\in(-1,0),\\
\gl w(x) - \phi''(x) &= \hat w(x), &x\in(0,1),\\
\gl \eta(x,s) -  T\eta(x,s)-w(x)& = \hat\eta(x,s),\quad &x\in(0,1),\ s>0.
\end{aligned}\right.
$$
Integrating the last equation and using $\eta(x,0)=0$ we find 
\begin{equation}
\label{etaeta}
\eta(x,s)= \frac{1- \e^{-\gl s}}{\gl}  w(x) + \hat \xi_{\hat \eta ,\gl}(x,s),\qquad x\in(0,1),\ s>0.
\end{equation}
Recalling the definition of $\phi$, we have
\begin{equation}
\label{wx}
w(x) = \frac{\phi(x)}{\ell(\gl)}- \frac{1}{\ell(\gl)} \int_0^\infty \mu(s)\hat \xi_{\hat \eta ,\gl}(x,s) \,\d s,\qquad x\in(0,1).
\end{equation}
Using the boundary condition $u(-1)=0$ we obtain 
$$u(x) = a(\gl)\sinh (\gl(x+1)) -U(\gl,x),\qquad x\in(-1,0),$$
where $a(\gl)\in \CC$ and
$$
U(\gl,x)= \frac{1}{\gl} \int_{-1}^x \sinh(\gl(x-r))(\hat v(r)+\gl \hat u(r)) \,\d r,\qquad x\in(-1,0).
$$
Once $u$ has been found, $v$ is determined by the first equation of our system.
It is straightforward to check that $u \in H^2(-1,0)$ and $v \in H^1_l(-1,0)$. Let us introduce the auxiliary function
$$
\hat \varrho_{\hat \eta ,\gl} (x)=\frac{\gl}{\ell(\gl)}\int_0^\infty \mu(s)\hat \xi_{\hat \eta ,\gl}(x,s) \,\d s,\qquad x\in(0,1).
$$
The general solution for $\phi$ subject to the boundary condition $\phi(1)=0$ may be written as
\begin{equation}
\label{phix}
\phi(x) = - b(\gl)\sinh \Big(\sqrt{\tfrac{\gl}{\ell(\gl)}}(1-x)\Big) -\Phi(\gl,x),\qquad x\in(0,1),
\end{equation}
where $b(\gl)\in \CC$ and
$$
\Phi(\gl,x)= \sqrt{\frac{\ell(\gl)}{\gl}} \int_{x}^1 \sinh \Big(\sqrt{\tfrac{\gl}{\ell(\gl)}}(r-x)\Big)
(\hat w(r) +\hat \varrho_{\hat \eta ,\gl} (r)) \,\d r,\qquad x\in(0,1).
$$
By \eqref{wx}, we also have
\begin{equation}
\label{explicitw}
w(x)= - \frac{b(\gl)}{\ell(\gl)}\sinh \Big(\sqrt{\tfrac{\gl}{\ell(\gl)}}(1-x)\Big) -\frac{\Phi(\gl,x)}{\ell(\gl)}
- \frac{\hat \varrho_{\hat \eta ,\gl}(x)}{\gl},\qquad x\in(0,1).
\end{equation}
Once $w$ has been found, $\eta$ is determined by~\eqref{etaeta}.
We now show that $\phi\in H^2(0,1)$, $w \in H^1_r(0,1)$ and $\eta \in D(T)$.
In fact, it follows from \eqref{phix} and \eqref{explicitw} that $\phi\in H^2(0,1)$
and $w \in H^1_r(0,1)$ provided that
$\hat \varrho_{\hat \eta ,\gl} \in H^1_r(0,1)$. 
The latter follows from Lemma \ref{lemma0}, which also yield the bounds
$$
\left\|\frac{\gl}{\ell(\gl)}\int_0^\infty \mu(s)\hat \xi_{\hat \eta ,\gl}(x,s) \,\d s \right\|_{H^1_r(0,1)}
\leq \bigg|\frac{\gl}{\ell(\gl)}\bigg| \sqrt{\kappa}\|\hat \xi_{\hat \eta ,\gl}\|_\M
\leq \bigg|\frac{\gl}{\ell(\gl)}\bigg| \frac{2 \sqrt{\kappa C}}{\delta+2\re\gl}\|\hat\eta\|_\M.
$$
In order to prove that $\eta\in D(T)$, we first
show that $\eta \in \M$. Since $\hat \xi_{\hat \eta ,\gl}\in \M$ by Lemma~\ref{lemma0}, we only need to show
that the map $s\mapsto(1- \e^{-\gl s}) w$ lies in $\M$. To this end, note that
$$
\int_0^\infty \mu(s) |1- \e^{-\gl s}|^2 \|w\|_{H^1_r(0,1)}^2 \,\d s \leq 2\kappa \|w\|_{H^1_r(0,1)}^2+
\|w\|_{H^1_r(0,1)}^2 \int_0^\infty \mu(s) \e^{-2 (\re \gl) s} \,\d s.
$$
Since $-2 \re \gl <\delta$, \eqref{condmu} implies that 
$\int_0^\infty \mu(s) \e^{-2 (\re \gl) s} \,\d s <\infty$,
and hence $\eta \in \M$. Thus
$\eta_s  = \hat \eta + w - \gl \eta \in \M$
as well.
Finally, by monotonicity of $\mu$, we have
$$
\|\eta(s)\|_{H^1_r(0,1)} \leq \frac{|1-\e^{-\gl s}|}{|\gl|}\|w\|_{H^1_r(0,1)}
+ \frac{\e^{-(\re \gl) s}}{\mu(s)} \bigg(\int_0^s \mu(\sigma)\,\d \sigma\bigg)^{1/2} \|\hat \eta\|_\M \to 0
$$
as $s\to 0$. Thus $\eta\in D(T)$.
It remains only to show that the coefficients $a(\gl), b(\gl)\in\CC$ may be chosen in such a way that the coupling conditions $v(0)=w(0)$ and  $u'(0) = \phi'(0)$ are satisfied. 
It is straightforward to show that these conditions are equivalent to the matrix equation
$$
\begin{pmatrix}
\ell(\gl)\gl \sinh\gl & \sinh \sqrt{\tfrac{\gl}{\ell(\gl)}}  \\
\ell(\gl)\gl  \cosh\gl & -\sqrt{\ell(\gl) \gl}\cosh\sqrt{\tfrac{\gl}{\ell(\gl)}}
\end{pmatrix}
\begin{pmatrix}
a(\gl) \\
b(\gl)
\end{pmatrix}=
\begin{pmatrix}
\hat f(\gl) \\
\hat g(\gl)
\end{pmatrix},
$$
where
$$
\hat f(\gl)=  \ell(\gl)\bigg(\hat u(0) + \gl U(\gl,0) - \frac{\hat \varrho_{\hat \eta ,\gl}(0)}{\gl}\bigg)
-\Phi(\gl,0),\quad\,\, \hat g(\gl) = \ell(\gl)(U_x(\gl,0) - \Phi_x(\gl,0)).
$$
Hence we may uniquely determine $a(\gl)$,  $b(\gl)$
if and only if the determinant of the matrix appearing on the left-hand side
is non-zero, which in turn is equivalent to  \eqref{claim}.
\end{proof}

%%%%%%%%%%%%%%%%%%%%%%%%%%%%%%%%%%%%%%%%%%%%%%%%%
\begin{Acknowledgments}
We thank Vittorino Pata for bringing to our attention the model studied in the present paper. 
\end{Acknowledgments}
%%%%%%%%%%%%%%%%%%%%%%%%%%%%%%%%%%%%%%%%%%%%%%%%%

\bibliographystyle{plain}

\end{document}